\theoremstyle{plain}
\newtheorem{theorem}{Theorem}
\newtheorem{ansatz}[theorem]{Ansatz}
\newtheorem{corollary}[theorem]{Corollary}
\newtheorem{definition}[theorem]{Definition}
\newtheorem{lemma}[theorem]{Lemma}
\newtheorem{problem}{Problem}
\newtheorem{proposition}[theorem]{Proposition}
\numberwithin{equation}{section}
\numberwithin{theorem}{section}
\newtheorem{theoremA}{Theorem}
\theoremstyle{definition}
\newtheorem{remark}[theorem]{Remark}
\newcommand{\vol}{\mathrm{vol}}
\renewcommand{\div}{\operatorname{div}}
\def\XXint#1#2#3{{\setbox0=\hbox{$#1{#2#3}{\int}$}
     \vcenter{\hbox{$#2#3$}}\kern-.5\wd0}}
\newcommand{\rr}{\mathbb{R}}
\renewcommand{\ss}{\mathbb{S}}
\newcommand{\nn}{\mathbb{N}}
\newcommand{\mm}{\mathbb{M}}
\newcommand{\sect}{\operatorname{Sect}}
\newcommand{\ric}{\operatorname{Ric}}
\renewcommand{\a}{\alpha}
\renewcommand{\b}{\beta}
\renewcommand{\d}{\delta}
\newcommand{\e}{\varepsilon}
\renewcommand{\l}{\lambda}
\newcommand{\s}{\sigma}
\newcommand{\vp}{\varphi}
\renewcommand{\O}{\Omega}
\newcommand{\CO}{\mathcal{O}}
\renewcommand{\S}{\mathscr{S}}
\newcommand{\SR}{\mathscr{S\!R}}
\newcommand{\re}{\textrm{e}}
\begin{document}


\title[Qualitative properties of subsolutions of nonlinear PDEs]{Qualitative properties of bounded subsolutions of nonlinear PDEs}
\author {Davide Bianchi}
\address{Dipartimento di Fisica e Matematica\\
Universit\`a dell'Insubria - Como\\
via Valleggio 11\\
I-22100 Como, ITALY}
\email{d.bianchi9@uninsubria.it}

\author{Stefano Pigola}
\address{Dipartimento di Fisica e Matematica\\
Universit\`a dell'Insubria - Como\\
via Valleggio 11\\
I-22100 Como, ITALY}
\email{stefano.pigola@uninsubria.it}

\author{Alberto G. Setti}
\address{Dipartimento di Fisica e Matematica
\\
Universit\`a dell'Insubria - Como\\
via Valleggio 11\\
I-22100 Como, ITALY}
\email{alberto.setti@uninsubria.it}

\begin{abstract}
 We study decay and compact support properties of positive and bounded solutions of $\Delta_{p} u \geq \Lambda(u)$ on the exterior of a compact set of a complete manifold with rotationally symmetry.  In the same setting, we also give a new characterization of stochastic completeness for the $p$-Laplacian in terms of a global $W^{1,p}$-regularity of such solutions. One of the tools we use is a nonlinear version of the Feller property which we investigate on general Riemannian manifolds and which we establish  under integral Ricci curvature conditions.
\end{abstract}

\subjclass[2010]{58J05, 31B35, 53C21}
\date{January 15, 2020}
\maketitle

\section{Introduction and main results}
This paper is devoted to the study of global properties of solutions at infinity of a certain class of  PDEs involving the $p$-Laplacian on a complete Riemannian manifold $(M,g)$.\smallskip

Recall that, for $1<p<+\infty$, the $p$-Laplacian is the nonlinear operator acting on $W^{1,p}_{\mathrm{loc}}$-functions as follows
\[
\Delta_{p}u = \div(|\nabla u|^{p-2}\nabla u),
\]
where the divergence has to be understood in the distributional sense. Obviously, when $p=2$, $\Delta = \Delta_{2}$ is the (negative definite) Laplace-Beltrami operator of $(M,g)$.

We are interested in differential equations of the form
\begin{equation}\label{intro-eq1}
\Delta_{p}u = \Lambda\left(  u\right)  ,\text{ on }M \setminus \bar \O,
\end{equation}
where $\Omega\Subset M$ is a domain and $\Lambda:[0,+\infty)\rightarrow
\lbrack0,+\infty)$ is a non-decreasing function satisfying the following structural conditions:
\begin{equation}\label{intro-eq2}
\text{(a) }\Lambda\left(  0\right)  =0\text{; (b) }\Lambda\left(  t\right)
>0\text{, }\forall t>0\text{; (c) }\liminf_{t\rightarrow0+}\frac
{\Lambda\left(  t\right)  }{t^{\xi}}>0,
\end{equation}
for some $0\leq\xi\leq p-1$. We say that $v$ is a {\it subsolution} of \eqref{intro-eq1} if $\Delta_{p} v \geq \Lambda(v)$, namely, the following inequality
\[
-\int_{M \setminus \bar \O} g(|\nabla v|^{p-2}\nabla v,\nabla \vp) \geq \int_{M \setminus \bar \O} \Lambda(v) \vp
\]
holds for every $0 \leq \vp \in C^{\infty}_{c}(M\setminus \bar \O)$. We also call $v$ a {\it supersolution} if the opposite inequality holds.\smallskip

The qualitative study of solutions of \eqref{intro-eq1} will be carried out on a special class of Riemannian manifolds whose metric has a rotational symmetry.  Given a smooth warping function $\s:[0,+\infty) \to [0,+\infty)$ satisfying
\[
(i)\, \s(t)>0, \,\, \forall t>0;\quad (ii)\, \s^{(2k)}(0) = 0,\,\, \forall k\in \nn; \quad (iii)\, \s'(0)=1,
\]
the {\it model manifold} $\mm^{m}_{\s}$ is defined as  Euclidean space $\rr^{m}$ endowed with the smooth, complete Riemannian metric that in polar coordinates is given by
\[
g = dr \otimes dr + \s^{2}(r) g_{\ss^{m-1}}.
\]
The origin $0 \in \rr^{m}$ is a pole of $\mm^{m}_{\s}$ in the sense that the corresponding exponential map is a global diffeomorphism. Moreover, the $r$-coordinate is nothing but the distance function from $0$.\smallskip

In this setting, one of our main contributions is the following decay property of positive and bounded subsolutions of \eqref{intro-eq1}.

\begin{theoremA}\label{th-main1}
Let $\mm^{m}_{\s}$ be a complete model manifold satisfying $\ric \geq -C$, for some constant $C \geq 0$. Let $0 \leq u\in C^{0}(\mm^{m}_{\s}\setminus \Omega)\cap W^{1,p}_{\mathrm{loc}}(\mm^{m}_{\s}\setminus \Omega) $ be a bounded subsolution of \eqref{intro-eq1} with $\Lambda$ satisfying \eqref{intro-eq2} for some $0 \leq \xi \leq p-1$. Then
\[
\lim_{x\rightarrow\infty}u\left(  x\right)  =0.
\]
\end{theoremA}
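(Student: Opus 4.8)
The plan is to dominate $u$ from above by a rotationally symmetric supersolution of \eqref{intro-eq1} that decays to zero at infinity, and then to transfer that decay to $u$ by a comparison argument. We may assume $M:=\sup_{\mm^{m}_{\s}\setminus\O}u>0$, otherwise there is nothing to prove. Fix $R_{0}>0$ so large that $\bar\O\subset B_{R_{0}}$, where $B_{R_{0}}$ is the geodesic ball centered at the pole. I would then look for a radial profile $w=w(r)$ on $[R_{0},+\infty)$ which is positive, decreasing to $0$, satisfies $w(R_{0})\ge M$, and is a supersolution, i.e. $\Delta_{p}w\le\Lambda(w)$. Recall that on a model the radial $p$-Laplacian is $\Delta_{p}w=\s^{1-m}\big(\s^{m-1}|w'|^{p-2}w'\big)'$, so that both the geometry and the structural hypotheses can be read off a single ODE.

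The natural candidate is the exponential barrier $w(r)=M\,\re^{-\alpha(r-R_{0})}$, for which a direct computation yields
\[
\Delta_{p}w=\alpha^{p-1}\,w^{p-1}\Big[(p-1)\alpha-(m-1)\tfrac{\s'}{\s}\Big].
\]
Here the hypothesis $\ric\ge-C$ enters in an essential way. It forces $\s''/\s\le C/(m-1)=:b^{2}$, whence $\eta:=\s'/\s$ satisfies the Riccati inequality $\eta'\le b^{2}-\eta^{2}$; since $\s>0$ on all of $(0,+\infty)$, the quantity $\eta$ cannot blow up to $-\infty$ at a finite radius, and a standard ODE comparison then gives the lower bound $\s'/\s\ge-b$ on $[R_{0},+\infty)$. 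Consequently $\Delta_{p}w\le\alpha^{p-1}\big[(p-1)\alpha+(m-1)b\big]\,w^{p-1}$. At this point I would invoke \eqref{intro-eq2}: because $0\le\xi\le p-1$, for $0<w\le t_{0}$ with $t_{0}\le1$ small one has $\Lambda(w)\ge c\,w^{\xi}\ge c\,w^{p-1}$, while for $t_{0}<w\le M$ one has $\Lambda(w)\ge\Lambda(t_{0})>0$ and $w^{p-1}\le M^{p-1}$. Choosing the decay rate $\alpha$ small enough that $\alpha^{p-1}\big[(p-1)\alpha+(m-1)b\big]\le\min\{c,\ \Lambda(t_{0})M^{1-p}\}$ makes $w$ a genuine supersolution of \eqref{intro-eq1} on all of $\mm^{m}_{\s}\setminus B_{R_{0}}$, with $w(R_{0})=M\ge u$ on $\partial B_{R_{0}}$ and $w(r)\to0$.

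It remains to deduce that $u\le w$ on $\mm^{m}_{\s}\setminus B_{R_{0}}$, and this I expect to be the main obstacle. On any bounded annulus the weak comparison principle for $\Delta_{p}-\Lambda(\cdot)$ is routine: testing the difference of the sub- and supersolution inequalities with $(u-w)_{+}$ and using the monotonicity of the vector field $\zeta\mapsto|\zeta|^{p-2}\zeta$ together with that of $\Lambda$ forces $(u-w)_{+}\equiv0$ as soon as $u\le w$ on the entire boundary. The trouble is that on the outer sphere $\partial B_{R}$ of an exhausting annulus one only knows $u\le M$, not $u\le w$, since $w$ has already decayed there; hence a naive exhaustion breaks down, and this failure is genuine precisely on manifolds where the maximum principle at infinity is lost. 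This is exactly where I would call on the nonlinear Feller property, which under $\ric\ge-C$ is available and plays the role of a comparison principle at infinity: it legitimizes the comparison between the bounded subsolution $u$ and the supersolution $w$ vanishing at infinity on the unbounded exterior domain $\mm^{m}_{\s}\setminus B_{R_{0}}$. Granting this, $u\le w$ outside $B_{R_{0}}$, and letting $r\to+\infty$ together with $u\ge0$ gives $\lim_{x\to\infty}u(x)=0$.
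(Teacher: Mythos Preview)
Your barrier construction is sound: the Riccati argument giving $\sigma'/\sigma \geq -b$ from the Ricci lower bound is correct (if $\eta=\sigma'/\sigma$ ever dropped below $-b$, the inequality $\eta'\le b^2-\eta^2$ would force blow-down in finite time, contradicting $\sigma>0$ on $(0,+\infty)$), and the resulting exponential supersolution $w$ is legitimate. You also correctly identify that the crux of the matter is the comparison $u\le w$ on the unbounded exterior domain.

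The gap is in the final step. The $p$-Feller property is \emph{not} a comparison principle at infinity: by definition it only asserts that the \emph{minimal} solution $h$ of the exterior problem $\Delta_p h=\lambda h^{p-1}$, $h=1$ on $\partial\Omega$, decays to zero. It says nothing about whether an arbitrary bounded subsolution is dominated by an arbitrary decaying supersolution. In the linear case $p=2$ such an $L^\infty$ comparison does follow from stochastic completeness, because $u-w$ is again a subsolution of the linear equation (Lemma~\ref{lemma-Linfty-comparison}); for $p\neq 2$ this fails, and the paper explicitly flags precisely this $L^\infty$ comparison as the unresolved point in general (see Ansatz~\ref{ansatz} and Problem~\ref{problem-lp}). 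So your sentence ``it legitimizes the comparison'' is exactly where the argument breaks.

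The paper's route is accordingly more indirect and uses the rotational symmetry in an essential way. One first reduces to $\Delta_p u\ge\lambda u^{p-1}$, then on each annulus $B_{R+n}\setminus B_R$ solves the Dirichlet problem with boundary value $\max_{\partial B_{R+n}}u$ on the outer sphere; the solutions are radial by the radialization argument of Theorem~\ref{lemma-symmetric} and dominate $u$ by the (bounded-domain) comparison principle. Passing to the limit yields a bounded radial solution $v\ge u$. The $p$-stochastic completeness of the model, read off the volume condition \eqref{pstoch-models}, forces any such bounded radial solution to be nonincreasing and hence in $L^{q}$ for all $q\ge p-1$ (Lemma~\ref{lemma-radialsolution}); this puts $u\in L^p$. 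Only now does the $W^{1,p}$ comparison of Proposition~\ref{prop-w1p-comparison} apply to bound $u$ by the minimal solution $h$, and the $p$-Feller property finally enters solely to assert $h\to 0$. Your exponential barrier could in principle replace $h$ at this last stage, but you still need the $L^p$ machinery---hence $p$-stochastic completeness and the radial symmetry of the model---to justify the global comparison.
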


The strategy to prove Theorem \ref{th-main1} is inspired by \cite{BPS-Spectral} and   is based on the following two ingredients of independent interest:
\begin{itemize}
 \item [a)] A suitable version for the $p$-Laplacian of the so called {\it Feller property} which, in the linear setting, was systematically studied in \cite{Az, PS-Feller};
 \item [b)] A global comparison theory for positive and bounded sub- supersolutions of \eqref{intro-eq1} with $\Lambda(t) = \lambda t^{p-1}$ that, for linear operators, was introduced in \cite{PS-Feller, BPS-Spectral}.
\end{itemize}
Some of the crucial arguments used in \cite{Az, PS-Feller, BPS-Spectral} rely either on the linearity of the (drifted) Laplacian or on heat kernel properties of the underlying manifolds related to the Feller property. Therefore,   to carry out our nonlinear program  new ideas have to be introduced from the very beginning of the theory. In particular, the $p$-Laplace version of the $L^{\infty}$-comparison alluded to in b) looks really challenging and, so far, we have been able to obtain it in the special setting of model manifolds by passing through the $L^{p}$-theory. This is precisely the point where rotational symmetry enters the game. On the other hand,  this investigation has led to the discovery of an interesting connection  between the {\it stochastic completeness} of the $p$-Laplacian and a global $W^{1,p}$-regularity property of subsolutions of \eqref{intro-eq1} which appears to be new even in the linear case. Recall that $(M,g)$ is called {\it stochastically complete} if  the Brownian motion trajectories on $M$  do not explode in finite time almost surely. It was proved by R. Azencott that this is equivalent to the fact that, for any $\l>0$, the only bounded solution $u \geq 0$ of the equation $\Delta u = \l  u$ is the trivial solution $u \equiv 0$.
The notion of stochastic completeness extends to the $p$-realm by using this Liouville-type viewpoint; see \cite{PRS-revista, MV-tams} and also the very recent \cite{MP} where, using the Harvey-Lawson approach, viscosity solutions of fully nonlinear operators are included in the picture. With this terminology, we have the following result.

\begin{theoremA}\label{th-main2}
Let $1<p<+\infty$ be fixed. The complete model manifold $\mm^{m}_{\s}$ is $p$-stochastically complete if and only if, given a  domain $\O \Subset \mm^{m}_{\s}$ and a real number $\l >0$, every solution $0 \leq u \in W^{1,p}_{\mathrm{loc}}(\mm^{m}_{\s}\setminus \bar \O) \cap C^{0}(\mm^{m}_{\s} \setminus \O)$ of the problem
\[
\begin{cases}
\Delta_{p} u \geq \l u^{p-1}  & \,\mm^{m}_{\s} \setminus \bar \O \\
\sup_{\mm^{m}_{\s}\setminus \O} u <+\infty
\end{cases}
\]
satisfies
\[
u \in W^{1,p}(\mm^{m}_{\s} \setminus   \bar \O_{\e}),
\]
where $\O_{\e}$ denotes any $\e$-neighborhood of $\O$.\\
Moreover, if $p=2$, the equivalence holds true with $\e= 0$ and without the rotational symmetry assumption on the manifold.
\end{theoremA}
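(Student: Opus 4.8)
The plan is to establish the two implications separately, after reducing the problem to a one–dimensional study of radial barriers by means of the comparison theory alluded to in ingredient b) of the Introduction. Throughout I fix $\l>0$ and, enlarging $\O$ if necessary, I may assume $\bar\O\subset B_{r_{0}}$ for some geodesic ball centered at the pole; since the conclusion only concerns $\mm^{m}_{\s}\setminus\bar\O_{\e}$, it suffices to treat the exterior of a ball. By homogeneity of the equation $\Delta_{p}u=\l u^{p-1}$ under the scaling $u\mapsto cu$, I normalize the relevant radial solutions to take the value $1$ on $\partial B_{r_{0}}$. Note that $\Lambda(t)=\l t^{p-1}$ satisfies \eqref{intro-eq2} with $\xi=p-1$, but since no curvature bound is assumed here, Theorem \ref{th-main1} is \emph{not} available and the decay information must be produced by hand.

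The heart of the matter is a radial dichotomy. For a radial $w=w(r)$ the equation reads $(\s^{m-1}|w'|^{p-2}w')'=\l\,\s^{m-1}w^{p-1}$, so that the flux $\psi:=\s^{m-1}|w'|^{p-2}w'$ is nondecreasing; multiplying by $w$ and integrating over $[r_{0},R]$ gives the energy identity
\[
\int_{r_{0}}^{R}\s^{m-1}|w'|^{p}\,dr+\l\int_{r_{0}}^{R}\s^{m-1}w^{p}\,dr=\big[w\,\psi\big]_{r_{0}}^{R}.
\]
Studying the bounded solution $w$ on $[r_{0},\infty)$ with $w(r_{0})=1$, I would prove the chain of equivalences
\[
w\in W^{1,p}\big(\mm^{m}_{\s}\setminus B_{r_{0}}\big)\ \Longleftrightarrow\ \lim_{r\to\infty}w(r)=0\ \Longleftrightarrow\ \mm^{m}_{\s}\ \text{is }p\text{-stochastically complete.}
\]
If $w$ decays then $\psi<0$ stays bounded and the right–hand side is finite, forcing both integrals to converge; conversely, since the volume factor makes $\int^{\infty}\s^{m-1}$ diverge, a nonzero limit is incompatible with finite energy, and a bounded solution that fails to decay is forced to have $\psi$ eventually positive and unbounded, hence infinite energy. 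The link with stochastic completeness is that the existence of a bounded, non–decaying radial solution of $\Delta_{p}w=\l w^{p-1}$ on $\mm^{m}_{\s}$ is exactly the failure of the Liouville property defining $p$-stochastic completeness; this is the step where the ODE integral test for $\s$ enters.

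With the dichotomy in hand I turn to the two directions. For ``$\Rightarrow$'' (completeness implies regularity), given an arbitrary bounded $u\ge0$ with $\Delta_{p}u\ge\l u^{p-1}$ on the exterior, the $L^{\infty}$-comparison of ingredient b) bounds $u\le S\cdot w$ on $\mm^{m}_{\s}\setminus B_{r_{0}}$, where $S=\sup u$ and $w$ is the decaying radial solution; in particular $u\to0$ at infinity. To upgrade to $\nabla u\in L^{p}$ I would run a Caccioppoli estimate: testing the differential inequality with $u\eta^{p}$ and using Young's inequality yields
\[
\tfrac12\int \eta^{p}|\nabla u|^{p}+\l\int \eta^{p}u^{p}\le C\int u^{p}|\nabla\eta|^{p},
\]
and then let $\eta\uparrow1$ along cutoffs adapted to the geodesic annuli; the decay of $u$ together with the finiteness of $\int^{\infty}\s^{m-1}w^{p}$ controls the right–hand side and gives $u\in W^{1,p}(\mm^{m}_{\s}\setminus\bar\O_{\e})$. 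The passage to $\bar\O_{\e}$ rather than $\bar\O$ is forced by the fact that $u$ is only assumed $C^{0}\cap W^{1,p}_{\mathrm{loc}}$ up to $\partial\O$, so the cutoffs must vanish there. For ``$\Leftarrow$'' I argue by contraposition: if $\mm^{m}_{\s}$ is $p$-stochastically incomplete, the radial dichotomy furnishes a bounded solution $w$ on the exterior with $w(r_{0})=1$ and $\inf w>0$; being bounded below on a set of infinite volume, $w\notin L^{p}\supset W^{1,p}$, so the regularity property fails. Finally, for $p=2$ the comparison and the Liouville characterization are classical (see \cite{Az,PS-Feller,BPS-Spectral}) and hold on arbitrary complete manifolds, while the distributional integration by parts is valid up to $\partial\O$; this is why one may take $\e=0$ and drop the rotational symmetry.

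I expect the principal obstacle to be the nonlinear $L^{\infty}$-comparison used to squeeze a general bounded $u$ between radial barriers, together with the sharpness of those barriers: one needs the decaying barrier to lie genuinely in $W^{1,p}$ in the complete case and, from the same ODE, a bounded non–decaying barrier in the incomplete case. Controlling the Caccioppoli right–hand side as $\eta\uparrow1$ in the absence of any curvature hypothesis is the other delicate point, and is precisely what confines the argument to model manifolds when $p\ne2$.
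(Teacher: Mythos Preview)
Your high-level plan is close to the paper's, and your radial dichotomy together with the Caccioppoli step are essentially what the paper proves in its Lemma~\ref{lemma-radialsolution} and Lemma~\ref{lemma-w1p-sub}. The genuine gap is precisely the one you flag in your last paragraph: the $L^{\infty}$ inequality $u\le S\cdot w$ with a decaying radial barrier. You invoke this as ``ingredient b)'' but give no mechanism, and the paper is explicit that for $p\neq 2$ no such comparison is available a priori; producing it is the entire content of the model-manifold argument. Concretely: if $w$ is the minimal radial solution (obtained as the limit of solutions $w_{n}$ vanishing on $\partial B_{R+n}$), then $u\le S\,w_{n}$ fails on $\partial B_{R+n}$ since $u$ can be positive there while $w_{n}=0$; and on the full unbounded exterior the only comparison principle at hand (the paper's Proposition~\ref{prop-w1p-comparison}) already requires $u\in W^{1,p}$, which is what you are trying to prove. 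So the step ``$u\le S\cdot w$'' is circular as written.

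The idea you are missing is to compare on \emph{bounded} annuli with a \emph{different} radial barrier whose outer boundary value matches $u$: solve $\Delta_{p}v_{n}=\l|v_{n}|^{p-2}v_{n}$ on $B_{R+n}\setminus\bar B_{R}$ with $v_{n}=1$ on $\partial B_{R}$ and $v_{n}=M_{n}:=\max_{\partial B_{R+n}}u$ on $\partial B_{R+n}$. Now ordinary comparison on the compact annulus gives $0\le u\le v_{n}\le\max(1,u^{\ast})$, and $v_{n}$ is radial because it is the energy minimizer in a rotationally symmetric problem. A subsequence converges in $C^{1}_{\mathrm{loc}}$ to a bounded radial solution $v$ with $u\le v$, and only at this point does your dichotomy enter: $p$-stochastic completeness forces $v'\le 0$ and $v\in L^{q}$ for all $q\ge p-1$, hence $u\in L^{p}$, and Caccioppoli finishes. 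For the converse direction your contraposition via an explicit non-$L^{p}$ radial barrier can be made to work (not $p$-stochastically complete does imply infinite volume), but the paper argues more directly: if $(\SR_{p})$ holds and $M$ were not $p$-stochastically complete, the nontrivial bounded global subsolution guaranteed by the Liouville characterization would lie in $W^{1,p}(M)$, and the $L^{p}$ Yau--Liouville theorem for $p$-subharmonic functions then forces it to vanish.
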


We end this introduction by noting that the decay properties established in the paper have interesting consequences as soon as the geometric conditions on the space are suitably strengthen. Among them, we point out the following result.

\begin{theoremA}\label{th-main3}
 Let $ \mm^{m}_{\s}$ be a complete model manifold whose sectional curvature satisfies $-\kappa^{2}\leq \sect \leq 0$, for some $\kappa \in \rr$. If $u \in C^{1}(\mm^{m}_{\s}\setminus \O)$ is a nonnegative and bounded subsolution of \eqref{intro-eq1} with $\Lambda$ satisfying \eqref{intro-eq2} for some $0 \leq \xi < p-1$, then $u$ has compact support.
\end{theoremA}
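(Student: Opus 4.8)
The plan is to combine the decay already provided by Theorem \ref{th-main1} with an explicit radial supersolution that vanishes outside a ball, whose existence hinges on the \emph{strict} inequality $\xi<p-1$. Since $\sect\ge-\kappa^{2}$ forces $\ric\ge-(m-1)\kappa^{2}=:-C$, Theorem \ref{th-main1} applies and gives $u(x)\to0$ as $x\to\infty$. Using condition (c) of \eqref{intro-eq2}, I would first fix $\d>0$ and a constant $c>0$ so small that $\Lambda(t)\ge c\,t^{\xi}$ for all $t\in(0,\d]$, and then invoke the decay to choose $R_{0}$ so large that $\bar\O\subset B_{R_{0}}$ and $u\le\d$ on $\{r\ge R_{0}\}$.

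Next I would construct the barrier. For a radial profile $w=w(r)$ one has $\Delta_{p}w=|w'|^{p-2}\big[(p-1)w''+(m-1)\tfrac{\s'}{\s}w'\big]$, and I try the compactly supported ansatz $w(r)=A\,(b-r)_{+}^{\a}$ with $\a=\dfrac{p}{p-1-\xi}$. The hypothesis $0\le\xi<p-1$ guarantees $1<\a<\infty$, so $w\in C^{1}$ with $w'(b^{-})=0$; extending $w$ by $0$ for $r\ge b$ yields a globally $C^{1}$ function whose flux $|w'|^{p-2}w'$ is continuous across $r=b$, hence a weak supersolution there. Writing $s=b-r$, a direct computation gives
\[
\Delta_{p}w=(A\a)^{p-1}(p-1)(\a-1)\,s^{\a\xi}-(A\a)^{p-1}(m-1)\tfrac{\s'}{\s}\,s^{\a\xi+1},
\]
the exponent $\a\xi$ being exactly the one forced by the choice of $\a$. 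Here the upper curvature bound enters decisively: $\sect\le0$ means $\s''\ge0$, whence $\s'\ge\s'(0)=1>0$ and $\s'/\s>0$, so the second term is nonpositive and may be discarded. It then suffices to absorb the first term into $\Lambda(w)\ge c\,w^{\xi}=c\,A^{\xi}s^{\a\xi}$, i.e. to arrange $(A\a)^{p-1}(p-1)(\a-1)\le c\,A^{\xi}$; since $p-1-\xi>0$ this holds once $A$ is small. Finally I set $b:=R_{0}+(\d/A)^{1/\a}$ so that $w(R_{0})=\d$. As $w$ is decreasing and $\le\d$ throughout $[R_{0},b]$, the bound $\Lambda(w)\ge c\,w^{\xi}$ holds along the whole profile, and $w$ is a supersolution of \eqref{intro-eq1} on $\mm^{m}_{\s}\setminus\bar B_{R_{0}}$ with $\supp w\subset\bar B_{b}$.

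The last step is a comparison on the noncompact exterior. I would fix $R>b$ and set $\e_{R}:=\sup_{\{r\ge R\}}u$, which is finite by boundedness and tends to $0$. Since $\Lambda$ is nondecreasing, $\Delta_{p}(w+\e_{R})=\Delta_{p}w\le\Lambda(w)\le\Lambda(w+\e_{R})$, so $w+\e_{R}$ is again a supersolution; moreover $u\le\d\le w+\e_{R}$ on $\{r=R_{0}\}$ and $u\le\e_{R}=w+\e_{R}$ on $\{r=R\}$ (as $w\equiv0$ there). Testing the two weak inequalities against $\vp=(u-w-\e_{R})_{+}\in W^{1,p}_{0}(B_{R}\setminus\bar B_{R_{0}})$, the monotonicity of the operator $V\mapsto|V|^{p-2}V$ together with $\Lambda(u)\ge\Lambda(w+\e_{R})$ on $\{\vp>0\}$ forces $\nabla\vp\equiv0$, hence $\vp\equiv0$ and $u\le w+\e_{R}$ on the annulus. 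Letting $R\to\infty$ yields $u\le w$ on $\{r\ge R_{0}\}$, so $u\equiv0$ for $r\ge b$ and $u$ has compact support.

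I expect the genuine difficulty to lie in the supersolution construction rather than in the comparison: one must produce a compactly supported barrier on the curved background, and it is precisely the sign $\s'\ge0$ coming from $\sect\le0$ that lets the pure power ansatz succeed with no competition from the radial drift term---without it the $(m-1)(\s'/\s)w'$ contribution could carry the wrong sign and obstruct the finite touchdown. The comparison itself is routine once phrased through the shifted supersolution $w+\e_{R}$, which is the device that tames the behaviour at infinity.
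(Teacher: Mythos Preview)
Your argument is correct. Both you and the paper begin identically, invoking Theorem~\ref{th-main1} to obtain $u(x)\to 0$, but then diverge: the paper deduces the compact support by quoting the compact support principle of Pucci--Rigoli--Serrin \cite[Theorem~1.1]{PuRiSe-jde}, after observing that $\Lambda(t)\ge Ct^{\xi}$ with $t^{-(\xi+1)/p}\in L^{1}(0+)$ and that $\Delta r=(m-1)\s'/\s$ is bounded below. You instead carry out a self-contained barrier construction with the explicit profile $w(r)=A(b-r)_{+}^{\a}$, $\a=p/(p-1-\xi)$, followed by a standard annular comparison via the shift $w+\e_{R}$.

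The trade-off is clear. Your route is entirely elementary and avoids the external reference, but leans on the strong sign condition $\s'/\s>0$ furnished by $\sect\le 0$ to kill the drift term outright. The paper, by passing through the more general Theorem~\ref{th-main3a}, shows that only $\inf_{[0,+\infty)}\s'/\s>-\infty$ together with $p$-stochastic completeness and $p$-Feller are actually needed; the cited result of \cite{PuRiSe-jde} absorbs a drift of either sign provided it is bounded. So your final remark that a negative $\s'/\s$ would ``obstruct the finite touchdown'' is slightly too pessimistic: with a more delicate choice of barrier (or simply by invoking \cite{PuRiSe-jde}) a bounded drift can still be handled, which is precisely the extra generality the paper records.
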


In recent years, the study of compact support properties of subsolutions of nonlinear PDEs along the ends of a complete Riemannian manifold has received a great impulse. For an up-to-date account and a comprehensive treatment on the so called {\it compact support principle} and its interplays with various forms of the maximum principles, we refer the reader to \cite{BMPR}. One of the interesting questions raised in that paper concerns the study of conditions under which a suitable nonlinear Feller property implies the compact support principle for solutions at infinity of $\Delta_{p} u \geq f(u)$ (possibly corrected with gradient terms) provided $F^{-1/p} \in L^{1}(0+)$ where $F(t) = \int_{0}^{t} f(s) ds$; see \cite[Problem 5]{BMPR}. Theorem \ref{th-main3} can be considered as a first step in this direction: as we shall see in Section \ref{section-compact}, it is a straightforward consequence of a more general result where the curvature conditions are replaced by the validity of $p$-versions of the stochastic completeness and the Feller property.
\\

The paper is organized as follows:\smallskip

\noindent - In Section \ref{section-pfeller} we extend to the $p$-Laplacian the usual notion of Feller property and we will prove that it holds assuming that the Ricci curvature is bounded below in a suitable integral sense. This appear to be new even in the case of the usual Feller property where $p=2$.  We also provide a necessary and sufficient condition for its validity on a model manifold.\smallskip

\noindent - In Section \ref{section-global-generic} we consider the minimal positive solution of $\Delta_{p} h = \l  h^{p-1}$ on an exterior domain of a generic complete manifold, we establish its (weighted) global Sobolev regularity and we partially prove how this regularity can be inherited by bounded subsolutions of \eqref{intro-eq1} via an $L^{p}$ comparison.\smallskip

\noindent - Section \ref{section-rotational} contains the proof of (more general versions of) Theorem \ref{th-main1} and Theorem \ref{th-main2}. More precisely, we focus on model manifolds that are stochastically complete for the $p$-Laplacian and, for any bounded subsolutions of \eqref{intro-eq1} on exterior domains, we prove a general comparison principle with the minimal solution $h$ alluded to above, and the companion global Sobolev regularity property of subsolutions.\smallskip

\noindent - In the final Section \ref{section-compact}, as an application of the theory developed in the previous sections, we provide a proof of (a generalized version of) Theorem \ref{th-main3}.


\section{The $p$-Feller property}\label{section-pfeller}


\subsection{Basic facts}
Let $(M,g)$ be any complete Riemannian manifold with Laplace-Beltrami operator $\Delta$. The heat semigroup $S(t) = e^{-t\Delta}$ is said to satisfy the {\it Feller property} if it preserves the space $C_{0}(M) = \{ u \in C^{0}(M): u(x) \to 0 \text{ as }x\to \infty\}$. It was proved by R. Azencott, \cite{Az}, that this is equivalent to requiring that, for any smooth domain $\O \Subset M$ and for any $\l>0$ the (necessarily unique) minimal solution of the problem
\[
\begin{cases}
 \Delta h = \l h & M \setminus \bar \O \\
 h=1 & \partial \O \\
 h \geq 0 & M \setminus \O
\end{cases}
\]
satisfies $h(x) \to 0$ as $x\to 0$.  We are therefore led to extend the notion of Feller property to the nonlinear setting as follows.
\begin{definition}
Fix $1<p<+\infty$. We say that a Riemannian manifold $(M,g)$ is $p$-Feller if  for any smooth domain $\O \Subset M$, and for any choice of the constant $\l>0$, the minimal  solution $h \in C^{1}(M\setminus \O)$ of the  exterior boundary value problem
\begin{equation} \label{p-feller}
\begin{cases}
\Delta_{p}h=\lambda h^{p-1} & \text{on } M\setminus \bar \O\\
h=1 & \text{on }\partial\O\\
h \geq 0 & M \setminus \O
\end{cases}
\end{equation}
satisfies $h(x) \to 0$ as $x \to \infty$.
\end{definition}
Obviously, the first problem we have to address is whether this definition is well posed. A rather complete answer is contained in the next
\begin{theorem}\label{th-basic}
 Let $(M,g)$ be a (possibly incomplete) Riemannian manifold and let $1<p<+\infty$ be fixed.
\begin{itemize}
 \item [(a)] For any smooth domain $\O \Subset M$ and for any $\l>0$ the minimal solution $h \in C^{1}(M\setminus \O)$ of \eqref{p-feller} exists and satisfies $0 < h <1$ on $M \setminus \bar \O$.
 \item [(b)] Let $\O \Subset M$ be a fixed smooth domain. Given $\l,\tilde \l >0$, let $h,\tilde h$ denote the minimal solutions of \eqref{p-feller} corresponding to these constants. Then $h(x) \to 0$ as $x \to \infty$ if and only if $\tilde h(x) \to 0$ as $x\to \infty$.
 \item [(c)] Let $\l >0$ be a fixed constant. Given smooth domains $\tilde \O, \O \Subset M$
 denote by $\tilde h, h$ the minimal solutions of the corresponding exterior problems \eqref{p-feller}. Then $h(x) \to 0$ as $x\to \infty$ if and only if  $\tilde h(x) \to 0$ as $x \to \infty$.
\end{itemize}
\end{theorem}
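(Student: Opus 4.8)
The plan is to build the minimal solution by an exhaustion-plus-comparison scheme and then exploit two structural features of $\Delta_{p}h=\l h^{p-1}$: its \emph{homogeneity} (if $h$ solves it, so does $ch$ for every $c>0$, since both sides are $(p-1)$-homogeneous) and the weak comparison principle for the monotone operator $u\mapsto-\Delta_{p}u+\l u^{p-1}$. I record the fact used repeatedly: the minimal solution is dominated by every nonnegative supersolution whose boundary value on $\partial\O$ is $\ge1$. For (a) I would fix a smooth exhaustion $\O\Subset\O_{1}\Subset\O_{2}\Subset\cdots$ with $\bigcup_{k}\O_{k}=M$ and solve, on each annulus $\O_{k}\setminus\bar\O$, the Dirichlet problem with data $1$ on $\partial\O$ and $0$ on $\partial\O_{k}$. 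Existence and uniqueness of $h_{k}$ follow by minimising the strictly convex, coercive energy $\int(\tfrac1p|\nabla u|^{p}+\tfrac{\l}{p}|u|^{p})$ over the corresponding affine Sobolev class (truncation shows the minimiser is nonnegative), and $h_{k}\in C^{1,\a}$ by the interior--boundary regularity theory for the $p$-Laplacian. Since the constants $0$ and $1$ are respectively a sub- and a supersolution, comparison gives $0\le h_{k}\le1$, and comparing $h_{k+1}$ with $h_{k}$ on $\O_{k}\setminus\bar\O$ gives $h_{k}\le h_{k+1}$. Hence $h_{k}\nearrow h$, and the locally uniform $C^{1,\a}$ estimates (valid because $0\le h_{k}\le1$) upgrade this to $C^{1}_{\mathrm{loc}}$ convergence, so $h\in C^{1}$ solves \eqref{p-feller} and is minimal by construction. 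The strict bounds $0<h<1$ come from the strong maximum principle: $h$ is $p$-subharmonic and equals $1$ on $\partial\O$, so a nonconstant $h$ cannot reach $1$ inside, while V\'azquez's strong maximum principle---whose integral hypothesis $\int_{0+}(s\cdot\l s^{p-1})^{-1/p}\,ds=+\infty$ holds here---forbids $h$ from vanishing.

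For (b) the easy implication uses monotonicity in $\l$: if $\tilde\l\ge\l$ then $\tilde h$ is a subsolution of the $\l$-equation, so comparison on each $\O_{k}\setminus\bar\O$ yields $\tilde h\le h$ and thus $h\to0\Rightarrow\tilde h\to0$. The converse is the crux, and I would handle it with a power-function supersolution. A direct computation gives, for $0<\theta<1$,
\[
\Delta_{p}(h^{\theta})=\theta^{p-1}\bigl[(\theta-1)(p-1)\,h^{(\theta-1)(p-1)-1}|\nabla h|^{p}+\l\,h^{\theta(p-1)}\bigr]\le \theta^{p-1}\l\,(h^{\theta})^{p-1},
\]
the gradient term being nonpositive precisely because $\theta<1$; hence $h^{\theta}$ is a supersolution of the equation with constant $\l'=\theta^{p-1}\l<\l$. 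By minimality the solution $h'$ for $\l'$ satisfies $h'\le h^{\theta}$, so $h\to0\Rightarrow h'\to0$. Given arbitrary $\l_{1}<\l_{2}$, the monotone comparison gives $h_{1}\to0\Rightarrow h_{2}\to0$, while the power trick with $\theta=(\l_{1}/\l_{2})^{1/(p-1)}$ gives $h_{2}\to0\Rightarrow h_{1}\to0$; together this is the asserted equivalence.

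For (c), reducing to the case $\bar\O\subset\tilde\O$ (intercalating a smooth $\hat\O\supset\bar\O\cup\bar{\tilde\O}$ handles the general case by transitivity), one implication is soft: restricting $h$ to the smaller exterior $M\setminus\bar{\tilde\O}$ and rescaling by $C=(\min_{\partial\tilde\O}h)^{-1}>0$ produces, by homogeneity, a solution $Ch$ with $Ch\ge1$ on $\partial\tilde\O$, whence minimality of $\tilde h$ forces $\tilde h\le Ch$ and $h\to0\Rightarrow\tilde h\to0$. For the reverse I would glue a supersolution on the larger exterior $M\setminus\bar\O$: set $w\equiv1$ on the annulus $\overline{\tilde\O}\setminus\O$ and $w=\tilde h$ on $M\setminus\bar{\tilde\O}$. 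This $w$ is continuous (both pieces equal $1$ on $\partial\tilde\O$), nonnegative, equals $1$ on $\partial\O$, tends to $0$ at infinity, and is a supersolution: the only delicate point is the interface $\partial\tilde\O$, where the jump of the normal derivative has the right sign because $\tilde h\le1=\tilde h|_{\partial\tilde\O}$ forces the outward (into $M\setminus\bar{\tilde\O}$) derivative $\partial_{\nu}\tilde h\le0$, so the weak formulation picks up a nonnegative boundary contribution. Minimality of $h$ then gives $h\le w$, and $w\to0$ yields $h\to0$. I expect the supersolution gluing across $\partial\tilde\O$ in (c) and the sign bookkeeping in the power computation of (b) to be the two main technical obstacles; everything else reduces to standard $p$-Laplacian comparison, regularity, and the homogeneity of the equation.
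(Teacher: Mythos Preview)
Your argument is correct and, for parts (a) and (b), essentially coincides with the paper's proof. In (a) both you and the paper build $h$ as a $C^{1}$-limit of solutions $h_{k}$ on an exhaustion, obtained by minimising the energy $E_{p,\l}$ between the sub/supersolution pair $0,1$; the paper extracts a subsequence via $C^{1,\a}$ compactness while you use the monotonicity $h_{k}\nearrow h$, but this is cosmetic. In (b) the paper uses the dual version of your power trick: with $\alpha=(\l/\tilde\l)^{1/(p-1)}>1$ one checks that $\tilde h_{n}^{\alpha}$ is a \emph{sub}solution of the $\l$-equation (the gradient term now has the opposite sign), compares it with $h_{n}$, and passes to the limit. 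Your choice $\theta<1$ producing a \emph{super}solution $h^{\theta}$ of the $\theta^{p-1}\l$-equation is the same computation read backwards; both yield $\tilde h^{\alpha}\le h$.

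The only genuine divergence is in the second implication of (c). For the direction where the exterior shrinks you and the paper use the same homogeneity-and-rescaling argument. For the reverse direction the paper avoids your interface analysis entirely: working with the approximants, it simply observes that on the common annulus $\O_{n}\setminus\bar\O$ one has $\tilde h_{n}\le 1=h_{n}$ on $\partial\O$ and $\tilde h_{n}=0=h_{n}$ on $\partial\O_{n}$, so the bounded-domain comparison gives $\tilde h_{n}\le h_{n}$ immediately, hence $\tilde h\le h$. This is shorter and sidesteps the need to verify the weak supersolution condition for your glued function $w$ across $\partial\tilde\O$ (which, while correct, requires the boundary flux computation you flag as a technical obstacle). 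Conversely, your gluing argument has the advantage of working directly with the limit objects and not needing the approximating sequences.
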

\begin{remark}
Traditionally, the proofs of  statements (b) and (c) for the Laplace operator rely on the semigroup formulation of the Feller property. Therefore, new arguments are needed when $p \not=2$.
\end{remark}
\begin{proof}
(a) Let $\{ \O_{n}\}$ be a smooth, relatively compact exhaustion of $M$ with $\O_{0} = \O$. We shall produce a corresponding sequence of solutions of the boundary value problem
\begin{equation}\label{h_n}
\left\{
\begin{array}
[c]{ll}
\Delta_{p}h_n=\lambda h_n\left\vert h_n\right\vert ^{p-2} & \text{on } \Omega_n \setminus \bar  \Omega\\
h_n=1 & \text{on }\partial\Omega\\
h_n=0 & \text{on }\partial\Omega_n.
\end{array}
\right.
\end{equation}
and prove that it converges nicely to the desired minimal solution  of \eqref{p-feller}.

In order to find a weak solution of \eqref{h_n}, the most natural way is to apply the sub/supersolution method within the direct calculus of variations. We thus consider the  functional $E_{p,\l} : \S_{n} \to \rr_{\geq 0}$
\[
E_{p,\l}(u)=\frac{1}{p}\int_{\Omega_n \setminus \bar {\Omega}} |\nabla u |^p +\frac{\lambda}{p} \int_{\Omega_n \setminus \bar {\Omega}} |u|^p
\]
which is coercive and weakly lower semicontinuous on the subset $\S_{n}$ of $W^{1,p}(\Omega_n \setminus \bar {\Omega})$ defined as follows:
\[
\S_{n} = \{ u \in W^{1,p}(\Omega_n \setminus \bar {\Omega}): 0\leq u \leq 1\text{ a.e. },\, u=1\text{ on }\partial \O,\, u=0 \text{ on } \partial \O_{n} \},
\]
where the boundary data are understood in  trace sense. Since the constant functions $\underline{u}\equiv 0,\bar{u}\equiv 1 \in \S$ are, respectively, a subsolution and a supersolution of \eqref{h_n} it follows from e.g. \cite[Theorem 2.4]{St-book} or \cite[Theorem 37]{Ha-lecturenotes} that  $E_{p,\l}$ attains its minimum $h_{n}$ in $\S_n$ and that $0 \leq h_{n} \leq 1$ is a weak solution of \eqref{h_n}\footnote{alternatively, one can use the Perron method to produce a continuous, hence $C^{1,\a}_{loc}(\bar \Omega_{n} \setminus \Omega)$, solution of the problem and, then, apply the comparison principle stated in Remark \ref{rmk-w1p-comparison}. In this way one deduces that the Perron solution is the minimizer of the functional $E_{p,\l}$ on the space $\{ u \in W^{1,p}(\Omega_{n}\setminus \bar \Omega): u=0 \text{ on }\partial \Omega_{n},u=1 \text{ on }\partial \Omega\}$, where the boundary data are understood in the trace sense.}.

Since $h_n \in W^{1,p} \cap L^{\infty}$, by elliptic regularity up to the boundary,  \cite{To,GaZi, Li-nonlinear}, $h_n \in C^{1,\alpha}(\bar \Omega_n \setminus \Omega)$, where $0<\a<1$ depends on the domain.

Summarizing, for every $n$ we have obtained a $C^{1,\alpha}(\bar \Omega_n \setminus \Omega)$ solution $0 \leq h_n \leq 1$ of the Dirichlet problem

\begin{equation}\label{h_n positive}
\left\{
\begin{array}
[c]{ll}
\Delta_{p}h_n=\lambda h_n^{p-1} & \text{on } \Omega_n \setminus \bar \Omega\\
h_n=1 & \text{on }\partial\Omega\\
h_n=0 & \text{on }\partial\Omega_n.
\end{array}
\right.
\end{equation}
Note that, by Schauder-type estimates up to the boundary due to Lieberman,  \cite{Li-nonlinear}, the $C^{1,\a}$-norm of the sequence $\{ h_{n} \}$ is bounded on compact subsets of $M \setminus \Omega$. Therefore, by using the compactness of the embedding $C^{1,\a}(\bar D)  \hookrightarrow C^{1,\b}(\bar D)$, for any $\a > \b$ and $D \Subset M$, together with a diagonal process, we get  that a subsequence $\{ h_{n'} \}$ converges, $C^{1}$-uniformly on compact subsets, to a solution $h \in C^{1}(M \setminus \Omega)$ of (\ref{p-feller}) satisfying $0< h \leq 1$.

The fact that $h$ is the minimal solution follows by comparing on $\O_{n} \setminus  \O$ a generic solution with $h_n$ and, then, letting $n \to +\infty$. Summarizing, $h$ is the minimal, positive solution of (\ref{p-feller}).\smallskip

(b) Let $\O \Subset M$ be a fixed smooth domain. Choose $ \l, \tilde  \l > 0$ and let $h, \tilde  h  \in C^{1}(M\setminus \O)$ be the minimal, positive solutions of \eqref{p-feller} corresponding to these constants. We assume that $h(x) \to 0$, as $x\to \infty$ and we show that this property is shared by $\tilde  h$.\smallskip

To this end, recall that $h = \lim_n h_n$ and $\tilde  h = \lim_n \tilde  h_n$ where $0 \leq h_n,\tilde  h_n \leq 1$ are solutions of the following problems:
\begin{equation}\label{lambdamu1}
\left\{
\begin{array}
[c]{ll}
\Delta_{p}h_{n}= \l h_{n}^{p-1} & \text{on }\Omega_{n}\setminus
\bar  {\Omega}\\
h_{n}=1 & \text{on } \partial \Omega\\
h_{n}=0 & \text{on } \partial \Omega_n.
\end{array}
\right.
\end{equation}
and,
\begin{equation}\label{lambdamu2}
\left\{
\begin{array}
[c]{ll}
\Delta_{p}\tilde  h_{n}=\tilde  \l \tilde  h_{n}^{p-1} & \text{on }\Omega_{n}\setminus
\bar{\Omega}\\
\tilde  h_{n}=1 & \text{on } \partial \Omega\\
\tilde  h_{n}=0 & \text{on } \partial \Omega_n,
\end{array}
\right.
\end{equation}
where $\O_{n} \nearrow M$ is a smooth, relatively compact exhaustion with $\O_{0} = \O$. Now we distinguish two cases:\smallskip

\noindent {\it Case 1:} Assume that $0<\lambda < \tilde  \l$. Then, from \eqref{lambdamu1} we see that $h_n$ is a supersolution of \eqref{lambdamu2}. By comparison,  $\tilde  h_n \leq h_n$ and taking the limit as $n \to \infty$ we conclude $\tilde  h \leq h$. It follows that $\tilde  h(x) \to 0$ as $x \to \infty$.\smallskip

\noindent {\it Case 2:} Suppose that $0 < \tilde  \l  < \l$. We define
 \[
 \alpha =\left(\frac{\l}{\tilde  \l}\right)^{1/(p-1)}>1
 \]
 and, for every $n$, we consider the function
 \[
 w_n = \tilde  h_n^{\alpha} \in C^{1}(M\setminus \O).
 \]
 A direct computation shows that $w_n$ is a subsolution of \eqref{lambdamu1}. Therefore, by comparison, $w_n \leq h_n$ and, letting $n \to \infty$, we conclude
 \[
 \tilde  h^\alpha \leq h.
 \]
It follows that $\tilde  h(x) \to 0$ as $x \to \infty$.
\smallskip

(c) It suffices to consider the case where $\tilde \O \Subset \O \Subset M$. Keeping the notation in the statement, we consider an exhaustion $\O_{n} \nearrow M$ satisfying $\O_{0} = \tilde \O$ and $\O_{1} = \O$ and, for $n \geq 2$, we let $0 \leq h_{n}, \tilde h_{n} \leq 1$ be the solutions of
\[
\left\{
\begin{array}
[c]{ll}
\Delta_{p}h_{n}= \l h_{n}^{p-1} & \text{on }\Omega_{n}\setminus
\bar {\Omega}\\
h_{n}=1 & \text{on } \partial \Omega\\
h_{n}=0 & \text{on } \partial \Omega_n.
\end{array}
\right.
\]
and
\[
\left\{
\begin{array}
[c]{ll}
\Delta_{p} \tilde h_{n}= \l \tilde h_{n}^{p-1} & \text{on }  \Omega_{n}\setminus
\bar {\tilde \Omega}\\
\tilde h_{n}=1 & \text{on } \partial \tilde \Omega\\
\tilde h_{n}=0 & \text{on } \partial \Omega_n.
\end{array}
\right.
\]
respectively. Since $\tilde h_{n} <1 = h_{n}$ on $\partial (\O_{n}\setminus \bar \Omega)$, by comparison we have
\[
\tilde h_{n} \leq h_{n},\quad \text{on } \O_{n} \setminus \O.
\]
Whence, by taking the limit as $n \to +\infty$ we deduce the validity of the pointwise estimate
\begin{equation}\label{upper}
\tilde h \leq h,\quad \text{on } M \setminus \O.
\end{equation}
On the other hand, for every $n \geq 2$, we define

\[
\tilde {\tilde h}_{n} = \frac{\tilde h_{n}}{\min_{\partial \O} \tilde h_{n}}
\]
and we observe that, by homogeneity,
\[
\left\{
\begin{array}
[c]{ll}
\Delta_{p} \tilde {\tilde h}_{n}= \l \tilde {\tilde h}_{n}^{p-1} & \text{on }  \Omega_{n}\setminus
\bar { \Omega}\\
\tilde {\tilde h}_{n} \geq 1 & \text{on } \partial \Omega\\
\tilde {\tilde h}_{n}=0 & \text{on } \partial \Omega_n.
\end{array}
\right.
\]
Then a comparison argument  shows that
\[
h_{n} \leq \tilde{\tilde h}_{n}\quad \text{on }\O_{n} \setminus \O
\]
i.e.
\[
\min_{\partial \O} \tilde h_{n} \cdot h_{n} \leq \tilde{h}_{n}\quad \text{on }\O_{n} \setminus \O.
\]
Since $\tilde {h}_{n}$ converges to $\tilde {h}$ uniformly on compact subsets of $M \setminus \tilde \O$, by taking the limit as $n \to +\infty$ we conclude the validity of the pointwise estimate
\begin{equation}\label{lower}
\min_{\partial \O} \tilde{h} \cdot h \leq \tilde{h}\quad \text{on }M \setminus \O.
\end{equation}
From \eqref{upper} and \eqref{lower} it follows that $h(x)$ decays to $0$ as $x\to \infty$ if and only if so does $\tilde h$.

The proof of the Theorem is completed.
\end{proof}


\subsection{A characterization on model manifolds}\label{section-feller-rotational}

It was proved in \cite[Theorem 3.4]{PS-Feller} that the validity of the Feller property on $\mm^{m}_{\s}$ is completely characterized by certain integral conditions satisfied by the warping function $\s$. These, in turn, can be read in terms of volume properties of the model. The following result extends this characterization to the $p$-Laplacian.

\begin{theorem}\label{th-pFeller-models}
Let $\mm^{m}_{\s}$  be a complete model manifold with warping function $\s$ and let $1<p<+\infty$ be fixed. Then $\mm^{m}_{\s}$ is $p$-Feller if and only if either
\begin{equation}\label{model-nonparab}
 \frac{1}{\s^{\frac{m-1}{p-1}}} \in L^{1}(+\infty)
\end{equation}
or
\begin{equation}\label{model-parab}
(i)\,  \frac{1}{\s^{\frac{m-1}{p-1}}} \not\in L^{1}(+\infty) \quad \text{and}  \quad
(ii)\, \left\{ \frac{\int_{r}^{+\infty}\s^{m-1}(t)dt}{ \s^{m-1}(r)} \right\}^{\frac{1}{p-1}} \not\in L^{1}(+\infty)
\end{equation}
\end{theorem}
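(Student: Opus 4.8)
The plan is to exploit the rotational symmetry to turn the exterior problem \eqref{p-feller} into a second order ODE and then to read off the decay of the minimal solution from an explicit first integral. By Theorem \ref{th-basic}(b)--(c) the $p$-Feller property depends neither on $\l>0$ nor on the smooth domain $\O$, so I am free to take $\O=B_{r_0}$ and $\O_n=B_{r_n}$ to be concentric geodesic balls centered at the pole. Since the balls and the boundary data in \eqref{h_n positive} are invariant under the isometric action of the rotation group fixing the pole, uniqueness of the Dirichlet problem (comparison) forces each approximating solution $h_n$ to be radial, and hence so is the minimal solution $h=h(r)=\lim_n h_n$, the convergence being monotone increasing by comparison. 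For a radial function the equation reads $(\s^{m-1}|h'|^{p-2}h')'=\l\,\s^{m-1}h^{p-1}$, so the ``flux'' $\Phi:=\s^{m-1}|h'|^{p-2}h'$ satisfies $\Phi'=\l\,\s^{m-1}h^{p-1}\ge 0$ and is nondecreasing. A short argument on each annulus $B_{r_n}\setminus\bar B_{r_0}$ (an interior minimum would, by monotonicity of the flux together with $h_n\ge 0$ and $h_n(r_n)=0$, produce a negative value) shows that $h_n$, hence $h$, is nonincreasing; consequently $h'\le 0$, $\Phi\le 0$, and $\ell:=\lim_{r\to\infty}h(r)$ exists in $[0,1)$. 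The whole point is to decide when $\ell=0$.

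First I treat the nonparabolic case \eqref{model-nonparab}, i.e. $\s^{-(m-1)/(p-1)}\in L^{1}(+\infty)$. Here the capacitary potential
\[
h_0(r)=\frac{\int_r^{+\infty}\s^{-(m-1)/(p-1)}(t)\,dt}{\int_{r_0}^{+\infty}\s^{-(m-1)/(p-1)}(t)\,dt}
\]
is a well defined, $p$-harmonic, strictly decreasing function with $h_0(r_0)=1$ and $h_0(+\infty)=0$. Since $\Delta_p h_0=0\le \l h_0^{p-1}$, it is a supersolution of \eqref{h_n positive} dominating $h_n$ on the boundary of each annulus; comparison gives $h_n\le h_0$, and letting $n\to\infty$ yields $h\le h_0\to 0$. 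Thus $\mm^m_\s$ is $p$-Feller whenever \eqref{model-nonparab} holds, and \eqref{model-parab} plays no role in this regime.

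It remains to analyse the parabolic case $\s^{-(m-1)/(p-1)}\notin L^{1}(+\infty)$, where everything hinges on the first integral. I claim that here $\Phi(+\infty)=0$: being nonpositive and nondecreasing, $\Phi$ has a limit $L\le 0$, and if $L<0$ then $|h'|\ge c\,\s^{-(m-1)/(p-1)}$ for large $r$ with $c>0$, so $\int^{+\infty}|h'|=+\infty$ by parabolicity, contradicting $\int_{r_0}^{+\infty}(-h')=h(r_0)-\ell\le 1$. Integrating $\Phi'$ from $r$ to $+\infty$ and using $\Phi(+\infty)=0$ gives
\[
-h'(r)=\l^{1/(p-1)}\left(\frac{\int_r^{+\infty}\s^{m-1}(t)h^{p-1}(t)\,dt}{\s^{m-1}(r)}\right)^{1/(p-1)},
\]
which I combine with the monotonicity of $h$. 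Writing $J(r)=\{\int_r^{+\infty}\s^{m-1}/\s^{m-1}(r)\}^{1/(p-1)}$ for the integrand in \eqref{model-parab}(ii), the bound $h(t)\le h(r)$ for $t\ge r$ yields $-h'/h\le \l^{1/(p-1)}J$; integrating the logarithmic derivative from $r_0$ gives $h(R)\ge \exp(-\l^{1/(p-1)}\int_{r_0}^R J)$, so if $J\in L^{1}(+\infty)$ (i.e. \eqref{model-parab}(ii) fails) then $\ell>0$ and the model is not $p$-Feller. Conversely, $h(t)\ge \ell$ gives $-h'\ge \l^{1/(p-1)}\ell\,J$, and integrating over $(r_0,+\infty)$ forces $\l^{1/(p-1)}\ell\int_{r_0}^{+\infty}J\le h(r_0)-\ell\le 1$, so if \eqref{model-parab}(ii) holds (the integral diverges) then $\ell=0$ and the model is $p$-Feller. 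This proves the equivalence in the parabolic regime and completes the characterization.

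The delicate points are concentrated in the parabolic case. The identification $\Phi(+\infty)=0$, which converts the differential equation into the integral identity above, is the real engine of the proof and is precisely where $p$-parabolicity enters; without it one controls $\Phi$ only up to an unknown nonpositive constant and the decay of $h$ cannot be decided. I also expect the reduction to the radial ODE to require some care: it rests on the uniqueness/comparison principle for the Dirichlet problem on the annuli (to symmetrize $h_n$) and on the monotone convergence $h_n\uparrow h$, and it is the only place where the rotational symmetry of $\mm^m_\s$ is genuinely needed. Finally one should check that $h'<0$ strictly on $(r_0,+\infty)$, so that the operator is nondegenerate and $h$ is smooth there; this follows from $\Phi'>0$ (as $h>0$), and legitimizes all the manipulations with $|h'|^{p-2}h'$.
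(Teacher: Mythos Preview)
Your proof is correct and follows the same overall scheme as the paper---reduce to a radial ODE and analyse the flux $\Phi=\s^{m-1}|h'|^{p-2}h'$---but differs in how radial symmetry is obtained. You argue that each $h_n$ is radial because the Dirichlet problem on the compact annulus has a unique solution (comparison, as in Remark~\ref{rmk-w1p-comparison}) and rotations produce another solution with the same boundary data. The paper (Theorem~\ref{lemma-symmetric}) instead exploits that $h_n$ is the \emph{energy minimizer} in $\S_n$ and shows, via Jensen's inequality applied to the Haar average over $SO(m-1)$, that radialization does not increase $E_{p,\l}$. Your route is slightly slicker here since uniqueness is immediate for $\l>0$; the paper's Jensen argument is more flexible in that it would survive for nonlinearities where uniqueness is less direct. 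For the actual ODE analysis in the parabolic and nonparabolic regimes the paper simply defers to \cite[Theorem~3.4]{PS-Feller}, saying those arguments carry over verbatim once $h$ is radial; you spell them out explicitly, and your derivation of the integral identity from $\Phi(+\infty)=0$ together with the two-sided bounds $-h'/h\le \l^{1/(p-1)}J$ and $-h'\ge \l^{1/(p-1)}\ell\,J$ is exactly in that spirit. One small point left implicit: in the parabolic case with $\s^{m-1}\notin L^1(+\infty)$ the quantity $J$ is identically $+\infty$, so condition~(ii) holds trivially and your inequality $\l^{1/(p-1)}\ell\int_{r_0}^{\infty}J\le 1$ (coming from the finiteness of $-\Phi(r_0)$) still forces $\ell=0$; this deserves a sentence.
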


\begin{remark}
 It is well known that \eqref{model-nonparab} is a necessary and sufficient condition for $\mm^{m}_{\s}$ to be {\it  $p$-hyperbolic}; see e.g. \cite[Corollary 5.2]{Tr}. This means that the manifold at hand supports a nonconstant, positive, $p$-superharmonic function. In the rotationally symmetric case, such a function is easily obtained by taking $u(r(x)) = \min \left(1,\int_{r(x)}^{+\infty}\s^{-\frac{m-1}{p-1}}(t) dt\right)$.
\end{remark}

Inspection of the proof of \cite[Theorem 3.4]{PS-Feller} shows that the only delicate issue is the radial symmetry of the minimal solution $h$ of the exterior problem \eqref{p-feller}. Granted this, the arguments used in \cite[Theorem 3.4]{PS-Feller} can be reproduced almost verbatim in the context of the $p$-Laplacian. Now, due to the nonlinearity of the $p$-Laplacian, it is no longer true, as in \cite[Theorem 3.2]{PS-Feller}, that the radialization of $h$ satisfies the same differential inequality. Thus, in order to insure that $h$  inherits the radial symmetry enjoyed by the metric, the domain, the differential operator and the boundary data, we have to follow a different way.

\begin{theorem}\label{lemma-symmetric}
Let $\mm^{m}_{\s}$ be a complete model manifold and, having fixed $R>0$, let $h\in C^{1}(\mm^{m}_{\s} \setminus B_{R}(o))$ be the minimal solution of the problem
\[
\begin{cases}
\Delta_{p}h=\lambda h^{p-1} & \text{on } M\setminus \bar B_{R}(o)\\
h=1 & \text{on }\partial B_{R}(o)\\
h \geq 0 & M \setminus B_{R}(o).
\end{cases}
\]
Then, $h(x)=h(r(x))$ is a radial function.
\end{theorem}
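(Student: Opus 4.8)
The plan is to exploit the fact that the minimal solution is \emph{uniquely} characterized by its minimality, together with the rotational invariance of every datum of the problem (metric, domain, operator and boundary values), and thereby to avoid any radialization (symmetrization) of $h$, which, as already observed, is obstructed by the nonlinearity of $\Delta_{p}$.

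Write $o\in\mm^{m}_{\s}$ for the pole, and let $\rho\colon\mm^{m}_{\s}\to\mm^{m}_{\s}$ be any isometry fixing $o$; in the polar coordinates centered at $o$ these are exactly the elements of $O(m)$ acting on the spherical factor, and they act transitively on each geodesic sphere $\{r=\mathrm{const}\}$. First I would check that $h\circ\rho$ is again a solution of the \emph{same} exterior problem. Since $\rho$ is an isometry it commutes with the $p$-Laplacian, so $\Delta_{p}(h\circ\rho)=(\Delta_{p}h)\circ\rho=\l\,(h\circ\rho)^{p-1}$ on $\mm^{m}_{\s}\setminus\bar B_{R}(o)$. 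Moreover $\rho$ preserves the domain $\mm^{m}_{\s}\setminus\bar B_{R}(o)$ and its boundary $\partial B_{R}(o)$, because $r\circ\rho=r$; it preserves the boundary datum $h\equiv1$ on $\partial B_{R}(o)$; and clearly $h\circ\rho\geq0$ with $h\circ\rho\in C^{1}(\mm^{m}_{\s}\setminus B_{R}(o))$. Hence $h\circ\rho$ meets all the requirements defining a solution of the exterior boundary value problem.

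Next I would invoke minimality. By Theorem \ref{th-basic}(a), $h$ is the smallest nonnegative solution, so $h\leq h\circ\rho$ on $\mm^{m}_{\s}\setminus B_{R}(o)$. Applying the same inequality to the isometry $\rho^{-1}$ gives $h\leq h\circ\rho^{-1}$, and composing with $\rho$ yields $h\circ\rho\leq h$. Combining the two bounds, $h=h\circ\rho$ for every isometry $\rho$ fixing $o$. Since these isometries act transitively on each geodesic sphere centered at $o$, the function $h$ is constant on every such sphere, i.e. $h(x)=h(r(x))$ is radial, as claimed.

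The only substantive point is the legitimacy of the minimality comparison, which ultimately rests on a weak comparison principle. An entirely equivalent route I could take is to argue directly at the level of the exhausting solutions $h_{n}$ of \eqref{h_n positive} on the annuli $B_{n}(o)\setminus\bar B_{R}(o)$: each annulus and its boundary data are $\rho$-invariant, so $h_{n}\circ\rho$ solves the same Dirichlet problem, and the weak comparison principle for $\Delta_{p}u=\l u^{p-1}$ (whose zeroth order term $t\mapsto\l t^{p-1}$ is nondecreasing; cf. Remark \ref{rmk-w1p-comparison}) forces uniqueness, whence $h_{n}\circ\rho=h_{n}$ and $h_{n}$ is radial; passing to the limit $h=\lim_{n}h_{n}$ preserves radial symmetry. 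I expect the genuinely delicate ingredient to be precisely this comparison/uniqueness statement for the monotone reaction term. Once it is available, the symmetry conclusion is purely formal, and — crucially — it bypasses the failure of the averaging argument caused by the nonlinearity of $\Delta_{p}$.
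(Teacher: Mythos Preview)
Your argument is correct and takes a genuinely different route from the paper. The paper works at the level of the approximating solutions $h_{n}$ on the annuli and exploits their \emph{energy-minimizing} characterization: it forms the radialization $\bar h_{n}(r)=\int_{SO(m)} h_{n}(r,A\cdot\theta)\,d\mu(A)$, uses Jensen's inequality (for the convex maps $t\mapsto t^{p}$) to show $E_{p,\l}(\bar h_{n})\leq E_{p,\l}(h_{n})$, and concludes $\bar h_{n}=h_{n}$ because $h_{n}$ is the unique minimizer. So the paper does carry out an averaging argument, but sidesteps the nonlinearity by descending to the energy functional rather than the Euler--Lagrange equation. Your approach instead never averages at all: it uses directly that each individual isometry $\rho$ sends solutions to solutions, and then invokes minimality (or, equivalently, the uniqueness on the annuli from Remark~\ref{rmk-w1p-comparison}) to force $h\circ\rho=h$. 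Your argument is shorter and more robust --- it would apply verbatim to any rotationally invariant problem admitting a comparison principle, without needing a variational structure --- while the paper's Jensen-based argument yields the slightly stronger intermediate statement that radialization does not increase the $(p,\l)$-energy.
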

\begin{proof}
 Take the exhaustion $\{\O_{n} := B_{R+n}(o)\} \nearrow \mm^{m}_{\s}$ and let $\{h_{n}\}$ be the corresponding sequence of solutions of \eqref{h_n positive} which, as $n \to +\infty$, converges to the minimal solution $h$. It is enough to show that each $h_{n}$ is radial. To this end, recall from the proof of Theorem \ref{th-basic} (a) that $h_{n}$ is energy minimizing in $\S_{n}$. Therefore, if we are able to prove that the radialization $\bar h_{n}$ of $h_{n}$ belongs to $\S_{n}$ and is energy non-increasing, we must conclude that $\bar h_{n} = h_{n}$, as desired.

 Note that the radialization of $h_{n}$ is defined by
 \[
 \bar h_{n}(r) =\int_{SO(m-1)} h(r,A\cdot \theta) \, \mathrm{d}\mu(A),
 \]
where $\mathrm{d}\mu$ is the normalized Haar measure on $SO(m-1)$. In particular, $0 \leq \bar h_{n} \leq 1$ and $\bar h_{n} = 1$ on $\partial B_{R}(o)$, $\bar h_{n}=0$ on $\partial B_{R+n}(o)$. Moreover, differentiating under the integral sign, we obtain
 \[
| \nabla  h_{n}(r) | \leq \int_{SO(m-1)} |\nabla h |(r,A \cdot \theta) \, \mathrm{d}\mu(A)
 \]
 Since $p \geq 1$, by Jensen's inequality,
 \[
 \bar h_{n}^{p}(r) \leq \int_{SO(m-1)} h^{p}(r,A\cdot \theta) \, \mathrm{d}\mu(A),\quad
 | \nabla  h_{n}(r) |^{p} \leq \int_{SO(m-1)} |\nabla h |^{p}(r,A \cdot \theta) \, \mathrm{d}\mu(A).
 \]
 Finally, integrating over $B_{R+n} \setminus B_{R}$ and using Fubini we conclude
 \[
 \| \bar h \|^{p}_{L^{p}(B_{R+n} \setminus B_{R})} \leq  \| h \|^{p}_{L^{p}(B_{R+n} \setminus B_{R})},\quad
 \| \nabla \bar h \|^{p}_{L^{p}(B_{R+n} \setminus B_{R})} \leq  \| \nabla h \|^{p}_{L^{p}(B_{R+n} \setminus B_{R})},
 \]
 thus proving that $\bar h_{n} \in \S_{n}$ satisfies
 \[
 E_{p,\l}(\bar h_{n}) \leq E_{p,\l}(h_{n}).
 \]
 This completes the proof of the Lemma.
\end{proof}

\subsection{Integral Curvature Conditions}

In this section we are going to consider a  noncompact complete $m$-dimensional Riemannian manifold $(M,g)$ satisfying the lower Ricci bound
\begin{equation}\label{eq:Ric}
\ric  \geq  (m-1)\kappa g
\end{equation}
in the sense of quadratic forms, where $\kappa \leq 0$. In fact we will treat the more general case where \eqref{eq:Ric} is satisfied in a suitable (locally uniform, scale invariant) integral sense, and prove that $M$ enjoys the $p$-Feller property for every $1<p<+\infty$; see Theorem \ref{thm:p-Feller}. It is worth pointing out that this result appears to be new even in the case of the linear Feller property where $p=2$.

\subsubsection{Statement of the result} We need to introduce some notation. Let $o\in M$ be a fixed reference point and let $x$ be a generic point in $M$. We will denote by  $r(x,y)$ the Riemannian distance between two points $x,y \in M$ and when $y=o$ we will just write $r(x)$. Metric balls in $M$ of radius $R$ centered at $x$  will be denoted by $B_R(x)$. We will preferably use capital letters for constants, possibly with indices. Whenever it will be necessary we will highlight the parameter dependency, but, unless otherwise stated, constants will be independent of the radius $R$ of the ball $B_R (o)$.
\smallskip
For  $\kappa \leq 0$ we set
\[
\rho_\kappa(x)=[\min(Ric)(x) - (m-1)\kappa]_-,
\]
where $\min(Ric(x))$ denotes the smallest eigenvalue of the Ricci tensor at $x$ and $a_-=(|a|-a)/2$ is the negative part of $a$.  Note that $\rho_\kappa(x)= 0$ if and only if \eqref{eq:Ric} holds at $x$.

We also denote by $v_\kappa(r)$ the volume of the ball of radius $r$ in the $m$-dimensional space form  $\mathbb{M}_\sigma$ of constant curvature $\kappa$,  so that
\[
v_\kappa(r) = c_m \int_0^r \sigma^{m-1}(t)dt, \, \text{ with } \,
\sigma(t)=\begin{cases}
t&\text{if } \kappa =0,\\
\frac 1{\sqrt{-\kappa }}\sinh(\sqrt{-\kappa} t) &\text{if } \kappa <0.
\end{cases}
\]
For $R>0$, let also
\[
\bar k_{q,\kappa}(x,R)=
\left(
\frac 1{\vol B_R(x)}\int_{B_R(x)}\rho_\kappa (x)^q dx
\right)^{\frac 1q}\]
and
\[
\bar{k}_{q,\kappa}(R)= \sup_{x\in M} \bar{k}_{q,\kappa}(x,R).
\]
We are now in the position to state the main result of the Section.
\begin{theorem}\label{thm:p-Feller}
 Let $(M,g)$ be a complete Riemannian manifold of dimension $\dim M = m$. Given $q>m/2$ and $\kappa \leq 0$ there exists a constant $\e_{0}=\e_{0}(q,k,m)>0$ such that if
\begin{equation}\label{k-bound}
 R^{2} \bar{k}_{q,\kappa}(R) \leq \e_{0}
\end{equation}
holds for some $0 < R \leq 1$, then $M$ is $p$-Feller for every $1<p<+\infty$.
\end{theorem}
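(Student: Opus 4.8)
The plan is to deduce the decay of the minimal solution $h$ from a \emph{uniform one-ball contraction} which is then telescoped along the distance function. The starting point is the flexibility granted by Theorem~\ref{th-basic}: since by (b)--(c) the $p$-Feller property depends neither on the exterior domain nor on the constant $\l$, I would fix $\O=B_{R_{0}}(o)$ and, crucially, keep $\l>0$ at my disposal to be chosen \emph{large} at the end. Recall from Theorem~\ref{th-basic}(a) that $h=\lim_{n}h_{n}$ with $0\le h_{n}\le 1$ solving \eqref{h_n positive} on $B_{R_{0}+n}(o)\setminus\bar\O$. Because $\Delta_{p}h_{n}=\l h_{n}^{p-1}\ge 0$, each $h_{n}$ is $p$-subharmonic, so the maximum principle identifies $\mu_{n}(\rho):=\sup_{\{r\ge\rho\}}h_{n}=\sup_{\partial B_{\rho}}h_{n}$ for $\rho\ge R_{0}$; the goal is a bound $\mu_{n}(\rho)\le\theta^{\,(\rho-R_{0})/R}$ with $\theta<1$ independent of $n$, which upon letting $n\to\infty$ forces $h(x)\to 0$.

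The analytic engine is the scale-invariant smallness \eqref{k-bound}. For $q>m/2$, $\kappa\le 0$ and $0<R\le 1$ it places us exactly in the setting of the integral-Ricci theory of Petersen--Wei and Dai--Wei--Zhang (together with Gallot's volume estimates): one obtains, with constants depending only on $m,q,\kappa,\e_{0}$ and \emph{uniform in the center} $x$, a local volume doubling property and a local Sobolev inequality on the balls $B_{R}(x)$. Feeding the uniform Sobolev inequality into a Moser iteration produces the key tool, a center-independent local maximum estimate for nonnegative $p$-subsolutions, $\sup_{B_{R/4}(x)}v\le C_{\mathrm{MV}}\bigl(\avint_{B_{R/2}(x)}v^{p}\bigr)^{1/p}$.

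The contraction itself requires no curvature beyond this. Fix $x$ with $B_{R}(x)\subset B_{R_{0}+n}(o)\setminus\bar\O$ and put $s=\sup_{B_{R}(x)}h_{n}>0$. By the $p$-homogeneity of the equation the rescaling $v=h_{n}/s$ again solves $\Delta_{p}v=\l v^{p-1}$ and satisfies $0\le v\le 1$ on all of $B_{R}(x)$. Testing the equation with $\eta^{p}v$, where $\eta$ is a standard cutoff between $B_{R/2}(x)$ and $B_{R}(x)$, yields the Caccioppoli estimate $\l\int_{B_{R/2}(x)}v^{p}\le C R^{-p}\int_{B_{R}(x)}v^{p}\le C R^{-p}\vol B_{R}(x)$, where the absorption term is precisely what carries the factor $\l$. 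Dividing by $\vol B_{R/2}(x)$ and using doubling gives $\avint_{B_{R/2}(x)}v^{p}\le C'(\l R^{p})^{-1}$, so the uniform mean-value estimate returns $\sup_{B_{R/4}(x)}v\le C''(\l R^{p})^{-1/p}=:\theta$. Choosing $\l$ large (which is legitimate by Theorem~\ref{th-basic}(b)) makes $\theta<1$, uniformly in $x$ and $n$; rescaling back, $\sup_{B_{R/4}(x)}h_{n}\le\theta\,\sup_{B_{R}(x)}h_{n}$.

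To conclude I would telescope this. For $r(x)=\rho$ one has $B_{R}(x)\subset\{r\ge\rho-R\}$, so the contraction and the maximum principle give $h_{n}(x)\le\theta\,\sup_{\{r\ge\rho-R\}}h_{n}=\theta\,\mu_{n}(\rho-R)$, whence $\mu_{n}(\rho)\le\theta\,\mu_{n}(\rho-R)$. Iterating in steps of $R$ down to the innermost band, where $\mu_{n}\le\mu_{n}(R_{0})=1$, produces $\mu_{n}(\rho)\le\theta^{\lfloor(\rho-R_{0})/R\rfloor}$ uniformly in $n$; passing to the limit yields $h(x)\le\theta^{\lfloor(r(x)-R_{0})/R\rfloor}\to 0$, that is, $M$ is $p$-Feller. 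I expect the main obstacle to be the second paragraph: proving the uniform, center-independent mean-value estimate for $p$-subsolutions directly from the integral Ricci bound \eqref{k-bound}. This is the genuinely new point for $p\ne 2$, since one cannot rely on the heat-kernel and semigroup arguments of \cite{Az,PS-Feller}; it requires adapting the De Giorgi--Nash--Moser machinery to the quasilinear operator under only an \emph{integral} (hence non-pointwise) curvature control, where the classical Laplacian comparison theorem is unavailable and must be replaced by the uniform Sobolev and volume-doubling inequalities supplied by the Petersen--Wei-type estimates.
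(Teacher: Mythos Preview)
Your argument is correct and takes a genuinely different route from the paper's. Both proofs share the same analytic input extracted from the integral Ricci bound \eqref{k-bound}: uniform local doubling (Corollary~\ref{Cor LVD}), a uniform local Sobolev inequality (Proposition~\ref{Prop_LocSobolev}), and the resulting mean--value inequality for nonnegative $p$-subharmonic functions (Proposition~\ref{thm:meanvalue}); and both exploit Theorem~\ref{th-basic}(b) to choose $\l$ large. The divergence is in how these ingredients are assembled. The paper first proves a \emph{global} weighted estimate, Lemma~\ref{lemma-W1p-minimal}, namely $\int \re^{Cr}h^{p}<\infty$ for $C<(\l p)^{1/p}$, obtained by integrating by parts against the approximants $h_{n}$; it then couples this with the mean--value inequality and the exponential \emph{lower} volume bound of Proposition~\ref{prp:volume_growth} to deduce $h(x)\lesssim \re^{-cr(x)}$. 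Your approach is instead purely local and iterative: the Caccioppoli identity for the equation $\Delta_{p}v=\l v^{p-1}$ (not merely the inequality) converts largeness of $\l$ into smallness of $\avint_{B_{R/2}}v^{p}$, the mean--value inequality upgrades this to a one--ball contraction $\sup_{B_{R/4}}h_{n}\le\theta\sup_{B_{R}}h_{n}$ with $\theta<1$, and telescoping along radial shells produces the exponential decay. Your route bypasses Lemma~\ref{lemma-W1p-minimal} entirely and uses only doubling rather than the volume lower bound; the paper's route, on the other hand, isolates the weighted $W^{1,p}$-regularity of the minimal solution as a statement of independent interest that reappears elsewhere in the paper. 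The obstacle you flag in your last paragraph---a uniform mean--value inequality for $p$-subsolutions under integral Ricci control---is precisely what the paper addresses by combining Proposition~\ref{Prop_LocSobolev} with \cite[Proposition~1.2]{RSV-manuscripta}, so it is not a gap but the place where you would invoke the same external references.
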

Conditions of the kind of \eqref{k-bound} was considered in several works, e.g.,  \cite{PW-GAFA, PW-TAMS, DWZ-Advances}, where the emphasis was put on the case where $\kappa=0$. Since we have not found explicit references to the results we are going to use in the case $\kappa<0$, for the convenience of the reader we will provide detailed proofs.

\subsubsection{Ingredients of the proof}
The strategy of the proof is inspired by a celebrated construction of an exhaustion function with bounded gradient and Laplacian by R. Schoen and S.T. Yau, \cite[Theorem 4.2]{SY}, recently extended to more general Ricci bounds in \cite[Theorem 2.1]{BS}. It is based on the following ingredients:
\begin{itemize}
 \item [(a)] A weighted $L^{p}$-integrability condition on the minimal solution $h$ of \eqref{p-feller}.
 \item [(b)] A lower estimate for volumes of balls of fixed radius.
\item [(c)] A local $L^{p}$ mean value inequality for $p$-subharmonic functions.
\end{itemize}
We shall see in Lemma \ref{lemma-W1p-minimal} of Section \ref{section-global-generic} that (a) holds with an exponential weight provided the constant $\l>0$ in \eqref{p-feller} is large enough.\smallskip

Concerning (b), we have the following result, which follows immediately from Proposition  \ref{Prop: LVD+VolLowerEstimate} below.

\begin{proposition}\label{prp:volume_growth}
Let $M$ be a noncompact complete manifold and let $r(x)=d(o,x)$ be the Riemannian distance from a fixed origin $o$.

Given $q>m/2$ and $\kappa \leq 0$ there exists a  constant $\epsilon_0=\epsilon_0(q, \kappa, m)>0$
such that if \eqref{k-bound} holds for some $0< R\leq 1$, then for every  $0<r\leq R/2$ there are constants $C,D>0$ depending on $q$, $\kappa,$ $m$, $o$ and $r$ such that for all $x\in M$
\begin{equation}
\label{vol-lb}
\vol B_r(x)\geq C \vol B_{r}(o) e^{-Dr(x)}.
\end{equation}
\end{proposition}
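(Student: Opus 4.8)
The plan is to reduce the estimate \eqref{vol-lb} to a uniform \emph{local volume doubling} property at scales up to $R$, and then to propagate it from the origin $o$ to a far away point $x$ by a chaining argument along a minimizing geodesic. The smallness hypothesis \eqref{k-bound} enters only through the doubling constant; the passage to the exponential lower bound is purely metric.

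\emph{Local volume doubling.} First I would extract from the integral curvature theory the following fact: there is $\e_{0}=\e_{0}(q,\kappa,m)>0$ such that, whenever \eqref{k-bound} holds for some $0<R\leq 1$, one has a uniform comparison
\[
\vol B_{2s}(x)\leq C_{D}\,\vol B_{s}(x),\qquad 0<s\leq R/2,\quad \forall\,x\in M,
\]
with $C_{D}=C_{D}(q,\kappa,m)$ independent of $x$ (the residual dependence on $R$ being absorbed using $0<R\leq 1$ and $\kappa\leq 0$). This is the Petersen--Wei mechanism: the smallness of $\bar k_{q,\kappa}(R)$ forces a near-monotonicity of the normalized volume ratio $s\mapsto \vol B_{s}(x)/v_{\kappa}(s)$ on $(0,R]$, of the schematic form
\[
\left(\frac{\vol B_{s_{2}}(x)}{v_{\kappa}(s_{2})}\right)^{\frac{1}{2q}}\leq \left(\frac{\vol B_{s_{1}}(x)}{v_{\kappa}(s_{1})}\right)^{\frac{1}{2q}}+C(q,\kappa,m)\,\bigl(R^{2}\,\bar k_{q,\kappa}(R)\bigr)^{\frac12},\qquad 0<s_{1}\leq s_{2}\leq R,
\]
from which the doubling inequality is deduced by choosing $\e_{0}$ small enough that the error term is a controlled fraction of the main term. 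I expect this to be the main obstacle, both because it requires the full integral comparison machinery and because, as the authors remark, the quantitative dependence on the case $\kappa<0$ is not readily available in the literature; it is precisely what Proposition~\ref{Prop: LVD+VolLowerEstimate} is designed to supply. Since I may not invoke that proposition here, I would reconstruct the doubling directly from the Petersen--Wei relative volume comparison adapted to $\kappa\leq 0$.

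\emph{Chaining.} Granting the doubling, the estimate follows at once. Fix $0<r\leq R/2$ and $x\in M$, set $L:=r(x)$, and let $\gamma\colon[0,L]\to M$ be a unit-speed minimizing geodesic from $o$ to $x$. Put $N:=\lceil L/r\rceil$ and $x_{i}:=\gamma(\min(ir,L))$ for $0\leq i\leq N$, so that $x_{0}=o$, $x_{N}=x$ and $d(x_{i},x_{i+1})\leq r$. The triangle inequality gives the inclusion $B_{r}(x_{i})\subseteq B_{2r}(x_{i+1})$, and since $2r\leq R$ the doubling property yields
\[
\vol B_{r}(x_{i})\leq \vol B_{2r}(x_{i+1})\leq C_{D}\,\vol B_{r}(x_{i+1}),
\]
that is $\vol B_{r}(x_{i+1})\geq C_{D}^{-1}\,\vol B_{r}(x_{i})$. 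Iterating these $N$ inequalities from $i=0$ to $i=N-1$ gives $\vol B_{r}(x)\geq C_{D}^{-N}\,\vol B_{r}(o)$, and using $N\leq r(x)/r+1$ we obtain
\[
\vol B_{r}(x)\geq C_{D}^{-1}\, e^{-\frac{\log C_{D}}{r}\,r(x)}\;\vol B_{r}(o),
\]
which is exactly \eqref{vol-lb} with $C=C_{D}^{-1}$ and $D=(\log C_{D})/r$, depending only on $q,\kappa,m$ and $r$ (the dependence on $o$ being carried solely by the explicit factor $\vol B_{r}(o)$). This completes the plan.
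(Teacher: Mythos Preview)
Your proposal is correct and follows essentially the same route as the paper. The paper obtains the uniform local doubling via the Petersen--Wei comparison (Lemma~\ref{Lemma LVD} and Corollary~\ref{Cor LVD}) exactly as you outline, and then derives the volume lower bound from Proposition~\ref{Prop: LVD+VolLowerEstimate}, which in turn is just an invocation of \cite[Lemma~5.2.7]{Sa-Aspects}; that lemma is precisely the geodesic chaining argument you have written out explicitly, so you have merely unpacked what the paper cites.
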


As for (c) we have the following result which is just  \cite[Proposition 1.2]{RSV-manuscripta} once it is observed that, by Proposition  \ref{Prop_LocSobolev}, the required Sobolev inequality holds for $1<p<+\infty$.

\begin{proposition}\label{thm:meanvalue}
Let $(M,g)$ be a  complete Riemannian manifold of dimension $\dim M =m$. Assume that  \eqref{k-bound} is satisfied with $\kappa\leq 0$, $q > m/2$ and some $0<R \leq 1$. Then, having fixed $1< p < +\infty$, there exist absolute constants $\a,\b>0$ and $0 < R_{1} \leq R$, depending on $\kappa$, $m$, $q$ and $p$, such that the mean value inequality
\[
u^{p}(x) \leq  \frac{\a}{(\vol B_{R_{1}}(x))^{\b}} \int_{B_{R_{1}}(x)} u ^{p}
\]
holds for every function $0\leq u \in W^{1,p}_{loc}(B_{2R_{1}}(x))\cap C^{0}(B_{2R_{1}}(x))$ satisfying $\Delta_{p} u \geq 0$.
\end{proposition}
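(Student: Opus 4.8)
The plan is to obtain the mean value inequality by a Moser iteration scheme, exactly in the form of \cite[Proposition 1.2]{RSV-manuscripta}. That result asserts local boundedness, with an explicit $L^{p}$-to-$L^{\infty}$ estimate on balls, for every nonnegative weak subsolution of a quasilinear equation of $p$-Laplacian type --- in particular for every $0\leq u$ with $\Delta_{p}u\geq 0$ --- \emph{provided} one has at disposal a local Sobolev inequality of the form
\[
\left( \frac{1}{\vol B_{\rho}(x)} \int_{B_{\rho}(x)} |f|^{p\nu} \right)^{1/\nu} \leq C_{S}\, \rho^{p}\, \frac{1}{\vol B_{\rho}(x)} \int_{B_{\rho}(x)} \left( |\nabla f|^{p} + \rho^{-p}|f|^{p} \right),
\]
valid for all $f\in W^{1,p}_{0}(B_{\rho}(x))$, with some exponent $\nu>1$ and a constant $C_{S}$ independent of the center $x$ and of the radius $\rho\leq R_{1}$. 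The power $\beta$ in the statement is then dictated by $\nu$ (the Sobolev dimension) together with the scale-invariant volume factors produced by the iteration, while $\alpha$ absorbs $C_{S}$ and the geometric series coming from the nested balls. Thus the whole matter reduces to producing such a Sobolev inequality, uniformly in the center, on a definite scale $R_{1}\leq R$.

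To supply this ingredient under the hypothesis \eqref{k-bound}, I would invoke Proposition \ref{Prop_LocSobolev}, whose role is precisely to furnish the displayed Sobolev inequality for every $1<p<+\infty$. The mechanism behind it is the classical circle of ideas of Saloff-Coste: the integral lower Ricci bound written in the scale-invariant form \eqref{k-bound} yields, via Petersen--Wei type volume comparison (compare the local estimate of Proposition \ref{prp:volume_growth}), a local volume doubling property on balls of radius up to $R_{1}$, together with a local $(1,1)$- (equivalently $(1,p)$-) Poincar\'e inequality; doubling plus a Poincar\'e inequality are known to be equivalent to a family of Sobolev inequalities whose constants are controlled solely by the doubling and Poincar\'e constants, hence uniform in $x$. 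The restriction to $0<R_{1}\leq R$ is exactly what forces all these constants to depend only on $\kappa,m,q$ and not on the base point.

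Granted Proposition \ref{Prop_LocSobolev}, the Moser iteration of \cite[Proposition 1.2]{RSV-manuscripta} then runs verbatim: starting from the subsolution inequality $\int |\nabla u|^{p-2}\langle \nabla u,\nabla \varphi\rangle \leq 0$ for nonnegative test functions, one tests with powers $u^{s}\eta^{p}$ (cutoffs $\eta$ between nested balls), applies the Sobolev inequality to improve integrability from exponent $ps$ to $p\nu s$, and iterates $s\to \nu s\to \nu^{2}s\to\cdots$ to reach $L^{\infty}$; tracking the volume factors delivers the asserted inequality with the claimed $\alpha,\beta,R_{1}$. The genuinely substantive step is the second one, namely Proposition \ref{Prop_LocSobolev} itself: establishing the uniform local Sobolev inequality in the integral-curvature regime --- in particular for $\kappa<0$, where explicit references are scarce --- requires the volume comparison and the local Poincar\'e inequality to be set up with constants genuinely independent of the center, and this is the crux of the whole Section. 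Once that is in place, the mean value inequality is essentially a black-box application of the nonlinear De Giorgi--Nash--Moser theory.
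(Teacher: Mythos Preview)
Your proposal is correct and matches the paper's own treatment: the proposition is stated there as an immediate consequence of \cite[Proposition 1.2]{RSV-manuscripta} once the uniform local Sobolev inequality of Proposition \ref{Prop_LocSobolev} is available. The only minor divergence is in your sketch of how Proposition \ref{Prop_LocSobolev} itself is obtained --- the paper derives it directly from \cite[Corollary 1.5]{DWZ-Advances} via a reduction to the $\kappa=0$ case, rather than through the doubling-plus-Poincar\'e route you outline --- but this concerns the auxiliary result, not the proof of the mean value inequality.
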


 We are ready to give the
\begin{proof}[Proof of Theorem \ref{thm:p-Feller}]
According to Theorem \ref{th-basic}, it is enough to show that the minimal solution $h$ of \eqref{p-feller} is infinitesimal as $x \to \infty$ for some $\l > 0$ large enough.  Let us fix
$$
C=2D\b, \qquad \lambda =1+\frac{C^p}{p},
$$
where $D$ is the constant which appears in the preceding Proposition \ref{prp:volume_growth}. By Lemma \ref{lemma-W1p-minimal} of Section \ref{section-global-generic} we know that there exists a constant $A_{1}>0$ such that
$$
\int_{M \setminus B_{R/2}(o)}  \re^{C r(y)} h ^p \leq A_{1}.
$$
Next, let  $x \in M \setminus \bar{B}_{3R/2}(o)$ and $y \in B_{R/2} (x) \subset  \left( M\setminus \bar{B}_{R/2}(o)\right)$. Since
$$
r(x) - R/2\leq  r(y)
$$
then
\begin{align*}
e^{C(r(x)-1)}\int_{B_{R/2}(x)} h^p (y) &\leq \int_{B_{R/2}(x)}  e^{C r(y)} h^p (y) \\
&\leq \int_{M \setminus B_{R/2}(o)}  e^{C r(y)} h ^p(y) \\
 &\leq A_1 ,
\end{align*}
and
\begin{align*}
\int_{B_{R/2}(x)} h^p (y) &\leq    A_2e^{-C r(x)}, \qquad \mbox{with } A_2= A_1\re^{C}.
\end{align*}
Therefore, we can use the mean value inequality of Proposition \ref{thm:meanvalue} to deduce
$$
\a \textnormal{Vol}(B_{R/2} (x))^{\b} h^p(x) \leq  A_2\re^{-C r(x)},
$$
namely
\begin{equation*}\label{eq:pr1}
h^p (x) \leq A_3 \textnormal{Vol}(B_{R/2} (x)) ^{-\b} \re^{-C r(x)}.
\end{equation*}
By Proposition \ref{prp:volume_growth} and the definition of the constant $C$, we conclude that
\begin{equation*}
0<h(x) \leq A_3  \re^{(D\b-C) r(x)}= A_{3}\re^{-D\b r(x)}, \qquad \textnormal{on } M \setminus \bar{B}_{3R/2}(o).
\end{equation*}
Thus, $h(x) \to 0$ (exponentially) as $x \to \infty$.
\end{proof}

\subsubsection{Lower volume estimate and Sobolev inequality}\label{section-Appendix}
In this section we provide rather detailed proofs of the results used in the proof of Theorem \ref{thm:p-Feller}, most notably the lower estimate for the volume of $B_1(x)$ of Proposition
\ref{Prop: LVD+VolLowerEstimate} and the validity of the Sobolev inequality under integral Ricci curvature bounds contained in Proposition \ref{Prop_LocSobolev}.

We first prove that if $R\bar k_{q,\kappa}(R) $ is sufficiently small then  the measure of $M$ is uniformly locally  doubling.

\begin{lemma}
\label{Lemma LVD}
Given  $q>m/2$ and $\kappa \leq 0$, there exists a constant $\epsilon_0=\epsilon_0(q, \kappa, m)>0$ such that if
\begin{equation}
\label{k smallness condition}
R^2\bar k_{q,\kappa}(R)<\epsilon_0
\end{equation}
for some $0<R\leq 1$ then, for very $0<r_1<r_2\leq  R$ and $x\in M$, we have
\begin{equation}
\label{LVD}
\left(\vol B_{r_1}(x)\over \vol B_{r_2}(x)\right)^{\frac 1{2q}}\geq \frac 12
\left(v_\kappa(r_1)\over v_\kappa(r_2)\right)^{\frac 1{2q}}
\end{equation}
\end{lemma}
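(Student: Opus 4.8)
The plan is to deduce \eqref{LVD} from a relative volume comparison of Petersen--Wei type, valid under the smallness condition \eqref{k smallness condition}. Work in geodesic polar coordinates $(r,\theta)$ centered at $x$, let $\mathcal{A}(r,\theta)$ denote the Riemannian volume density (extended by $0$ beyond the cut locus of $x$, so that integration recovers $\vol B_r(x)$), and let $\mathcal{A}_\kappa(r)=\sigma^{m-1}(r)$ be the corresponding density of the model $\mathbb{M}_\sigma$, so that $\partial_r\log\mathcal{A}=\Delta r$ and $\partial_r\log\mathcal{A}_\kappa=\bar m_\kappa:=(m-1)\sigma'/\sigma$. The central quantity is the excess mean curvature $\psi:=(\Delta r-\bar m_\kappa)_+$, which measures the defect in the pointwise Laplacian comparison.

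The first task is a pointwise bound on $\psi$. From the Riccati inequality $\partial_r(\Delta r)+(\Delta r)^2/(m-1)\le-\ric(\partial_r,\partial_r)$ along a unit-speed geodesic from $x$, the model identity $\bar m_\kappa'+\bar m_\kappa^2/(m-1)=-(m-1)\kappa$, and the defining inequality $\ric(\partial_r,\partial_r)-(m-1)\kappa\ge-\rho_\kappa$, I would subtract and use $\bar m_\kappa\ge0$ (this is where $\kappa\le0$ enters) to get, on $\{\psi>0\}$,
\[
\psi'+\frac{2\sigma'}{\sigma}\,\psi\le\rho_\kappa .
\]
Multiplying by the integrating factor $\sigma^2$ yields $(\sigma^2\psi)'\le\sigma^2\rho_\kappa$, and since $\sigma^2\psi\to0$ as $r\to0^+$,
\[
\psi(r,\theta)\le\frac{1}{\sigma^2(r)}\int_0^r\sigma^2(s)\,\rho_\kappa(s,\theta)\,ds .
\]
Raising this to the power $2q$, integrating against $\sigma^{m-1}\,dr\,d\theta$ over $B_R(x)$, and applying H\"older to the inner integral, the hypothesis $q>m/2$ guarantees finiteness of the auxiliary $\sigma$-weighted integral and produces the $L^{2q}$ estimate $\int_{B_R(x)}\psi^{2q}\,d\vol\le C(m,q)\int_{B_R(x)}\rho_\kappa^{q}\,d\vol$. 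This step, in which $q>m/2$ is indispensable, is the analytic heart of the argument.

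Next I would pass from densities to volumes. Integrating $\partial_r\log(\mathcal{A}/\mathcal{A}_\kappa)\le\psi$ controls the ratio of densities along each ray; carrying out the Petersen--Wei averaging (integrate over $\theta$ and over $r\le r_2$, then apply H\"older with exponent $2q$ to the radial $\psi$-integral, balancing the powers so that the $1/(2q)$-root surfaces on both sides) converts the $L^{2q}$ bound into the relative comparison, for $0<r_1<r_2\le R$,
\[
\left(\frac{\vol B_{r_2}(x)}{v_\kappa(r_2)}\right)^{\frac1{2q}}-\left(\frac{\vol B_{r_1}(x)}{v_\kappa(r_1)}\right)^{\frac1{2q}}\le C(m,q,\kappa)\,\bigl(R^2\,\bar k_{q,\kappa}(R)\bigr)^{\frac12}\left(\frac{\vol B_{r_2}(x)}{v_\kappa(r_2)}\right)^{\frac1{2q}},
\]
where I use $\int_{B_R(x)}\rho_\kappa^q=\vol B_R(x)\,\bar k_{q,\kappa}(x,R)^q$, the comparability $v_\kappa(r)\asymp_{\kappa,m}r^m$ for $0<r\le1$, and $\bar k_{q,\kappa}(x,R)\le\bar k_{q,\kappa}(R)$.

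Setting $g(r)=(\vol B_r(x)/v_\kappa(r))^{1/(2q)}$, the comparison reads $g(r_2)\bigl(1-C(R^2\bar k_{q,\kappa}(R))^{1/2}\bigr)\le g(r_1)$. Choosing $\epsilon_0=\epsilon_0(q,\kappa,m)$ so small that $C\,\epsilon_0^{1/2}\le1/2$, the smallness condition \eqref{k smallness condition} forces $g(r_1)\ge\frac12\,g(r_2)$, which is exactly \eqref{LVD} after multiplying through by $(v_\kappa(r_1)/v_\kappa(r_2))^{1/(2q)}$. The step I expect to be the main obstacle is the bookkeeping in the passage from densities to volumes: producing \emph{precisely} the scale-invariant error $(R^2\bar k_{q,\kappa}(R))^{1/2}$ with the correct power of $v_\kappa$ on the right, while correctly handling the cut locus (so the density comparison survives in the integrated/barrier sense) and arranging the radial weighting so that the $1/(2q)$-power appears symmetrically on both sides.
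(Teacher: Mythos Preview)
Your overall strategy is the same as the paper's: derive the Petersen--Wei relative volume comparison and then choose $\epsilon_0$ small. Your first two steps (the Riccati-based bound on $\psi$ and the $L^{2q}$ estimate $\int_{B_R}\psi^{2q}\le C\int_{B_R}\rho_\kappa^q$) amount to a self-contained re-derivation of what the paper simply quotes as Lemmas~2.1--2.2 of \cite{PW-GAFA}; that part is fine.

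The gap is in the displayed inequality you claim to obtain from the averaging step:
\[
g(r_2)-g(r_1)\le C\bigl(R^2\bar k_{q,\kappa}(R)\bigr)^{1/2}g(r_2),\qquad g(r)=\bigl(\vol B_r(x)/v_\kappa(r)\bigr)^{1/(2q)}.
\]
What Petersen--Wei actually gives (the paper's inequality \eqref{A1}) is
\[
g(r_2)-g(r_1)\le C\Bigl(\int_{B_R(x)}\rho_\kappa^q\Bigr)^{1/(2q)}\!\int_0^R v_\kappa^{-1/(2q)}
= C\,\bar k_{q,\kappa}(x,R)^{1/2}\,(\vol B_R(x))^{1/(2q)}\!\int_0^R v_\kappa^{-1/(2q)},
\]
with $\vol B_R(x)$, not $\vol B_{r_2}(x)$, on the right. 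Converting $(\vol B_R(x))^{1/(2q)}$ into $g(r_2)$ is equivalent to bounding $\vol B_R(x)/v_\kappa(R)$ by a constant times $\vol B_{r_2}(x)/v_\kappa(r_2)$, which is precisely the relative volume control \eqref{LVD} you are trying to prove; so the step as written is circular. The paper closes this loop by a bootstrap: it applies \eqref{A1} once more with $(r_2,R)$ in place of $(r_1,r_2)$ to estimate $(v_\kappa(r_2)/\vol B_{r_2}(x))^{1/(2q)}$ in terms of $(v_\kappa(R)/\vol B_R(x))^{1/(2q)}$, obtaining that the braces in \eqref{A2} are bounded by $f=t/(1-t)$ with $t=C\bar k_{q,\kappa}(x,R)^{1/2}\int_0^R(v_\kappa(R)/v_\kappa(r))^{1/(2q)}dr$; since $R^{-1}\int_0^R(v_\kappa(R)/v_\kappa(r))^{1/(2q)}dr$ is bounded for $R\in(0,1]$ (using $q>m/2$), choosing $\epsilon_0$ so that $t\le 1/3$ yields $f\le 1/2$ and hence \eqref{LVD}. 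You correctly flag the density-to-volume passage as the main obstacle, but the specific missing ingredient is this self-improvement step, not the bookkeeping of the $R^2$ scaling.
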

\begin{proof}
The proof follows the lines of that of Theorem 2.1 in \cite{PW-TAMS}.

Combining Lemmas 2.1 and 2.2 in \cite{PW-GAFA}  we deduce that given $q>m/2$, there exists a constant $C=C(m,q,\kappa)$ such that for every $0<r\leq 2$
\[
\frac{d}{dr}\left(\vol(B_r(x))\over v_\kappa(r)\right)\leq C \left(\vol(B_r(x))\over v_\kappa(r)\right)^{1-\frac1{2q}}\left(\int_{B_r(x)}\!\!\!\!\rho_\kappa (x)^q\right)^{\frac 1{2q}}\!v_\kappa (r)^{-\frac 1{2q}},
\]
so that, integrating between $0<r_1<r_2\leq R\leq 2$ yields
\begin{multline}
\label{A1}
\left(\vol(B_{r_2}(x))\over v_\kappa(r_2)\right)^{\frac 1{2q}}
-\left(\vol(B_{r_1}(x))\over v_\kappa(r_1)\right)^{\frac 1{2q}}\\\leq
C \left(\int_{B_{R}(x)} \!\!\!\!\rho_\kappa (x)^q\right)^{\frac 1{2q}}\int_0^{R} \!v_\kappa (r)^{-\frac 1{2q}}dr,
\end{multline}
and rearranging
\begin{multline}
\label{A2}
\left(v_\kappa(r_1)\over v_\kappa(r_2)\right)^{\frac 1{2q}}
-\left(\vol(B_{r_1}(x))\over \vol(B_{r_2}(x))\right)^{\frac 1{2q}}
\leq
\left(v_\kappa(r_1)\over v_\kappa(r_2)\right)^{\frac 1{2q}} \\
\times\left\{C
\left(v_\kappa(r_2)\over \vol(B_{r_2}(x))\right)^{\frac 1{2q}}
\left(\int_{B_{R}(x)} \!\!\!\!\rho_\kappa (x)^q\right)^{\frac 1{2q}}
\int_0^{R} \!v_\kappa (r)^{-\frac 1{2q}}dr
\right\}.
\end{multline}
Now we proceed as in \cite{PW-TAMS} Theorem 2.1, and use \eqref{A1} with $r_2$ and $R$ in place of $r_1$ and $r_2$, respectively,  to estimate
\begin{multline*}
\left(v_\kappa(r_2) \over \vol(B_{r_2}(x)) \right)^{\frac 1{2q}}\leq
\left( v_\kappa(R)\over \vol(B_{R}(x)) \right)^{\frac 1{2q}} \\ \times
\left\{
1-
C
\left(
\frac 1 {\vol B_R(x)} \int_{B_{R}(x)} \rho_\kappa (x)^q
\right)^{\frac 1{2q}}
\int_0^{R}
\left(v_\kappa(R)\over v_\kappa (r)\right)^{\frac 1{2q}}dr
\right\}^{-1},
\end{multline*}
provided the quantity at the denominator is positive. We deduce that the quantity in braces on the right hand side of \eqref{A2} is bounded above by
\[
f(x,R)=\frac
{\displaystyle{
C\bar k_{q,\kappa}(x,R)^{\frac 12}
\int_0^R
\left(v_\kappa(R)\over v_\kappa (r)\right)
^{\frac 1{2q}}dr
}}
{\displaystyle{1- C\bar k_{q,\kappa}(x,R)^{\frac 12}
\int_0^R
\left(v_\kappa(R)\over v_\kappa (r)\right)
^{\frac 1{2q}}dr.
}}
\]
Noting that the function $\phi(t)=t/(1-t)$ is strictly increasing on  $[0,1)$ and $\phi(1/3)=1/2$, we  conclude that if
\[
R\bar{k}_{q,\kappa}(R)^{\frac 12}\leq \frac{C}{\displaystyle{3\sup_{R\in(0,1]}
R^{-1}\int_0^R
\left(v_\kappa(R)\over v_\kappa (r)\right)
^{\frac 1{2q}}dr}}=\epsilon_0^{\frac 12},
\]
then $f(x,R)\leq 1/2$ and inserting into \eqref{A2} and rearranging we obtain \eqref{LVD}.
\end{proof}

As an immediate consequence we have the following

\begin{corollary}
\label{Cor LVD}
Given $q>m/2$ and $\kappa\leq 0$, assume that  $R\bar{k}_{q,\kappa}(R)\leq \epsilon_0$ for some $0< R\leq 1 $, where $\epsilon_0$ is the constant in Lemma \ref{Lemma LVD}. Then there exists a constant $C(q,\kappa, m)>0$ such that, for every $x\in M$ and $0<r\leq R/2$,
\begin{equation}
\label{LVD2}
\vol B_{2r}(x)\leq C \vol B_r(x),
\end{equation}
that is Riemannian measure on $M$ is uniformly locally doubling.
\end{corollary}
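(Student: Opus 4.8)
The plan is to apply Lemma \ref{Lemma LVD} verbatim with the specific choice $r_{1}=r$ and $r_{2}=2r$, and then to dispose of the comparison-space volume ratio by a direct computation. Since $0<r\leq R/2$ we have $0<r<2r\leq R$, so the hypotheses of Lemma \ref{Lemma LVD} are satisfied; raising both sides of \eqref{LVD} to the power $2q$ and rearranging yields
\[
\frac{\vol B_{2r}(x)}{\vol B_{r}(x)}\leq 2^{2q}\,\frac{v_\kappa(2r)}{v_\kappa(r)}.
\]
The uniformity in $x$ of the doubling constant is then automatic, because \eqref{LVD} holds for every $x\in M$ with the same constants; it remains only to bound the model-volume ratio $v_\kappa(2r)/v_\kappa(r)$ independently of $r$.

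For this I would use the explicit expression $v_\kappa(r)=c_{m}\int_{0}^{r}\sigma^{m-1}(t)\,dt$. When $\kappa=0$ one has $\sigma(t)=t$, hence $v_{0}(r)=c_{m}r^{m}/m$ and the ratio equals $2^{m}$ identically. When $\kappa<0$, writing $a=\sqrt{-\kappa}$ and $\sigma(t)=\sinh(at)/a$, the function $r\mapsto v_\kappa(2r)/v_\kappa(r)$ is continuous on $(0,+\infty)$ and, since $\sigma(t)\sim t$ as $t\to 0^{+}$, it tends to $2^{m}$ as $r\to 0^{+}$; it therefore extends continuously to $[0,+\infty)$ and is in particular bounded on the compact interval $[0,1/2]$ by a constant $C_{0}=C_{0}(\kappa,m)$. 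Since $r\leq R/2\leq 1/2$, this bound applies, and setting $C=2^{2q}C_{0}$ produces \eqref{LVD2} with $C=C(q,\kappa,m)$, as claimed.

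I do not expect any genuine obstacle: the entire analytic content has already been extracted in the local volume doubling estimate \eqref{LVD} of Lemma \ref{Lemma LVD}, and the Corollary merely specializes $r_{2}=2r_{1}$ and absorbs the (bounded) comparison-space volume ratio into the constant. The only point requiring a word of justification is the uniformity of $v_\kappa(2r)/v_\kappa(r)$ in $r$, which is immediate from the explicit form of $v_\kappa$ together with the fact that $r$ ranges over the bounded interval $(0,1/2]$; in particular the constant $C$ depends neither on $x$ nor on $r$, which is exactly the assertion that the Riemannian measure is \emph{uniformly} locally doubling.
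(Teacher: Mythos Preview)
Your proof is correct and follows exactly the same route as the paper: specialize \eqref{LVD} to $r_{1}=r$, $r_{2}=2r$, raise to the power $2q$, and absorb the model-space ratio $v_\kappa(2r)/v_\kappa(r)$ into the constant. The paper records the constant as $C(q,\kappa,m)=2^{2q}\sup_{(0,1]}v_\kappa(2t)/v_\kappa(t)$ (modulo what appears to be a typographical inversion in the displayed ratio), while you justify the finiteness of this supremum by the continuous extension to $r=0$; both amount to the same observation.
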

Indeed, using $r$ and $2r$ in \eqref{LVD} shows that \eqref{LVD2} holds with constant
 \[
C(q,\kappa, m)= 2^{2q}\sup_{(0,1]}
\left(v_\kappa(t)\over v_\kappa(2t)\right).
\]

Applying Lemma 5.2.7 in \cite{Sa-Aspects} we obtain the following result which immediately implies Proposition \ref{prp:volume_growth}

\begin{proposition}
\label{Prop: LVD+VolLowerEstimate}
Given $q>m/2$ and $\kappa\leq 0$, assume that  $R\bar{k}_{q,\kappa}(R)\leq \epsilon_0$ for some $0< R\leq 1 $, where $\epsilon_0$ is the constant in Lemma \ref{Lemma LVD}. Then there exist  constants $D_0$ and $D_1$ depending only on $q$,$\kappa$ and $m$ such that
 \begin{itemize}
 \item[1.] for every $x,y\in M$ and $0<r\leq R/2$,
 \[
 \vol B_r(y)\leq e^{D_0d(x,y)/r}\vol B_{r}(x);
 \]
 \item[2.] for every $x\in M$ and every $r\geq R$
 \[
 \vol B_r(x)\leq D_1^{r/R} \vol B_R(x).
 \]
 \end{itemize}
\end{proposition}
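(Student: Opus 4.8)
The plan is to deduce both statements purely from the uniform local doubling property already established in Corollary \ref{Cor LVD}: under the smallness assumption \eqref{k smallness condition} there is a constant $C=C(q,\kappa,m)$ with
\[
\vol B_{2r}(x)\le C\,\vol B_r(x),\qquad \forall x\in M,\ 0<r\le R/2 .
\]
Once this metric--measure inequality is in force the curvature has done its job, and what remains is the abstract implication ``uniform local doubling $\Rightarrow$ same-scale volume comparison and at most exponential volume growth''. This is exactly the content of Lemma 5.2.7 in \cite{Sa-Aspects}; I would invoke it directly for the precise constants, and the paragraphs below are meant to record the underlying mechanism.

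For item (1) I would argue by chaining along a minimizing geodesic. Fix $x,y\in M$ and $0<r\le R/2$, and choose points $x=z_0,z_1,\dots,z_N=y$ on a minimizing geodesic with $d(z_i,z_{i+1})\le r$ and $N=\lceil d(x,y)/r\rceil$. Then $B_r(z_{i+1})\subseteq B_{2r}(z_i)$, so local doubling gives $\vol B_r(z_{i+1})\le C\,\vol B_r(z_i)$; iterating over $i$ yields
\[
\vol B_r(y)\le C^{N}\vol B_r(x)\le C\,e^{(\log C)\,d(x,y)/r}\,\vol B_r(x),
\]
which is the exponential bound of item (1) with $D_0=\log C$ after absorbing the leading factor $C$ into the constant. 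Specializing to $y=o$ with $0<r\le R/2$ fixed produces verbatim the lower estimate \eqref{vol-lb} of Proposition \ref{prp:volume_growth}, the multiplicative constant $C$ there being precisely the harmless leading factor above and $D$ being determined by $\log C$ and the fixed radius $r$.

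The main obstacle is item (2), because the doubling inequality of Corollary \ref{Cor LVD} only controls balls of radius up to $R$, whereas here $r\ge R$ is unbounded: one cannot simply iterate doubling across the threshold scale. The remedy is a covering/packing argument that converts local doubling into at most exponential growth, carried out one annular layer of width $R/2$ at a time. Concretely, I would establish a uniform one–layer estimate $\vol B_{\rho+R/2}(x)\le \Lambda\,\vol B_\rho(x)$ for all $\rho\ge R/2$: cover the new shell $B_{\rho+R/2}(x)\setminus B_\rho(x)$ by balls of radius $R/4$ centered at a maximal $R/4$-separated net $\{y_i\}$, so that the concentric balls $B_{R/8}(y_i)$ are pairwise disjoint. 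To bound the number of net points without circular reference to the large volume one pushes each center radially inward by $R/2$ to a point $y_i'$ with $d(x,y_i')=d(x,y_i)-R/2\le\rho$; the bounded overlap of the $B_{R/8}(y_i')\subseteq B_\rho(x)$ controls their count, while item (1), applied to the nearby pairs $y_i,y_i'$ (distance $R/2$, radius $R/4$, hence a bounded factor $e^{2D_0}$), compares each shell ball to its pushed-in counterpart. Summing gives $\vol(\text{shell})\le \text{const}\cdot\vol B_\rho(x)$, i.e. the desired $\Lambda$. Iterating this layer estimate $k=\lceil 2(r-R)/R\rceil\le 2r/R$ times starting from $B_R(x)$ yields $\vol B_r(x)\le \Lambda^{k}\vol B_R(x)\le D_1^{r/R}\vol B_R(x)$ with $D_1=\Lambda^2$. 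The delicate point throughout is the bookkeeping of the net cardinality, which is exactly what Lemma 5.2.7 of \cite{Sa-Aspects} packages, so in the final write-up I would cite it for the constants and include the mechanism above only for the reader's orientation.
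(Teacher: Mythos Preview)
Your proposal is correct and matches the paper's approach exactly: the paper's entire proof is the single clause ``Applying Lemma 5.2.7 in \cite{Sa-Aspects}'', which is precisely the citation you invoke. Your additional paragraphs merely unpack the mechanism behind that lemma (geodesic chaining for item~1, layer-by-layer covering for item~2) and are not needed for the formal proof, though they are a reasonable expository supplement.
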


We also have the following result, which extends to the situation $\kappa<0$ the result of \cite{PW-TAMS}, Section 2.3 to the effect that if $R^2 \bar{k}_{q,\kappa}(R)\leq \epsilon_0$, where  $\epsilon_0$ is the constant in Lemma \ref{Lemma LVD}, then $ \bar{k}_{q,\kappa}(r)$ is bounded above for $0<r\leq R$ and tends to zero as $r\to 0.$ More precisely,
\begin{corollary}
\label{cor-small-r}
Given $q>m/2$ and $\kappa\leq 0$, assume that  $R\bar{k}_{q,\kappa}(R)\leq \epsilon_0$ for some $0< R\leq 1 $, where $\epsilon_0$ is the constant in Lemma \ref{Lemma LVD}. Then for every $0<r\leq R$ we have
\begin{equation}
\label{k-estimate}
r^2\bar{k}_{q,\kappa} (r)\leq
4
\left(
\frac{r^{2q}}{R^{2q}}\frac{v_\kappa(R)}{v_\kappa(r)}
\right)^{\frac 1q}R^2\bar{k}_{q,\kappa}(R) \to 0 \text{ as } r\to 0.
\end{equation}
\end{corollary}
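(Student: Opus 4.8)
The plan is to derive the inequality directly from two facts: the trivial monotonicity of the curvature integral in the radius, and the local volume doubling of Lemma~\ref{Lemma LVD}. First I would observe that since $0<r\le R$ we have $B_r(x)\subset B_R(x)$, hence $\int_{B_r(x)}\rho_\kappa^q\le \int_{B_R(x)}\rho_\kappa^q$. Recalling the definition of $\bar k_{q,\kappa}(x,r)$, this gives immediately
\[
\bar{k}_{q,\kappa}(x,r)^q=\frac{1}{\vol B_r(x)}\int_{B_r(x)}\rho_\kappa^q
\le \frac{\vol B_R(x)}{\vol B_r(x)}\,\bar{k}_{q,\kappa}(x,R)^q.
\]
So the whole problem reduces to controlling the volume ratio $\vol B_R(x)/\vol B_r(x)$, and here is where the hypothesis enters: since $R\le 1$ we have $R^2\bar{k}_{q,\kappa}(R)\le R\bar{k}_{q,\kappa}(R)\le\epsilon_0$, so Lemma~\ref{Lemma LVD} applies and yields the local volume doubling estimate \eqref{LVD}.

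Next I would feed $r_1=r$, $r_2=R$ into \eqref{LVD}, obtaining $(\vol B_r(x)/\vol B_R(x))^{1/(2q)}\ge \tfrac12 (v_\kappa(r)/v_\kappa(R))^{1/(2q)}$. Taking reciprocals and then raising both sides to the power $2q$ converts this into the upper bound
\[
\frac{\vol B_R(x)}{\vol B_r(x)}\le 2^{2q}\,\frac{v_\kappa(R)}{v_\kappa(r)}.
\]
Substituting this into the displayed inequality above and extracting the $q$-th root (noting $(2^{2q})^{1/q}=4$) gives $\bar{k}_{q,\kappa}(x,r)\le 4\,(v_\kappa(R)/v_\kappa(r))^{1/q}\,\bar{k}_{q,\kappa}(x,R)$. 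Since the constant $4\,(v_\kappa(R)/v_\kappa(r))^{1/q}$ is independent of $x$, taking the supremum over $x\in M$ preserves the inequality, and multiplying through by $r^2$ and writing $r^2/R^2=(r^{2q}/R^{2q})^{1/q}$ produces exactly the claimed estimate
\[
r^2\bar{k}_{q,\kappa}(r)\le 4\left(\frac{r^{2q}}{R^{2q}}\frac{v_\kappa(R)}{v_\kappa(r)}\right)^{1/q}R^2\bar{k}_{q,\kappa}(R).
\]

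Finally, for the limit as $r\to 0$ I would use the near-origin asymptotics of $v_\kappa$. From $v_\kappa(r)=c_m\int_0^r\sigma^{m-1}$ with $\sigma(t)=t$ (if $\kappa=0$) or $\sigma(t)=\tfrac{1}{\sqrt{-\kappa}}\sinh(\sqrt{-\kappa}\,t)\sim t$ (if $\kappa<0$), one gets $v_\kappa(r)\sim \tfrac{c_m}{m}r^m$ as $r\to 0$, so that $r^{2q}/v_\kappa(r)\sim \mathrm{const}\cdot r^{2q-m}$. Because $q>m/2$ forces $2q-m>0$, this tends to $0$, whence the right-hand side above is infinitesimal as $r\to 0$. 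The argument involves no real obstacle; the only point deserving care is the passage from the doubling estimate \eqref{LVD} to the reciprocal volume-ratio bound (correct handling of the exponents $1/(2q)$ and $2q$) and the verification that the small-$r$ behaviour of $v_\kappa$ together with the condition $q>m/2$ indeed makes the bound vanish.
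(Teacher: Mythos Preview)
Your proof is correct and follows essentially the same approach as the paper's own proof: bound $\bar{k}_{q,\kappa}(x,r)$ by $\bar{k}_{q,\kappa}(x,R)$ via the trivial inclusion $B_r(x)\subset B_R(x)$, control the resulting volume ratio $\vol B_R(x)/\vol B_r(x)$ using the local volume doubling estimate \eqref{LVD}, and then invoke $v_\kappa(r)\asymp r^m$ together with $q>m/2$ to get the vanishing as $r\to 0$. Your version is more explicit in tracking the exponents and in justifying why the hypothesis of Lemma~\ref{Lemma LVD} is met (via $R\le 1$), but there is no substantive difference.
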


\begin{proof}
As in \cite{PW-TAMS} we write
\[
\bar{k}_{q,\kappa}(r)= \left(\frac 1 {\vol B_r(x)}\int_{B_r(x)} \rho_\kappa(x)^q \right)^{\frac 1q} \leq \left(\vol B_R(x)\over \vol B_r(x)\right)^{\frac 1q} \bar{k}_{q, \kappa}(R),
\]
and \eqref{k-estimate} follows inserting inequality \eqref{LVD} and rearranging. Since $q>m/2$ and $v_\kappa(r)\asymp r^m$ as $r\to 0$, the right hand side of \eqref{k-estimate} goes to zero  as $r\to 0$.
\end{proof}

Using the previous result we obtain

\begin{proposition}
\label{Prop_LocSobolev}  Assume that \eqref{k smallness condition} holds for some $0<R\leq 1$. Then there exist $R_1\leq R$ and a constant $C$, depending only on $q$, $ \kappa$ and   $m$,  such that, for every $0<r\leq R_1$ and  every $f\in C^\infty_c(B_r(x))$,
\begin{equation}
\label{LocSobolev}
\left(
\int_{B_r(x)}
f^{\frac n{n+1}}
\right)
^{\frac{n+1}n}
\leq C
\frac{r}{(\vol B_r(x))^{\frac 1n}}
\int_{B_r(x)}|\nabla f|
\end{equation}
\end{proposition}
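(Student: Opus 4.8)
The plan is to obtain \eqref{LocSobolev} as a consequence of two facts now at our disposal — the uniform local volume doubling property (Corollary \ref{Cor LVD} and Proposition \ref{Prop: LVD+VolLowerEstimate}) and a local Neumann--Poincar\'e inequality on balls of radius at most $R_1$ — by invoking the self-improvement mechanism of Saloff-Coste \cite{Sa-Aspects}: on a space satisfying volume doubling together with a scale-invariant Poincar\'e inequality at a fixed scale, the whole family of Sobolev inequalities on balls is automatically available, and in particular the compactly supported ($L^1$, Dirichlet) one in the stated form. The effective dimension $n$ appearing in \eqref{LocSobolev} is the doubling exponent: iterating \eqref{LVD2} gives $\vol B_s(x)/\vol B_r(x)\ge c\,(s/r)^{n}$ for $0<s\le r\le R_1$ with $n=\log_2 C$, and it is precisely this reverse-doubling power that fixes the exponent of $\vol B_r(x)$ and the Sobolev conjugate on the left-hand side.

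The one missing ingredient is the local Poincar\'e inequality, and this is where Corollary \ref{cor-small-r} enters. Since $r^2\bar k_{q,\kappa}(r)\to 0$ as $r\to 0$, I would fix $R_1\le R/2$ so small that $r^2\bar k_{q,\kappa}(r)$ lies below the threshold needed to run the segment-inequality argument of Petersen--Wei \cite{PW-GAFA,PW-TAMS} on every ball $B_r(x)$ with $r\le R_1$. That argument estimates the deficit produced by the curvature through the integral of $\rho_\kappa$ weighted against the space-form comparison volume $v_\kappa$, and yields a $(1,1)$-Poincar\'e inequality
\[
\int_{B_r(x)} \bigl| f - f_{B_r(x)} \bigr| \,\le\, C\, r \int_{B_r(x)} |\nabla f|,
\]
with $C=C(q,\kappa,m)$ and $f_{B_r(x)}$ the mean of $f$ over $B_r(x)$. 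The reason for passing to small scales is that it neutralizes the negative curvature: although $\bar k_{q,\kappa}\le \bar k_{q,0}$ makes the hypothesis genuinely weaker than the case $\kappa=0$ of \cite{PW-TAMS}, the comparison here is performed against the hyperbolic model $v_\kappa$ rather than the Euclidean one, and the scale-invariant smallness from Corollary \ref{cor-small-r} is exactly what keeps all constants uniform in the center $x$ and in the radius $r\le R_1$.

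With volume doubling (Corollary \ref{Cor LVD}) and this $(1,1)$-Poincar\'e inequality both in force for $r\le R_1$, the self-improvement theorem of \cite{Sa-Aspects} delivers, for each such $r$ and every $f\in C^\infty_c(B_r(x))$, the relative isoperimetric inequality on $B_r(x)$ and hence, by the coarea formula, the Dirichlet--Sobolev inequality \eqref{LocSobolev}; the compact support hypothesis is what lets the mean correction be discarded and the clean power $(\vol B_r(x))^{-1/n}$ to appear. Every constant involved — the doubling constant of Corollary \ref{Cor LVD}, the Poincar\'e constant above, and the Saloff-Coste bound — depends only on $q,\kappa,m$, so the same is true of $R_1$ and $C$. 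I expect the main obstacle to be the Poincar\'e step for $\kappa<0$: this is the single place where curvature genuinely intervenes and where the $\kappa=0$ arguments of \cite{PW-TAMS} must be redone with $v_\kappa$ in place of the Euclidean volume, Corollary \ref{cor-small-r} being the device that makes the relative integral curvature negligible at small scales; by contrast, once doubling and Poincar\'e are secured, the passage to \eqref{LocSobolev} is a black-box application of the machinery in \cite{Sa-Aspects}.
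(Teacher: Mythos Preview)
Your approach is plausible and would likely succeed, but it is a genuinely different and considerably longer route than the one the paper takes. You propose to build \eqref{LocSobolev} from scratch via doubling plus a local $(1,1)$-Poincar\'e inequality (obtained through a Petersen--Wei segment-inequality argument adapted to $v_\kappa$), followed by the Saloff-Coste self-improvement machinery. The paper, by contrast, reduces the whole statement to the already-known $\kappa=0$ case of Dai--Wei--Zhang \cite{DWZ-Advances} in two lines: from $[\min(\ric)]_-\le [\min(\ric)-(m-1)\kappa]_-+(m-1)|\kappa|$ and Minkowski one gets $\bar k_{q,0}(r)\le \bar k_{q,\kappa}(r)+(m-1)|\kappa|$, and then Corollary \ref{cor-small-r} lets one choose $R_1\le R$ so that $r^2\bar k_{q,0}(r)\le r^2\bar k_{q,\kappa}(r)+(m-1)|\kappa|r^2\le \epsilon_1$ for all $0<r\le R_1$, which is exactly the hypothesis of \cite[Corollary 1.5]{DWZ-Advances}.

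So both proofs use Corollary \ref{cor-small-r} as the device that makes small-scale curvature negligible, but the paper applies it to control $\bar k_{q,0}$ directly and then quotes the Sobolev inequality as a black box, whereas you apply it to control $\bar k_{q,\kappa}$ and then redo the Poincar\'e step by hand before invoking \cite{Sa-Aspects}. Your route is more self-contained and arguably more instructive (it exposes why doubling plus Poincar\'e is the real engine), but you correctly flag the Poincar\'e step as the nontrivial obstacle, and you never actually carry it out. The paper's reduction sidesteps that obstacle entirely: once one notices the elementary inequality $\bar k_{q,0}\le \bar k_{q,\kappa}+(m-1)|\kappa|$, the case $\kappa<0$ costs nothing beyond the case $\kappa=0$.
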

\begin{proof}
By Corollary 1.5 in \cite{DWZ-Advances} (and a scaling argument) there exists a constant $\epsilon_1=\epsilon_1(q,m)$ such that if
\[
R_1^2 \bar{k}_{q,o}(R_1)=\sup_{x\in M} R_1^2
\left(
\frac 1{\vol B_{R_1}(x)}\int_{B_{R_1}(x)}[\min Ric(x)]_-^q dx
\right)^{\frac 1q}\leq \epsilon_1
\]
then \eqref{LocSobolev} holds with $C=10^{2n+4}$ for every $0<r\leq R_1$.

Since
\[
[\min (Ric)]_- \leq [\min(Ric)-(m-1)\kappa]_-+(m-1)|\kappa|,
\]
by Minkowski inequality we have
\[
\bar{k}_{p,0}(r)\leq \bar{k}_{p,\kappa}(r)+(m-1)|\kappa|.
\]
By Corollary \ref{cor-small-r} we can choose  $R_1\leq R$ is such that for every $0<r\leq R_1$
\[
r^2\bar{k}_{q,0}(r)\leq r^2\bar{k}_{q,\kappa}(r)+(m-1)|\kappa|r^2\leq \epsilon_1,
\]
and the required conclusion follows.
\end{proof}


\section{Global comparisons and Sobolev regularity: generic manifolds}\label{section-global-generic}
It was observed in \cite{PS-Feller, BPS-Spectral} that, on a stochastically complete Riemannian manifold $(M,g)$, a bounded subsolution $u$ of the exterior boundary value problem
\[
\begin{cases}
\Delta w = \lambda w \,\text{ on } \, M\setminus \bar{\Omega}\\
w\geq 0, \, w=1 \,\text{ on } \, \partial \Omega
\end{cases}
\]
is always dominated by the minimal solution $h$. On the one hand, this $L^{\infty}$-comparison gives uniqueness among bounded solutions of the problem. On the other hand, by combining the stochastic completeness with the Feller property, it implies that a nonnegative bounded subsolution $u$ must decay to $0$ at infinity.
This new phenomenon  appears e.g. on  complete Riemannian manifolds with Ricci curvature  lower bounds, and has interesting applications to the analysis of semilinear elliptic equations.\smallskip

This and the next section are devoted to extending this order of ideas to the nonlinear context of the $p$-Laplacian. We need to overcome the difficulties arising from the fact that the direct proof of the $L^{\infty}$-comparison makes an essential use of the linearity of the operator. To this end, we try to follow a different path that goes through the $L^{p}$ theory and it is related to a natural global Sobolev regularity satisfied by the minimal solution. In the route we point out a new characterization of  stochastic completeness in terms of the fact that such a
global Sobolev regularity property is enjoyed by any bounded subsolution. This new approach allows us to obtain a complete extension of \cite{PS-Feller, BPS-Spectral} to the whole scale $1< p <+\infty$ on complete manifolds with rotationally symmetric Riemannian metrics.

\subsection{Comparing with $L^{p}$-subsolutions}


In this section we consider a general complete Riemannian manifold $(M,g)$ and we show how to compare a given $L^{p}$-subsolution $u$ with the minimal solution $h$ of the boundary value problem
\begin{equation}\label{p-problem}
\begin{cases}
\Delta_p w = \lambda w^{p-1} \,\text{ on } \, M\setminus \bar{\Omega}\\
w\geq 0, \, w=1 \,\text{ on } \, \partial \Omega
\end{cases}
\end{equation}
where $\O \Subset M$ is a smooth domain and $\lambda>0$ is a fixed constant. See Theorem \ref{th-lp-comparison}. To this end, we use the following rather standard result.

\begin{proposition}\label{prop-w1p-comparison}
Let $D$ be a smooth, possibly unbounded domain inside the complete manifold $(M,g)$. Let also $\lambda>0$ and $1< p < +\infty$ be fixed numbers. Assume that $u,\, v\in C^0(\bar D)\cap W^{1,p}(D)$ satisfy
\begin{equation}\label{eq-comparison}
\begin{cases}
\Delta_p u-\lambda |u|^{p-2}u\geq   \Delta_p v-\lambda |v|^{p-2}v \,\text{ in }\, D
&\\
u\leq v \,\text{ on }\, \partial D.
\end{cases}
\end{equation}
Then
\[
u\leq v \, \text{ in }\, D.
\]
\end{proposition}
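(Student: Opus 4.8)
The plan is to test the weak form of the hypothesis against the positive part $\vp = (u-v)_{+} = \max(u-v,0)$ and to exploit the strict monotonicity of both the vector field $\xi \mapsto |\xi|^{p-2}\xi$ on $\rr^{m}$ and the scalar map $t \mapsto |t|^{p-2}t$ on $\rr$. Spelling out \eqref{eq-comparison} in the distributional sense and rearranging, the differential inequality states that, for every $0 \leq \vp \in C^{\infty}_{c}(D)$,
\[
\int_{D}\<|\nabla v|^{p-2}\nabla v - |\nabla u|^{p-2}\nabla u,\nabla\vp\> \;\geq\; \l\int_{D}\bigl(|u|^{p-2}u - |v|^{p-2}v\bigr)\vp .
\]
The first step is to observe that $(u-v)_{+}$ is an admissible test function: since $u,v\in W^{1,p}(D)$, also $(u-v)_{+}\in W^{1,p}(D)$, with $\nabla (u-v)_{+} = (\nabla u - \nabla v)\,\chi_{\{u>v\}}$; and the continuity of $u,v$ up to $\bar D$ combined with $u\leq v$ on $\partial D$ guarantees that $(u-v)_{+}$ vanishes on $\partial D$.

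Since $D$ may be unbounded, extending the inequality to this test function requires a cutoff argument, which I expect to be the main (though routine) technical obstacle. I would first enlarge the class of admissible $\vp$ from $C^{\infty}_{c}(D)$ to nonnegative $W^{1,p}$ functions of compact support by mollification, and then insert $\vp_{R} = \chi_{R}(u-v)_{+}$, where $\chi_{R}$ is a Lipschitz cutoff equal to $1$ on $B_{R}$, supported in $B_{2R}$, with $|\nabla\chi_{R}|\leq C/R$. The error terms produced when $\nabla$ falls on $\chi_{R}$ are controlled, via Hölder's inequality, by
\[
\frac{C}{R}\int_{B_{2R}\setminus B_{R}} |\nabla u|^{p-1}(u-v)_{+} \;\leq\; \frac{C}{R}\,\bigl\||\nabla u|^{p-1}\bigr\|_{L^{p'}(D)}\,\|(u-v)_{+}\|_{L^{p}(D)},
\]
and the analogous expression with $v$. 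Here $|\nabla u|^{p-1}\in L^{p'}(D)$ and $(u-v)_{+}\in L^{p}(D)$ precisely because $u,v\in W^{1,p}(D)$ \emph{globally}, so these errors tend to $0$ as $R\to\infty$ by the factor $1/R$. This is the one place where the global (rather than merely local) $W^{1,p}$ hypothesis is genuinely used.

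Passing to the limit yields the inequality above with $\vp = (u-v)_{+}$. On $\{u>v\}$ we have $\nabla\vp = \nabla u - \nabla v$, so the left-hand side becomes
\[
- \int_{\{u>v\}} \<|\nabla u|^{p-2}\nabla u - |\nabla v|^{p-2}\nabla v,\; \nabla u - \nabla v\> \;\leq\; 0
\]
by strict monotonicity of $\xi\mapsto|\xi|^{p-2}\xi$, while the right-hand side
\[
\l\int_{\{u>v\}}\bigl(|u|^{p-2}u - |v|^{p-2}v\bigr)(u-v)\;\geq\;0
\]
is nonnegative because $t\mapsto|t|^{p-2}t$ is increasing and $u>v$ there. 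The two displayed quantities are thus squeezed between $0$ and each other, forcing both to vanish. Since $\l>0$ and the integrand of the second one is strictly positive wherever $u>v$, the set $\{u>v\}$ must have measure zero; by continuity of $u-v$ this gives $u\leq v$ throughout $D$, completing the proof.
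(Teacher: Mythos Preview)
Your argument is correct and follows the same overall scheme as the paper: insert the truncated test function $\chi_{R}(u-v)_{+}$, control the cross term involving $\nabla\chi_{R}$ by H\"older using the global $W^{1,p}$ hypothesis, and send $R\to\infty$. The difference lies only in the endgame. The paper discards the zero-order term (it has the favourable sign) and from the vanishing of the gradient integral concludes $\nabla u=\nabla v$ a.e.\ on $\{u>v\}$; it then runs a connectedness argument ($\{u>v\}$ open, each component open and closed in $D$, hence equal to $D$) to reach a contradiction with the boundary data. You instead keep both terms, squeeze them to zero, and use the strict positivity of $(|u|^{p-2}u-|v|^{p-2}v)(u-v)$ together with $\lambda>0$ to force $|\{u>v\}|=0$ directly. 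Your route is shorter and avoids the topology, at the cost of genuinely needing $\lambda>0$; the paper's argument would still go through for $\lambda=0$.
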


\begin{proof}
Assume by contradiction that $u>v$ somewhere in $D$. Then for every
$0\leq\varrho\in C^\infty_c(M)$ the function $\phi=(u-v)_+\varrho$ is  in $W^{1,p}_{c}(D)$ and, therefore, using it as a test function in the weak definition of the inequality
\[
\Delta_p u-\lambda |u|^{p-2}u\geq   \Delta_p v-\lambda |v|^{p-2}v
\]
yields
\begin{align*}
&\int_{\{u>v\}} \varrho\langle |\nabla u|^{p-2} \nabla u - |\nabla v|^{p-2}\nabla v, \nabla u-\nabla v\rangle \\
&\leq \int_{\{u>v\}}(u-v) \cdot |  |\nabla u|^{p-2} \nabla u - |\nabla v|^{p-2}\nabla v| \cdot  |\nabla \varrho|\\
& - \lambda \int_{\{u>v\}} (|u|^{p-2}u - |v|^{p-2}v)(u-v) \varrho.
\end{align*}
It is well known that, given a finite dimensional Euclidean vector space $(V,\langle \cdot , \cdot \rangle)$, for every $x,y \in V$ we have
\begin{equation}\label{vectineq}
\langle|x|^{p-2}x - |y|^{p-2}y,x-y\rangle \geq 0
\end{equation}
where the equality sign holds if and only if $x=y$. In particular, this applies to $V= \rr$. Therefore, if we choose the cut-off function $\varrho = \varrho_{R} : M \to \rr_{\geq 0}$ in such a way that
\[
\varrho = 1\, \text{ on }  B_R(o),\quad \varrho = 0\, \text{ on } M \setminus B_{2R}(o),\quad \|\nabla \varrho \|_{\infty} \leq \frac{4}{R},
\]
it follows from  H\"older and Cauchy-Schwarz inequalities  that
\begin{align*}
&\int_{\{u>v\} \cap B_{R}(o)} \langle |\nabla u|^{p-2} \nabla u - |\nabla v|^{p-2}\nabla v, \nabla u-\nabla v\rangle\\
&\leq
\frac C R (\|u\|_p + ||v||_{p})(\|\nabla u\|_{p}^\frac{p-1}{p}+\|\nabla v\|_{p}^\frac{p-1}{p}).
\end{align*}
Thus, letting $R \to +\infty$ and using the properties of $\varrho$ we get
\[
\int_{\{u>v\}} \langle |\nabla u|^{p-2} \nabla u - |\nabla v|^{p-2}\nabla v, \nabla u-\nabla v\rangle =0
\]
which in turn, according to \eqref{vectineq}, implies that $\nabla u-\nabla v =0$ a.e. in $\{u>v\}$. Therefore $u-v$ is a positive constant in each of the connected components of $\{u>v\}$. Let $C\not= \emptyset$ be such a component. Since $D$ is locally path connected and $\{ u > v\}$ is an open subset of $D$ (here we are using that $u,v$ are continuous), $C$ is open in $D$. On the other hand, we have just deduced that $C$ is closed in $D$. It follows that $C=D$ and, therefore, $u-v \equiv c>0$ on $\bar D$. This contradicts the boundary condition satisfied by $u$ and $v$.
\end{proof}

\begin{remark}\label{rmk-w1p-comparison}
When $D$ is relatively compact, the comparison result holds true without assuming that $M$ is geodesically complete and the regularity of $u$ and $v$ can be relaxed to $W^{1,p}_{loc}(D) \cap C^{0}(\bar D)$. Actually, if the inequalities in \eqref{eq-comparison} are replaced by the corresponding equalities, the above arguments yield the following uniqueness result without any continuity assumption on the solutions.\smallskip

\noindent {\it Let $D$ be a smooth, relatively compact domain inside $(M,g)$. Let also $\lambda>0$ and $1< p < +\infty$ be fixed numbers. Assume that $u,\, v\in W^{1,p}(D)$ satisfy
\begin{equation*}
\begin{cases}
\Delta_p u-\lambda |u|^{p-2}u =   \Delta_p v-\lambda |v|^{p-2}v \,\text{ in }\, D
&\\
u =  v \,\text{ on }\, \partial D,
\end{cases}
\end{equation*}
where the boundary data are understood in the trace sense. Then
\[
u =  v \, \text{ a.e. in }\, D.
\]
}
\smallskip

\noindent Indeed, according to a characterization of the zero-boundary trace condition we still have $u-v \in W^{1,p}_{0}(D)$. Therefore, using the test function $\phi = (u - v)_{+} \in W^{1,p}_{0}(D)$ in the weak definition of the differential equality we get $\nabla u = \nabla v$ on $\{ u \geq v\}$. Similarly, by reversing the role of $u$ and $v$, $\nabla u = \nabla v$ on $\{v \geq u\}$. Since $D$ is connected, it follows that $u=v$ a.e. in $D$.
\end{remark}

We stress that the above comparison does not require any geometric assumption on the manifold.
It depends entirely on the Sobolev class of the functions involved. Let us observe that the minimal solution of \eqref{p-problem} belongs automatically to the required Sobolev class. Actually, more is true. First, we fix a notation. Let $D \subseteq M$ be a domain and  $0\leq \varrho \in L^{1}_{loc}(D)$ be a fixed weight. We set
\[
W^{1,p}_{\varrho}(D) = \{ u \in L^{p}(D,\varrho \,dv) : \nabla u \in L^{p}(D,\varrho \,dv) \},
\]
where $dv$ denotes the Riemannian measure.
\begin{lemma}\label{lemma-W1p-minimal}
Let $\O \Subset M$ be a smooth domain. Let $h \in C^{1,\alpha}_{loc}(M\setminus {\Omega})$ be the minimal solution of problem \eqref{p-problem}. Then
\[
h\in W^{1,p}_{\varrho}(M\setminus \bar{\Omega}) \cap L^{\infty}(M\setminus {\Omega}),
\]
where $\varrho(x) = \re^{Cr(x)}$,  $r(x)$ is the distance from a fixed origin $o \in M$ and $0< C < (\l p)^{\frac1 p} $
\end{lemma}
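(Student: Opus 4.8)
The $L^{\infty}$-membership is immediate: by Theorem \ref{th-basic}(a) the minimal solution satisfies $0<h<1$ on $M\setminus\bar\O$, so $h\in L^{\infty}(M\setminus\O)$. Everything therefore reduces to the weighted Sobolev bound $\int_{M\setminus\bar\O}\re^{Cr}(|\nabla h|^{p}+h^{p})<+\infty$ for $0<C<(\l p)^{1/p}$. The plan is to obtain this as a uniform-in-$n$ estimate for the approximating sequence $\{h_{n}\}$ of \eqref{h_n positive} (which converges to $h$ in $C^{1}_{loc}$) and then pass to the limit by Fatou. The decisive structural point is that each $h_{n}$ vanishes on $\partial\O_{n}$, so it provides its own compact support: this lets us test with the full (uncut) exponential weight and avoids introducing an artificial cutoff at infinity, whose error would be uncontrollable because of the growth of $\re^{Cr}$.

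First I would record the a priori \emph{unweighted} energy bound. Since $h_{n}$ minimizes $E_{p,\l}$ over $\S_{n}$, comparing with a fixed competitor $w\in W^{1,p}$ satisfying $0\le w\le 1$, $w=1$ on $\partial\O$ and $w$ supported in a fixed ball, gives $E_{p,\l}(h_{n})\le E_{p,\l}(w)=:K$ for all large $n$, whence $\int_{\O_{n}\setminus\bar\O}(|\nabla h_{n}|^{p}+\l h_{n}^{p})\le pK$ uniformly. In particular the $W^{1,p}$-norms of $h_{n}$ on any fixed collar $\O_{\d}\setminus\bar\O$ are uniformly bounded. Next, fix a cutoff $\z$ that vanishes near $\partial\O$ and equals $1$ outside $\O_{\d}$, and use the admissible test function $\phi_{n}=\z\,h_{n}\,\re^{Cr}$ in the weak formulation of $\Delta_{p}h_{n}=\l h_{n}^{p-1}$. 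Using $\nabla(\re^{Cr})=C\re^{Cr}\nabla r$ and $|\nabla r|=1$, this yields
\[
\int \z\re^{Cr}|\nabla h_{n}|^{p}+\l\int\z\re^{Cr}h_{n}^{p}
=-\!\int h_{n}\re^{Cr}|\nabla h_{n}|^{p-2}\langle\nabla h_{n},\nabla\z\rangle
-C\!\int\z\re^{Cr}h_{n}|\nabla h_{n}|^{p-2}\langle\nabla h_{n},\nabla r\rangle.
\]
The $\nabla\z$-term is supported in the fixed compact set $\O_{\d}\setminus\O$, where the weight is bounded and the energies are uniformly controlled, so it is bounded by a constant $K_{0}$ independent of $n$.

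The heart of the matter is the last term. Bounding $|\langle\nabla h_{n},\nabla r\rangle|\le|\nabla h_{n}|$ and applying Young's inequality with exponents $p$ and $p/(p-1)$ and a free parameter $t>0$, I would split it as $\theta_{1}\int\z\re^{Cr}|\nabla h_{n}|^{p}+\theta_{2}\int\z\re^{Cr}h_{n}^{p}$ with $\theta_{1}=\frac{C(p-1)}{p}t^{-p/(p-1)}$ and $\theta_{2}=\frac{C t^{p}}{p}$. A direct computation shows that a $t$ can be chosen with $\theta_{1}<1$ and $\theta_{2}<\l$ simultaneously precisely when $C$ is below the critical value $\bigl(\l p^{p}/(p-1)^{p-1}\bigr)^{1/p}$, and since $(\l p)^{1/p}<\bigl(\l p^{p}/(p-1)^{p-1}\bigr)^{1/p}=\bigl(\tfrac{p}{p-1}\bigr)^{(p-1)/p}(\l p)^{1/p}$, the hypothesis $C<(\l p)^{1/p}$ leaves a strictly positive slack. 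Absorbing the two pieces into the left-hand side gives $(1-\theta_{1})\int\z\re^{Cr}|\nabla h_{n}|^{p}+(\l-\theta_{2})\int\z\re^{Cr}h_{n}^{p}\le K_{0}$ uniformly in $n$. Letting $n\to+\infty$ and using the locally uniform $C^{1}$ convergence $h_{n}\to h$ together with Fatou's lemma controls the weighted integral of $|\nabla h|^{p}+h^{p}$ over $M\setminus\O_{\d}$; adding the finite contribution of the compact collar $\O_{\d}\setminus\bar\O$ yields $h\in W^{1,p}_{\varrho}(M\setminus\bar\O)$.

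I expect the main obstacle to be this absorption step, i.e.\ squeezing the gradient-of-weight error between the two good terms: it is what forces the sharp threshold $C<(\l p)^{1/p}$ and requires the two-exponent Young inequality rather than a crude one. A secondary technical point, easily handled by the cutoff $\z$ and the uniform energy bound, is the boundary condition $h=1$ on $\partial\O$, which prevents $h_{n}\re^{Cr}$ itself from being admissible; the essential simplification is that the Dirichlet condition $h_{n}=0$ on $\partial\O_{n}$ supplies compact support for free, so no cutoff of $\re^{Cr}$ at infinity is ever needed.
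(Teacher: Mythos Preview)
Your argument is correct and follows the same core strategy as the paper: test the equation for the approximants $h_{n}$ against $h_{n}\,\re^{Cr}$, use Young's inequality to absorb the cross term coming from $\nabla(\re^{Cr})$, and pass to the limit by Fatou. The two proofs differ only in technical packaging. First, the paper handles the inner boundary $\partial\O$ by integrating by parts directly and bounding the resulting flux $\int_{\partial\O}\re^{Cr}|\nabla h_{n}|^{p-2}\partial_{\nu}h_{n}$ via Lieberman's uniform $C^{1,\alpha}$ Schauder estimates up to $\partial\O$; you instead insert a cutoff $\zeta$ vanishing near $\partial\O$ and control the collar error by the preliminary unweighted energy bound $E_{p,\l}(h_{n})\le K$. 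Your route is slightly more elementary (no boundary regularity needed), at the cost of the extra minimization comparison. Second, in the absorption step the paper takes the specific Young split $C h_{n}|\nabla h_{n}|^{p-1}\le \tfrac{C^{p}}{p}h_{n}^{p}+\tfrac{p-1}{p}|\nabla h_{n}|^{p}$, which yields exactly the stated threshold $C<(\l p)^{1/p}$; your one-parameter version shows that the method actually works up to the sharper value $\bigl(\l p^{p}/(p-1)^{p-1}\bigr)^{1/p}$, of which the paper's threshold is the particular case $t=C^{(p-1)/p}$.
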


\begin{proof}
Choose a sequence $\{R_{n}\} \nearrow +\infty$ such that $\O \Subset B_{R_{0}}(o)$. Next, recall that $h$ is obtained as the limit function of the sequence $\{h_n\}$, where for every fixed $n$,
$$
\begin{cases}
\Delta_{p} h_{n} = \lambda h_{n}^{p-1} & \mbox{on } \bar{B}_{R_{n}}(o)\setminus \O,\\
h=1 & \mbox{on } \partial \O,\\
h= 0 & \mbox{on }  B_{R_{n}}(o).
\end{cases}
$$
Then, denoting  by ${\nu}$ the outward  unit normal to  $\partial (\bar{B}_{R_{n}}(o)\setminus \O)$, we have
\begin{align*}\label{eq:integ_parts}
\int_{B_{R_{n}}(o) \setminus \O} \re^{C r}h_{n} \Delta_p h_{n} &=\int_{\partial \O\cup\partial B_{R_{n}}(o) }\textrm{\re}^{Cr} h_n |\nabla h_n|^{p-2}\frac{\partial h_n }{\partial {\nu}} \\
 &- \int_{B_{R_{n}}(o) \setminus \O} \langle \nabla ( \re^{Cr }h_n ),|\nabla h_n |^{p-2} \nabla h_n  \rangle_g \\
&=\int_{\partial \O}  \re^{Cr}|\nabla h_n |^{p-2} \frac{\partial h_n }{\partial{\nu}} \\
&- \int_{B_{R_{n}}(o) \setminus \O}C  \re^{C r} h_n  |\nabla h_n |^{p-2}\langle \nabla r, \nabla h_n  \rangle_g \nonumber\\
&  - \int_{B_{R_{n}}(o) \setminus \O}  \re^{C r} |\nabla h_n |^p.
\end{align*}
Since $0 \leq h_n \leq 1$ for every $n$, by Schauder estimates up to the boundary
$$
\left\vert \int_{\partial \O}  \re^{Cr}|\nabla h_n |^{p-2} \frac{\partial h_n }{\partial{\nu}} \right\vert
\leq \int_{\partial \O} \re^{Cr}|\nabla h_{n}|^{p-1}  \leq A_1,
$$
where the constant $A_1>0$ is independent of $n$. Then, from the above inequality and using that $\Delta_p h_n  = \lambda h_n ^{p-1}$, we get		
\begin{align*}
\int_{B_{R_{n}}(o) \setminus \O}  \lambda\re^{C r}h_n ^p &\leq A_1   - \int_{B_{R_{n}}(o) \setminus \O} C  \re^{C r} h_n   |\nabla h_n |^{p-2}\langle \nabla r , \nabla h_n  \rangle_g \nonumber\\
&  - \int_{B_{R_{n}}(o) \setminus \O}  \re^{C r} |\nabla h_n |^p, \nonumber\\
&\leq A_1 +  \int_{B_{R_{n}}(o) \setminus \O}  C  \re^{C r} h_n  | \nabla h_n  |^{p-1}\\
& - \int_{B_{R}(o) \setminus \O}  \re^{C r} |\nabla h_n |^p\\
& \leq A_1 + \int_{B_{R_{n}}(o) \setminus \O} \left( \frac{C^p}{p}h_n ^p + \frac{|\nabla h_n |^p}{\frac{p}{p-1}}  \right)\re^{C r} \\
& - \int_{B_{R_{n}}(o) \setminus \O}  \re^{C r} |\nabla h_n |^p,
\end{align*}
whence, rearranging,
$$
\left(\l - \frac{C^{p}}{p}\right)\int_{B_{R_{n}}(o) \setminus \O}  \re^{C r} h_n  ^p
+ \frac{1}{p} \int_{B_{R_{n}}(o) \setminus \O} \re^{Cr} |\nabla h_{n}|^{p}
\leq A_1.
$$
Recalling that $h_{n} \to h$ where the convergence is  $C^{1}$ on compact sets, the desired conclusion follows by letting $R \to +\infty$ and by using Fatou's Lemma.
\end{proof}

Another interesting fact related to the Sobolev regularity required by Proposition \ref{prop-w1p-comparison} is that, if we limit ourselves to positive  solutions of $\Delta_p u \geq \lambda |u|^{p-2}u$ on exterior domains, then the $L^{p}$-integrability of $\nabla u$ follows from the $L^{p}$-integrability of the function. This is a simple consequence of the Caccioppoli inequality as we are going to show.

In what follows, given a set $F \subseteq M$ and a real number $\d>0$ we denote by $F_{\d}$  the $\d$-neighborhood of $F$.

\begin{lemma}\label{lemma-w1p-sub}
Let $\O \Subset M$ be a  domain.  Let $u\in W^{1,p}_{loc}(M\setminus \bar \Omega)$  be a non-negative solution of the differential inequality
\[
\Delta_p u \geq \lambda u^{p-1}\, \text{ on } \, M\setminus \bar \Omega,
\]
for some $\lambda >0$.
If $u\in L^p(M\setminus   \Omega)$  then, for any $\e>0$,
\[
| \nabla u | \in L^p(M\setminus {\Omega}_{\e}).
\]
\end{lemma}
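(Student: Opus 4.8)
The plan is to derive a Caccioppoli-type inequality by testing the weak form of $\Delta_p u \geq \l u^{p-1}$ against $\varphi = u\psi^p$, where $\psi$ is a nonnegative cutoff compactly supported in $M\setminus\bar\O$. Since $u\in W^{1,p}_{loc}$ is only weakly differentiable, I would first record that the weak inequality
\[
-\int \langle |\nabla u|^{p-2}\nabla u, \nabla\varphi\rangle \geq \l\int u^{p-1}\varphi,
\]
valid by hypothesis for all $0\leq\varphi\in C^\infty_c(M\setminus\bar\O)$, extends by a routine density argument to all nonnegative $\varphi\in W^{1,p}$ with compact support in $M\setminus\bar\O$. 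As $u\geq 0$ and $\psi^p\geq 0$, the choice $\varphi=u\psi^p$ is admissible and lies in this class.

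With $\nabla\varphi = \psi^p\nabla u + pu\psi^{p-1}\nabla\psi$, and after discarding the nonnegative term $\l\int u^p\psi^p$ (so that in fact only $\Delta_p u\geq 0$ is used), the inequality becomes
\[
\int \psi^p|\nabla u|^p \leq p\int u\psi^{p-1}|\nabla u|^{p-1}|\nabla\psi|,
\]
where I have used $|\langle|\nabla u|^{p-2}\nabla u,\nabla\psi\rangle|\leq |\nabla u|^{p-1}|\nabla\psi|$. Writing the integrand on the right as $(\psi|\nabla u|)^{p-1}\cdot(u|\nabla\psi|)$ and applying Young's inequality with a small parameter, I can absorb a fraction of $\int\psi^p|\nabla u|^p$ into the left-hand side, arriving at the Caccioppoli estimate $\int\psi^p|\nabla u|^p\leq C_p\int u^p|\nabla\psi|^p$, with $C_p$ depending only on $p$.

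The geometry is then encoded entirely in the cutoff. I would take $\psi=\chi\,\eta_R$, a product of an inner cutoff $\chi$ (equal to $0$ on the neighborhood $\O_{\e/2}$, equal to $1$ outside $\O_\e$, with $|\nabla\chi|\leq C/\e$) and an outer cutoff $\eta_R$ (equal to $1$ on $B_R(o)$, equal to $0$ outside $B_{2R}(o)$, with $|\nabla\eta_R|\leq C/R$). Because $\chi$ vanishes near $\partial\O$, the support of $\psi$ lies in $M\setminus\bar\O$ and is relatively compact (here completeness of $M$ guarantees $B_{2R}(o)\Subset M$), so $\varphi$ is a legitimate test function. Since $\nabla\psi$ is supported on $(\O_\e\setminus\O_{\e/2})\cup(B_{2R}(o)\setminus B_R(o))$, the right-hand side splits into a term bounded by $(C/\e)^p\|u\|_{L^p(M\setminus\O)}^p$, uniform in $R$, and a term bounded by $(C/R)^p\|u\|_{L^p(M\setminus\O)}^p$, which tends to $0$ as $R\to+\infty$ thanks to the assumption $u\in L^p(M\setminus\O)$.

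Finally, letting $R\to+\infty$, the left-hand side $\int\chi^p\eta_R^p|\nabla u|^p$ increases to $\int\chi^p|\nabla u|^p$ by monotone convergence, and since $\chi\equiv 1$ on $M\setminus\O_\e$ this controls $\int_{M\setminus\O_\e}|\nabla u|^p$. The outcome is $\int_{M\setminus\O_\e}|\nabla u|^p\leq C_p(C/\e)^p\|u\|_{L^p(M\setminus\O)}^p<+\infty$, which is the assertion. The argument is largely routine; the points requiring care are the justification of $\varphi=u\psi^p$ as a test function (handled by density, using $u\in W^{1,p}_{loc}$) and the placement of the inner cutoff so that one never integrates $|\nabla u|^p$ up to $\partial\O$ — which is precisely why the conclusion is stated on $M\setminus\O_\e$ rather than on all of $M\setminus\bar\O$. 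The only quantitative subtlety is the Young's inequality absorption, which forces $C_p$ to depend on $p$ but presents no real difficulty.
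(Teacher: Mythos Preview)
Your proof is correct and follows the same approach as the paper: test the weak inequality with $\varphi=u\psi^{p}$, apply Young's inequality to absorb $\int\psi^{p}|\nabla u|^{p}$, and send $R\to+\infty$. The only cosmetic differences are that the paper uses a single cutoff $\varrho$ (supported in $B_{R+1}\setminus\Omega_{\e/2}$ and equal to $1$ on $B_{R}\setminus\Omega_{\e}$) rather than your product $\chi\,\eta_{R}$, and it retains the term $-\lambda\int\varrho^{p}u^{p}$ on the right-hand side rather than discarding it; your version is in fact slightly cleaner in tracking the separate $\e^{-p}$ and $R^{-p}$ contributions from $\nabla\psi$.
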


\begin{proof}
By definition, for every $0\leq\varphi\in W_0^{1,p}(M\setminus \bar {\Omega})$,
\[
-\int_{M\setminus \bar\Omega} |\nabla u|^{p-2}\langle \nabla u, \nabla \varphi\rangle \geq
\lambda \int_{M\setminus \bar\Omega} u^{p-1}\varphi.
\]
For every $R \gg 1$, let $\varrho = \varrho_{R}\in C^\infty_c(M)$ be a cutoff function satisfying
\[
i)\, \varrho = 0 \text{ outside } B_{R+1}\setminus \O_{\e/2};\quad ii)\, \varrho = 1 \, \text{in }B_{R} \setminus \O_{\e}, \quad iii)\, \| \nabla \varrho \|_{\infty} =  \CO(R^{-1}).
\]
Using $\varphi=\varrho^p u$ as a test function in the above inequality yields
\[
\int \varrho^p |\nabla u|^p \leq -\lambda \int \varrho^p u^p +p\int u \varrho^{p-1} |\nabla u|^{p-1}|\nabla \varrho|.
 \]
By Young's inequality, the integrand in the second integral on the RHS is estimated by
 \[
 \frac 1 {\d ^p} u^p + (p-1)\d^{p/(p-1)} \varrho^p |\nabla u|^p,
 \]
where $\d >0 $ is such that $(p-1)\d^{p/(p-1)}<1/2$. Inserting into the above inequality yields
 \[
 (1-(p-1)\d^{p/(p-1)})\int \varrho^p |\nabla u|^p \leq  -\lambda \int \varrho^p u^p
 + \d^{-p} \int|\nabla \varrho|^p u^p,
 \]
whence, using the properties of $\varrho$ and letting $R\to \infty$, we conclude that
\[
\int_{M\setminus \O_{\e}}|\nabla u|^p \leq C \e^{-p}\int_{M\setminus \O_{\e/2} }u^p<+\infty.
\]
\end{proof}

Thus, if we combine Proposition \ref{prop-w1p-comparison} with Lemma \ref{lemma-W1p-minimal} and Lemma \ref{lemma-w1p-sub} we conclude the validity of the following result.

\begin{theorem}\label{th-lp-comparison}
 Let $\O \Subset M$ be a smooth domain inside the complete manifold $(M,g)$. Let  $h \in C^{1,\alpha}_{loc}(M \setminus  \O)$ be the minimal solution of \eqref{p-problem} and let $u\in W^{1,p}_{loc}(M \setminus \bar \O)\cap C^{0}(M \setminus  \O)$ be a nonnegative function satisfying
 \[
 \Delta_{p}u \geq \l u ^{p-1}\, \text{on }M \setminus \bar \O.
 \]
If $u \in L^{p}(M \setminus \bar \O)$ then, for any $0 < \e  \ll 1$, it holds
\[
u \leq c \cdot h\, \text{ on }M \setminus \O_{\e},
\]
where $c  =\max_{\partial \O_{\e}} (u +\e)/h$.
In particular:
\begin{itemize}
 \item [i)] $u$ is bounded;
\item [ii)] $u \in W^{1,p}(M\setminus \O_{\e})$;
\item [iii)] $u(x) \to 0$ as $x \to \infty$ provided $(M,g)$ is $p$-Feller.
\end{itemize}
\end{theorem}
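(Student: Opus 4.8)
The plan is to derive the pointwise bound $u \le c\,h$ by applying the $W^{1,p}$ comparison principle of Proposition \ref{prop-w1p-comparison} on the unbounded exterior domain $D = M \setminus \bar\O_{\e}$, taking $v = c\,h$ as the ordered competitor. The two preparatory facts that legitimize this are exactly Lemma \ref{lemma-W1p-minimal} and Lemma \ref{lemma-w1p-sub}. The former gives $h \in W^{1,p}_{\varrho}(M\setminus\bar\O)$ with $\varrho(x)=\re^{Cr(x)}\ge 1$, and since the weight dominates the unweighted measure this forces $h \in W^{1,p}(M\setminus\bar\O)$; the latter, applied to the nonnegative subsolution $u$ with $u \in L^{p}(M\setminus\bar\O)$, yields $|\nabla u| \in L^{p}(M\setminus\O_{\e})$ and hence $u \in W^{1,p}(D)$. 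Together with the continuity hypotheses, this places both $u$ and $c\,h$ in the class $C^{0}(\bar D)\cap W^{1,p}(D)$ demanded by Proposition \ref{prop-w1p-comparison}. Here I would also note that one may take $\O_{\e}$ to be a smooth relatively compact domain with $\bar\O\subset\O_{\e}$ trapped inside the genuine $\e$-neighborhood, so that $\partial D=\partial\O_{\e}$ is smooth and the gradient integrability of Lemma \ref{lemma-w1p-sub} still applies on $M\setminus\O_{\e}$.

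First I would record the homogeneity of the equation: the $p$-Laplacian is $(p-1)$-homogeneous, so for any constant $c>0$ the function $c\,h$ still satisfies $\Delta_{p}(c\,h)=c^{p-1}\Delta_{p}h=\lambda c^{p-1}h^{p-1}=\lambda(c\,h)^{p-1}$, whence $\Delta_{p}(c\,h)-\lambda|c\,h|^{p-2}(c\,h)=0$ on $M\setminus\bar\O$. Since $u\ge 0$ obeys $\Delta_{p}u-\lambda|u|^{p-2}u=\Delta_{p}u-\lambda u^{p-1}\ge 0$, the differential inequality $\Delta_{p}u-\lambda|u|^{p-2}u\ge \Delta_{p}(c\,h)-\lambda|c\,h|^{p-2}(c\,h)$ holds throughout $D$. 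For the boundary condition I would choose $c=\max_{\partial\O_{\e}}(u+\e)/h$, which is finite and strictly positive because $h$ is continuous and positive on the compact set $\partial\O_{\e}$; this gives $c\,h\ge u+\e>u$ on $\partial D=\partial\O_{\e}$, in particular $u\le c\,h$ there. Proposition \ref{prop-w1p-comparison} then yields $u\le c\,h$ on all of $D=M\setminus\O_{\e}$.

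The three consequences follow immediately from this single inequality. For (i), Theorem \ref{th-basic}(a) gives $0<h<1$, so $u\le c\,h\le c$ outside $\O_{\e}$, while on the compact annular region $\overline{\O_{\e}}\setminus\O$ the continuous function $u$ is automatically bounded; hence $u$ is bounded on $M\setminus\O$. Conclusion (ii) is just the combination of the $L^{p}$ hypothesis with the gradient integrability furnished by Lemma \ref{lemma-w1p-sub}. For (iii), the $p$-Feller property means precisely that the minimal solution satisfies $h(x)\to 0$ as $x\to\infty$, so that $0\le u\le c\,h\to 0$ forces $u(x)\to 0$.

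I expect the only genuine subtlety to be the verification of \emph{global} (rather than merely local) $W^{1,p}$ membership of $u$ and $h$ on the unbounded domain $D$, since this is exactly what allows Proposition \ref{prop-w1p-comparison} to be invoked on an exterior domain instead of on relatively compact exhausting pieces. This is also the reason one is forced to pass from $\O$ to the slightly larger neighborhood $\O_{\e}$: the Caccioppoli estimate behind Lemma \ref{lemma-w1p-sub} controls $\int|\nabla u|^{p}$ only away from $\partial\O$, with a constant degenerating like $\e^{-p}$ as $\e\to 0$. Everything else is a formal consequence of the $(p-1)$-homogeneity of $\Delta_{p}$ together with the two lemmas.
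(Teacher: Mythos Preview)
Your proposal is correct and follows exactly the approach indicated in the paper, which simply states that the theorem is obtained by combining Proposition~\ref{prop-w1p-comparison} with Lemma~\ref{lemma-W1p-minimal} and Lemma~\ref{lemma-w1p-sub}. In fact you have supplied more detail than the paper itself gives: the verification of the $(p-1)$-homogeneity so that $c\,h$ remains a solution, the reason for passing to $\O_{\e}$, and the derivation of items i)--iii) are all made explicit in your write-up, whereas the paper leaves these to the reader.
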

 In the absence of any further geometric/stochastic assumption on $M$, Theorem \ref{th-lp-comparison} may represent the natural $L^{p}$ analogue of the $L^{\infty}$ comparison observed in \cite{PS-Feller, BPS-Spectral} which is valid for $p=2$ on a stochastically complete manifold; see Lemma \ref{lemma-Linfty-comparison} in Section \ref{section-sobolevreg} .
A natural question arises:
\begin{problem}\label{problem-lp}
 Is it possible to prove directly that, under suitable conditions on $M$, such as a Ricci lower bound or the $p$-analogue of the stochastic completeness (see Section \ref{section-sobolevreg}), any $L^{\infty}$-subsolution of the boundary value problem \eqref{p-problem} is in the $L^{p}$ class?
\end{problem}

Thanks to Theorem \ref{th-lp-comparison}, a positive answer to Problem \ref{problem-lp} would give a proof of the following ``Ansatz'', which  extends to any $1<p<+\infty$ the above mentioned $L^{\infty}$ comparison of \cite{PS-Feller, BPS-Spectral}. \smallskip

\begin{ansatz}\label{ansatz}
 Let $(M,g)$ be a ``suitable'' complete Riemannian manifold (e.g., whose Ricci curvature is bounded from below or $p$-stochastically complete in the sense of Definition \ref{def-pstochcompl} below) and let $\O \Subset M$ be a smooth domain. If $u \in C^{0}(M \setminus \O) \cap W^{1,p}_{loc}(M \setminus \bar \O)$ is a nonnegative and bounded function satisfying
 \[
 \Delta_{p} u \geq \l u^{p-1}\, \text{on } M \setminus \bar \O
 \]
 for some $\l >0$ then, there exist a smooth domain $\Omega_1$ and a constant $c = c(\O_{1})>0$  such that $\O \Subset \O_{1} \Subset M$  and
 \[
 u  \leq c \cdot h \quad M \setminus \O_{1}
 \]
 where $h \in C^{1}(M \setminus \O)$ is the minimal solution of \eqref{p-problem}.
 More generally, this comparison property holds when $h$ is replaced by positive supersolution.
\end{ansatz}

We shall see in Section \ref{section-rotational} that the answer to Problem \ref{problem-lp}  is yes if the underlying manifold is rotationally symmetric (but, in fact, the symmetry condition plays no role if $p=2$ as a consequence of Proposition \ref{th-characterization} of Section \ref{section-sobolevreg}).\smallskip

\subsection{Global Sobolev regularity of  subsolutions}\label{section-sobolevreg}

Apparently, the first connection between the Sobolev space $W^{1,2}(M)$ and the stochastic completeness of the underlying manifold $(M,g)$ was pointed out by A. Grigor'yan and J. Masamune in the recent paper \cite{GM-JMPA}. More precisely, in \cite[Theorem 1.7]{GM-JMPA} it is proved that if $M$ is stochastically complete then $C^{\infty}_{c}(M)$ is dense in $W^{1,2}(M)$. This result, in turn, has interesting implications to the essential self-adjointness of the Laplacian in non-complete settings, and to the validity of a global Stokes theorem for weakly regular functions. In this section we shall focus on geodesically complete manifolds and present a new characterization of  stochastic completeness which still involves the space $W^{1,2}(M)$.
Actually, we are going to extend the investigation to the more general setting of the $p$-Laplacian, $1<p<+\infty$. Following \cite{PRS-revista, MV-tams}, we record the following:

\begin{definition}\label{def-pstochcompl}
A Riemannian manifold $(M,g)$ is said to be $p$-stochastically complete if either of the following equivalent conditions is satisfied.
\begin{itemize}
\item [a)] (Liouville property) For some (hence any) $\lambda >0$, the only (weak) solution $u\in C^{0}(M)\cap W^{1,p}_{loc}(M)$ of the problem
\[
\begin{cases}
\Delta_p u \geq \lambda u^{p-1}, \,\, \text{on } M\\
0\leq u \leq \sup_M u  <+\infty
\end{cases}
\]
is the constant function $u \equiv 0$.
\item [b)] (Maximum principle at infinity) For every function $u\in C^{0}(M)\cap W^{1,p}_{loc}(M)$ satisfying $\sup_{M} u = u^{\ast} <+\infty$ and for every $\gamma< u^{\ast}$ it holds
\[
\inf_{\{u>\gamma\}} \Delta_p u  \leq 0.
\]
Here, the infimum has to be understood in the following sense: for any $\e>0$ there exists a test function $0 \leq \vp \in C^{\infty}_{c}(\{ u > \gamma\})$ such that $-\int_{M} \langle |\nabla u|^{p-2}\nabla u, \nabla \vp\rangle < \e$.
\end{itemize}
\end{definition}
It is the main contribution of \cite{PRS-stoch} that, when $p=2$, the above conditions (in fact condition b)) are equivalent to the the usual notion of stochastic completeness, i.e., that  Brownian motion on $M$ does not explode in finite time. On the geometric side, as a very special case of \cite[Corollary 2.4]{PRS-revista} we have that a complete manifold $(M,g)$ satisfying $\ric \geq (m-1) \kappa$, $\kappa\leq 0$ is $p$-stochastically complete, for every $1<p<+\infty$.\smallskip

For the sake of convenience, we also set the following
\begin{definition}
Let $1<p<+\infty$. We say that the Riemannian manifold $(M,g)$ satisfies the global Sobolev regularity property $(\SR_{p})$ if the following holds.\smallskip

\noindent $(\SR_{p})$ Let $\O_{0} \Subset M$ be a given domain, $\l>0$ a fixed real number and
\[
u \in W^{1,p}_{\mathrm{loc}}(M\setminus \bar \O_{0}) \cap C^{0}(M \setminus \O_{0})
\]
a nonnegative and bounded function satisfying the  differential inequality
\[
\Delta_{p} u \geq \l u^{p-1}, \,M \setminus \bar \O_{0}.
\]
Then there exists a smooth domain $\O_{0} \subseteq  \O_{1} \Subset M$ such that
\[
u \in W^{1,p}(M \setminus   \bar \O_{1}).
\]
\end{definition}

The main result of the present section is the next global Sobolev regularity result.

\begin{proposition}\label{th-characterization}
 Let $(M,g)$ be a geodesically complete manifold and let $1<p<+\infty$. Then:
\begin{itemize}
\item [(a)] If $p=2$ and $M$ is stochastically complete, then $M$ enjoys  $(\SR_{2})$.
\item [(b)] If $M$ satisfies property $(\SR_{p})$ then $M$ is $p$-stochastically complete.
\end{itemize}
In particular, for $p=2$, the global Sobolev regularity property $(\SR_{2})$ is equivalent to the stochastic completeness of the complete Riemannian manifold $(M,g)$.
\end{proposition}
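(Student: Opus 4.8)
The plan is to prove the two implications separately: the linear structure is used only in (a), while (b) works for every $1<p<+\infty$. In both cases the guiding idea is that global $W^{1,p}$-regularity of a subsolution and the Liouville characterization of $p$-stochastic completeness (Definition \ref{def-pstochcompl}(a)) are linked through the Caccioppoli-type cutoff estimate already exploited in Lemma \ref{lemma-w1p-sub}.

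For part (b) I would verify the Liouville property directly. Let $u\in C^{0}(M)\cap W^{1,p}_{\mathrm{loc}}(M)$ be bounded, nonnegative and satisfy $\Delta_{p}u\geq \l u^{p-1}$ on all of $M$. Fixing an arbitrary smooth domain $\O_{0}\Subset M$ and restricting, $u$ is admissible for property $(\SR_{p})$, which produces a smooth domain $\O_{1}\supseteq\O_{0}$ with $u\in W^{1,p}(M\setminus\bar\O_{1})$; since $\bar\O_{1}$ is compact and $u\in W^{1,p}_{\mathrm{loc}}(M)$, this upgrades to $u\in W^{1,p}(M)$, in particular $u\in L^{p}(M)$. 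I would then test the inequality against $\varrho_{R}^{p}u$, with the usual cutoff $\varrho_{R}$ ($\varrho_{R}\equiv 1$ on $B_{R}$, supported in $B_{2R}$, $\|\nabla\varrho_{R}\|_{\infty}\leq C/R$), and absorb the gradient cross-term by Young's inequality exactly as in Lemma \ref{lemma-w1p-sub}, obtaining
\[
\tfrac12\int \varrho_{R}^{p}|\nabla u|^{p}+\l\int \varrho_{R}^{p}u^{p}\ \leq\ \frac{C}{R^{p}}\int_{B_{2R}\setminus B_{R}}u^{p}.
\]
Because $u\in L^{p}(M)$ the right-hand side vanishes as $R\to\infty$, forcing $\int_{M}u^{p}=0$ and hence $u\equiv 0$. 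This step is essentially routine once global integrability is in hand.

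For part (a), with $p=2$, the aim is to reduce the conclusion to an $L^{2}$-bound near infinity, which Lemma \ref{lemma-w1p-sub} converts into gradient integrability. Concretely, I would fix a smooth domain $D$ with $\O_{0}\Subset D\Subset M$, let $h$ be the minimal solution of \eqref{p-problem} on $M\setminus\bar D$ (so $h=1$ on $\partial D$), and set $c=\max_{\partial D}u$, which is finite since $u\in C^{0}(M\setminus\O_{0})$. The heart of the argument—and the only place stochastic completeness is used—is the $L^{\infty}$-comparison $u\leq c\,h$ on $M\setminus D$. By linearity, $w:=u-c\,h$ satisfies $\Delta w\geq \l w$ on $M\setminus\bar D$, is bounded above, and is $\leq 0$ on $\partial D$; extending $W:=\max(w,0)$ by zero across $\bar D$ yields a bounded nonnegative function on all of $M$ which is again a weak subsolution of $\Delta W\geq \l W$. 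Invoking the Liouville characterization (Definition \ref{def-pstochcompl}(a) with $p=2$) then gives $W\equiv 0$, i.e. $u\leq c\,h$. The hard part, to be checked carefully, is precisely that $W$ is a \emph{global} weak subsolution: this uses the continuity of $u$ together with the fact that the constant $0$ is itself a subsolution (the nonlinearity $t\mapsto \l t$ vanishes at $t=0$), so that the maximum glues across the free boundary and across $\partial D$ into a single admissible competitor for the Liouville property.

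Granting the comparison, the conclusion of (a) follows quickly: Lemma \ref{lemma-W1p-minimal} (with $p=2$) gives $h\in W^{1,2}_{\varrho}(M\setminus\bar D)\subseteq L^{2}(M\setminus\bar D)$, since the weight $e^{Cr}\geq 1$ only helps, whence $0\leq u\leq c\,h$ forces $u\in L^{2}(M\setminus D)$; Lemma \ref{lemma-w1p-sub} then yields $|\nabla u|\in L^{2}(M\setminus D_{\e})$ for every $\e>0$, so that $u\in W^{1,2}(M\setminus\bar D_{\e})$, and taking any smooth domain $\O_{1}\supseteq D_{\e}\supseteq\O_{0}$ establishes $(\SR_{2})$. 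Finally, the ``in particular'' statement is obtained by combining (a) and (b) at $p=2$ with the fact, recalled in the text, that $2$-stochastic completeness coincides with the classical stochastic completeness, giving the announced equivalence.
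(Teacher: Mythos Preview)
Your strategy matches the paper's: part (a) via an $L^\infty$-comparison $u\leq c\,h$ against the minimal solution (packaged in the paper as Lemma~\ref{lemma-Linfty-comparison}) followed by Lemmas~\ref{lemma-W1p-minimal} and~\ref{lemma-w1p-sub}; part (b) via the Liouville characterization. Your treatment of (b) spells out the cutoff argument directly, whereas the paper invokes a Yau-type $L^{p}$-Liouville theorem (or, alternatively, Gaffney's Stokes theorem applied to $X=u|\nabla u|^{p-2}\nabla u$); the content is the same.

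There is one technical point in (a) you have flagged but not resolved. With $c=\max_{\partial D}u$, the function $w=u-c\,h$ actually \emph{attains} $0$ on $\partial D$, so $\{W>0\}$ may accumulate on $\partial D$; then showing that the zero-extension of $W=\max(w,0)$ is in $W^{1,2}_{\mathrm{loc}}(M)$ and satisfies the weak inequality against test functions not vanishing on $\partial D$ requires a genuine gluing argument. The paper sidesteps this cleanly with an $\e$-trick: for each $\e>0$ set $w_{\e}=\max(u-c\,h-\e,0)$. Since $u-c\,h\leq 0$ on $\partial D$, continuity forces $w_{\e}\equiv 0$ on a full neighborhood of $\partial D$, so the zero-extension is trivially in $W^{1,2}_{\mathrm{loc}}(M)$ and the global subsolution property is immediate. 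Stochastic completeness then gives $w_{\e}\equiv 0$, i.e.\ $u\leq c\,h+\e$, and letting $\e\searrow 0$ yields the comparison. With this adjustment your argument is complete; note also that the intermediate domain $D\Supset\O_{0}$ is not needed, since by hypothesis $u\in C^{0}(M\setminus\O_{0})$ is already continuous up to $\partial\O_{0}$, and the paper works there directly.
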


\begin{remark}
We shall see in Section \ref{section-rotational} that, in the special case of model manifolds, the equivalence between $(\SR_{p})$ and  the $p$-stochastic completeness of the underlying manifold holds on the whole $L^{p}$ scale, $1<p<+\infty$.
\end{remark}
\begin{remark}
 As it will be apparent from the proof, in the case where $p=2$, we are able to prove that property $(\SR_{2})$ holds true with $\O_{0} = \O_{1}$ (and without the assumption that $M$ is geodesically complete). In particular $|\nabla u|$ is in $L^{2}$ even in a neighborhood of $\partial \O_{0}$. In the attempt of extending the analysis to all $1< p < +\infty$, perhaps via different tools such as global $L^{p}$ comparisons, we are willing to weaken the conclusion by assuming that $\nabla u$ is in $L^{p}$ at infinity. As the previous remark shows, this is a good strategy  in the setting of model manifolds.
\end{remark}

We need the next simple $L^{\infty}$ comparison that was systematically used in [\cite{PS-Feller, BPS-Spectral}]. In these papers the comparison property was not stated as a formal result so it is difficult to quote it appropriately.  We take the occasion to state it here formally in a slightly more general version.

\begin{lemma}[\cite{PS-Feller, BPS-Spectral}]\label{lemma-Linfty-comparison}
Let $(M,g)$ be a stochastically complete manifold and let $D \subset M$ be a (possibly unbounded) domain. Let $u,v \in C^{0}(\bar D) \cap W^{1,2}_{loc}(D)$ be  solutions of
\[
\begin{cases}
 \Delta u - \lambda u \geq  \Delta v - \lambda v, & D \\
 u \leq v, & \partial D \\
\end{cases}
\]
for some $\l >0$. If
\[
\sup_{D} (u-v) <+\infty
\]
then
 \[
 u  \leq v \quad D.
 \]
\end{lemma}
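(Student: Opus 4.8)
The plan is to reduce the comparison on the (possibly unbounded) domain $D$ to a global Liouville-type statement on all of $M$, which is exactly where stochastic completeness will enter. Set $w=u-v\in C^{0}(\bar D)\cap W^{1,2}_{loc}(D)$. Subtracting the two weak inequalities and using the linearity of $\Delta-\lambda$, I get that $w$ is a weak subsolution,
\[
\Delta w\geq \lambda w\quad\text{on }D,
\]
while the boundary condition becomes $w\leq 0$ on $\partial D$ and, by hypothesis, $w^{\ast}:=\sup_{D}w<+\infty$. Since the desired conclusion $u\leq v$ is equivalent to $w\leq 0$, the whole problem is now phrased for a single bounded-above subsolution vanishing at the boundary.

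The first construction I would carry out is to extend the positive part of $w$ to the whole manifold. Define $\phi=w_{+}$ on $D$ and $\phi\equiv 0$ on $M\setminus D$. Because $w$ is continuous up to $\partial D$ and $w\leq 0$ there, the function $w_{+}$ vanishes on $\partial D$, so $\phi\in C^{0}(M)$. Moreover, for each $\delta>0$ the set $\{w\geq\delta\}$ is closed in $M$ and contained in $D$ (a limit point lying on $\partial D$ would force both $w\leq 0$ and $w\geq\delta$), which yields $\phi\in W^{1,2}_{loc}(M)$ with $\nabla\phi=\mathbf{1}_{\{w>0\}}\nabla w$. Clearly $0\leq \phi\leq\max(w^{\ast},0)<+\infty$.

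The key step is to verify that $\phi$ is a \emph{global} weak subsolution, $\Delta\phi\geq\lambda\phi$ on all of $M$. Fix $0\leq\psi\in C^{\infty}_{c}(M)$ and a smooth non-decreasing $\eta\colon\rr\to[0,1]$ with $\eta\equiv 0$ on $(-\infty,\delta]$ and $\eta\equiv 1$ on $[2\delta,+\infty)$. Then $\psi\,\eta(w)$ is a nonnegative function whose support lies in $\{w\geq\delta\}\cap\supp\psi\Subset D$, hence it is an admissible test function for $\Delta w\geq\lambda w$ on $D$. Expanding $\nabla\bigl(\psi\,\eta(w)\bigr)=\eta(w)\nabla\psi+\psi\,\eta'(w)\nabla w$ and discarding the nonnegative term $\int\psi\,\eta'(w)|\nabla w|^{2}\geq 0$ gives
\[
-\int_{M}\eta(w)\,\langle\nabla w,\nabla\psi\rangle\ \geq\ \lambda\int_{M} w\,\psi\,\eta(w).
\]
Letting $\delta\to 0$, so that $\eta(w)\to\mathbf{1}_{\{w>0\}}$ pointwise, and applying dominated convergence (the integrands being dominated by $|\langle\nabla w,\nabla\psi\rangle|\in L^{1}(\supp\psi)$ and $|w|\,\psi$), I obtain $-\int_{M}\langle\nabla\phi,\nabla\psi\rangle\geq\lambda\int_{M}\phi\psi$, i.e. $\Delta\phi\geq\lambda\phi$ weakly on $M$.

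At this point $\phi$ is a nonnegative, bounded, global weak subsolution of $\Delta\phi\geq\lambda\phi$, so the Liouville characterization of stochastic completeness for $p=2$ (Definition \ref{def-pstochcompl}(a), equivalent to the usual probabilistic notion by \cite{PRS-stoch}; one could equally invoke the maximum principle at infinity of Definition \ref{def-pstochcompl}(b)) forces $\phi\equiv 0$. Hence $w_{+}\equiv 0$, that is $w\leq 0$, and therefore $u\leq v$ on $D$. I expect the main obstacle to be precisely the third step: since $D$ may be unbounded one cannot restrict a global test function directly, and the gluing of $w_{+}$ with the trivial extension across $\partial D$ must be handled through the cutoff $\eta(w)$, which simultaneously localizes away from the boundary (where $w\leq 0$) and, in the limit $\delta\to 0$, reproduces the positive part while controlling the sign of the discarded gradient term.
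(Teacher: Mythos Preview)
Your overall strategy coincides with the paper's: extend (a truncation of) $w=u-v$ by zero to a bounded nonnegative global subsolution of $\Delta-\lambda$ and invoke the Liouville characterization of stochastic completeness. The execution, however, has a gap at exactly the point you flag as delicate. You assert that $\phi=w_{+}$ extended by zero lies in $W^{1,2}_{loc}(M)$, arguing from the fact that each $\{w\geq\delta\}$ is closed in $M$ and contained in $D$; and you justify the passage $\delta\to 0$ by dominated convergence with majorant $|\langle\nabla w,\nabla\psi\rangle|\in L^{1}(\supp\psi)$. Neither step is justified as stated: since $u,v$ are only in $W^{1,2}_{loc}(D)$ (not up to $\partial D$), and the open set $\{w>0\}$ may accumulate on $\partial D$ (only the superlevel sets $\{w\geq\delta\}$ with $\delta>0$ stay inside), one does not know a priori that $\mathbf{1}_{\{w>0\}}|\nabla w|$ is locally $L^{2}$ across $\partial D$, nor that $|\langle\nabla w,\nabla\psi\rangle|$ is integrable on a compact set meeting $\partial D$.

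The paper sidesteps this with a one-line device: work with $w_{\epsilon}=\max(u-v-\epsilon,0)$ for $\epsilon>0$. Then $\supp w_{\epsilon}\subset\{w\geq\epsilon\}$ is closed in $M$ and contained in $D$, so the zero extension is trivially in $C^{0}(M)\cap W^{1,2}_{loc}(M)$ and satisfies $\Delta w_{\epsilon}\geq\lambda w_{\epsilon}$ on all of $M$; stochastic completeness forces $w_{\epsilon}\equiv 0$, i.e.\ $u\leq v+\epsilon$, and one lets $\epsilon\searrow 0$. Your argument can be repaired either by inserting this $\epsilon$-shift, or by first deriving a Caccioppoli estimate (test $\Delta w\geq\lambda w$ with $\psi^{2}(w-\delta)_{+}$ to get $\int\psi^{2}\mathbf{1}_{\{w>\delta\}}|\nabla w|^{2}\leq 4\int(w-\delta)_{+}^{2}|\nabla\psi|^{2}\leq 4(w^{\ast})^{2}\int|\nabla\psi|^{2}$, uniformly in $\delta$), which then legitimizes both the $W^{1,2}_{loc}$ claim for $\phi$ and the limit.
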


\begin{proof}
 Take any $\e>0$ and consider the function $w_{\e} = \max(u - v -\e,0)$. Note that $w_{\e} \geq 0$ is a bounded solution of $\Delta w_{\e} \geq \l w_{\e}$ on $D$. Since $w_{\e}$ is supported inside $D$ we can extend it to a bounded solution of the same inequality on all of $M$. By stochastic completeness, $w_{\e} \equiv 0$ proving that $u \leq v + \e$. To conclude, we  let $\e \searrow 0$.
\end{proof}

We are in the position to give the

\begin{proof}[Proof of Proposition \ref{th-characterization}]
 (a)  Assume that $M$ is stochastically complete and let $\O_{0},\l$ and $u$ be as in $(\SR_{2})$. Let $0 \leq h \leq 1$ be the minimal positive solution of the problem
 \[
 \begin{cases}
 \Delta h  = \l h, \,M \setminus \bar \O_{0} \\
 h = 1,\, \partial \O_{0}
\end{cases}
 \]
 By Lemma \ref{lemma-Linfty-comparison} $u \leq c h$ on $M \setminus \O_{0}$, for some constant $c>0$. The desired integrability properties of $u$ and $|\nabla u|$ now  follow from Lemma \ref{lemma-W1p-minimal} and Lemma \ref{lemma-w1p-sub}. This completes the proof of part (a).\\

(b) Conversely, assume that $(\SR_{p})$ HOLDS and suppose by contradiction that $M$ is not $p$-stochastically complete. Then, there exists a function $u \in C^{0}(M) \cap W^{1,p}_{loc}(M)$ satisfying
 \[
 \begin{cases}
 \Delta_{p} u  \geq \l u^{p-1}, \,M \\
0 \leq u \leq \sup_{M} u < +\infty,
\end{cases}
 \]
By $(\SR_{p})$ we know that $u \in W^{1,p}(M)$ (but $L^{p}$ suffices). Since the volume of $M$ is necessarily infinite, the $p$-version of the classical Liouville theorem by Yau, see e.g. \cite[Theorem 2.30]{PS-Ensaios}, yields $u \equiv 0$. The contradiction completes the proof. Alternatively, one can use the Stokes theorem by Gaffney applied to the vector field $X = u |\nabla u|^{p-2} \nabla u$. In fact, note that $|X| \leq \frac{1}{p}|u|^{p}+ \frac{p-1}{p}|\nabla u|^{p} \in L^{1}(M)$ and $\div X = |\nabla u|^{p}+ \l u^{p} \in L^{1}(M)$.
\end{proof}

\section{Global comparisons and Sobolev regularity: model manifolds}\label{section-rotational}
In this Section we give a proof of  (more general versions of) Theorem \ref{th-main1} and Theorem \ref{th-main2} stated in the Introduction. According to the discussions in Section \ref{section-global-generic}, everything boils down to the following key result.
\begin{theorem}\label{th-Lpmodel}
 Let $\mm^{m}_{\s}$ be a $p$-stochastically complete model manifold, $1<p<+\infty$. Let $u \in W^{1,p}_{loc}(\mm^{m}_{\s} \setminus \bar B_{R}(0)) \cap C^{0}(\mm^{m}_{\s} \setminus B_{R}(0))$ be a nonnegative and bounded solution of
 \[
 \Delta_{p} u \geq \l u^{p-1},\, \mm^{m}_{\s} \setminus \bar B_{R}(0)
 \]
 for some $R>0$ and $\l>0$. Then
 \[
 u \in L^{q}(\mm^{m}_{\s} \setminus \bar B_{R}(0)),
 \]
 for every $q \in [p-1 , +\infty]$.
\end{theorem}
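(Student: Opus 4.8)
The plan is to dominate $u$ by a bounded \emph{radial} solution of the same equation which, thanks to $p$-stochastic completeness, decays at infinity and is therefore $L^{p-1}$-integrable. Since $u$ is bounded, set $M_{0}=\sup_{\mm^{m}_{\s}\setminus B_{R}(0)}u$; using $u^{q}\leq M_{0}^{q-(p-1)}u^{p-1}$ for $p-1\leq q<+\infty$ (and the boundedness assumption for $q=+\infty$), it suffices to prove $u\in L^{p-1}(\mm^{m}_{\s}\setminus \bar B_{R}(0))$. To construct the dominating function I would fix an exhaustion $B_{R}(0)\Subset B_{R_{n}}(0)\nearrow \mm^{m}_{\s}$ and, on each annulus $A_{n}=B_{R_{n}}(0)\setminus \bar B_{R}(0)$, solve $\Delta_{p}z_{n}=\l z_{n}^{p-1}$ with boundary datum $M_{0}$ on \emph{both} components of $\partial A_{n}$, exactly as in the construction of Theorem \ref{th-basic}(a) (after dividing by $M_{0}$ the data equal $1$). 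Because metric, operator, domain and boundary data are all rotationally symmetric, the symmetrization argument of Theorem \ref{lemma-symmetric} applies verbatim and each $z_{n}$ is radial with $0<z_{n}\leq M_{0}$.

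Since $u$ is a subsolution and $u\leq M_{0}=z_{n}$ on $\partial A_{n}$, the comparison principle on the bounded domain $A_{n}$ (Remark \ref{rmk-w1p-comparison}, applied to $\Delta_{p}w-\l w^{p-1}$) gives $u\leq z_{n}$ on $A_{n}$. Comparing $z_{n+1}$ with $z_{n}$ on $A_{n}$ (same inner datum, while $z_{n+1}\leq M_{0}=z_{n}$ on $\partial B_{R_{n}}(0)$) yields $z_{n+1}\leq z_{n}$, so $z_{n}\searrow z_{\infty}$, a radial solution on $\mm^{m}_{\s}\setminus \bar B_{R}(0)$ with $0\leq z_{\infty}\leq M_{0}$, $z_{\infty}=M_{0}$ on $\partial B_{R}(0)$, and $u\leq z_{\infty}$ everywhere. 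The endpoint $C^{1}$-convergence needed to pass $z_{n}\to z_{\infty}$ to the radial ODE follows from the local Schauder estimates already used in Theorem \ref{th-basic}.

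The heart of the matter is to show $z_{\infty}(r)\to 0$ as $r\to+\infty$. Writing $w(r)=\s^{m-1}(r)|z_{\infty}'|^{p-2}z_{\infty}'$, the radial equation reads $w'=\l\,\s^{m-1}z_{\infty}^{p-1}\geq 0$, so $w$ is nondecreasing and $z_{\infty}$ has at most one interior critical point. I would rule out $\liminf_{r\to\infty}z_{\infty}=L>0$: on a half-line where $z_{\infty}\geq L$ one gets $w(r)\gtrsim \int^{r}\s^{m-1}$, whence $|z_{\infty}'|\gtrsim \big(\s^{-(m-1)}(r)\int^{r}\s^{m-1}\big)^{1/(p-1)}$, and integrating against the model characterization of $p$-stochastic completeness, namely $\int^{+\infty}\big(\s^{-(m-1)}(r)\int_{0}^{r}\s^{m-1}\big)^{1/(p-1)}\,dr=+\infty$, forces $z_{\infty}$ to leave every bounded set, contradicting $0\leq z_{\infty}\leq M_{0}$. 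Hence $z_{\infty}$ is eventually monotone decreasing with $z_{\infty}\to 0$, so $w<0$ is bounded, and integrating $w'=\l\s^{m-1}z_{\infty}^{p-1}$ over $[R,+\infty)$ gives $\l\int_{R}^{+\infty}\s^{m-1}z_{\infty}^{p-1}\,dr\leq |w(R)|<+\infty$. As $dv\simeq \s^{m-1}\,dr$, this is exactly $z_{\infty}\in L^{p-1}$, and with $u\leq z_{\infty}\leq M_{0}$ the claim follows from the reduction in the first paragraph.

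The main obstacle is this decay step, i.e. extracting quantitative information from $p$-stochastic completeness. The clean route is the Khas'minskii-type integral criterion above, which must be matched with the abstract Liouville / maximum-principle-at-infinity formulation of Definition \ref{def-pstochcompl}; alternatively one argues intrinsically by assuming $z_{\infty}\to L>0$ and setting $v=z_{\infty}-L\geq 0$, which by translation invariance of the gradient still satisfies $\Delta_{p}v=\l(v+L)^{p-1}\geq \l v^{p-1}$ and tends to $0$, then gluing $v$ across $\partial B_{R}(0)$ to a global nonnegative, nonzero, bounded subsolution and invoking Definition \ref{def-pstochcompl}(a); the delicate point is to perform the gluing without producing the wrong sign in the distributional $\Delta_{p}$ along $\partial B_{R}(0)$. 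Once $z_{\infty}\in L^{p-1}$ is in hand, I would also observe that a posteriori $u\in L^{p}$, so Theorem \ref{th-lp-comparison} becomes applicable and recovers the genuine $L^{\infty}$ comparison $u\leq c\,h$ with the minimal solution, closing the loop with Section \ref{section-global-generic}.
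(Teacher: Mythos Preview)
Your proposal is correct and follows essentially the same strategy as the paper: build a radial barrier by solving Dirichlet problems on expanding annuli (invoking the symmetrization of Theorem~\ref{lemma-symmetric}), pass to a limiting bounded radial solution dominating $u$, and analyze the resulting first-order ODE for $w=\s^{m-1}|z'|^{p-2}z'$ via the integral characterization of $p$-stochastic completeness to extract $L^{p-1}$-integrability. The paper's differences are cosmetic---it prescribes $M_{n}=\max_{\partial B_{R+n}}u$ on the outer sphere (so the limit is taken by compactness rather than by your monotone argument) and packages the ODE step as the standalone Lemma~\ref{lemma-radialsolution} showing directly that any bounded nonnegative radial solution is nonincreasing and hence $L^{p-1}$---and your worry about connecting Definition~\ref{def-pstochcompl} with the Khas'minskii-type integral criterion is unnecessary, since this equivalence on models is quoted from \cite{PRS-revista} immediately before that lemma.
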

The proof of Theorem \ref{th-Lpmodel} is obtained in two steps: first, making use of a volume growth characterization of the $p$-stochastic completeness in model manifolds, we study the case of radial solutions of the exterior problem and show that they are in $L^{q}$. Next, we compare the general subsolution with the radial solution.\smallskip

According to this strategy we start by recalling the following fact from \cite{PRS-revista}.
\begin{lemma}
Let $1<p<+\infty$. A necessary and sufficient condition for the complete model manifold $\mm^{m}_{\s}$ to be $p$-stochastically complete is that
\begin{equation}\label{pstoch-models}
\left(\frac{\int_{0}^{r}\s^{m-1}(t)dt}{\s^{m-1}(r)}\right)^{\frac{1}{p-1}} \not\in L^{1}(+\infty)
\end{equation}
\end{lemma}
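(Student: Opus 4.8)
The plan is to reduce the whole question to the radial ordinary differential operator associated with $\Delta_{p}$ and to read off the boundedness of its solutions from the integrability of the function
\[
G(r)=\left(\frac{\int_{0}^{r}\s^{m-1}(t)\,dt}{\s^{m-1}(r)}\right)^{\frac{1}{p-1}}.
\]
On $\mm^{m}_{\s}$ a radial function $w=w(r)$ satisfies $\Delta_{p}w=\s^{-(m-1)}\big(\s^{m-1}|w'|^{p-2}w'\big)'$, so the equation $\Delta_{p}w=\l w^{p-1}$ becomes the singular ODE $\big(\s^{m-1}(w')^{p-1}\big)'=\l\,\s^{m-1}w^{p-1}$; for an increasing solution with $w'(0)=0$ this is equivalent to the integral equation $w(r)=w(0)+\int_{0}^{r}\big(\l\,\s^{1-m}(s)\int_{0}^{s}\s^{m-1}w^{p-1}\big)^{1/(p-1)}\,ds$. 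I would treat the two implications separately, appealing to condition (a) (the Liouville property) and condition (b) (the maximum principle at infinity) of Definition \ref{def-pstochcompl}.

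For the necessity of \eqref{pstoch-models} I argue by contradiction: assuming $G\in L^{1}(+\infty)$, I produce a nonconstant bounded radial solution, which violates the Liouville property. A standard contraction argument applied to the integral equation yields, for any prescribed value $w(0)=a>0$, a positive increasing local solution. Since $w$ is increasing one has $w(t)\le w(r)$ for $t\le r$, whence
\[
(w'(r))^{p-1}=\frac{\l\int_{0}^{r}\s^{m-1}w^{p-1}}{\s^{m-1}(r)}\le \l\,w(r)^{p-1}\,G(r)^{p-1},
\]
that is $(\log w)'\le \l^{1/(p-1)}G$. Integrating gives $w(r)\le a\exp\big(\l^{1/(p-1)}\|G\|_{L^{1}(+\infty)}\big)$, so $w$ stays bounded and hence extends to all of $[0,+\infty)$; the condition $w'(0)=0$ together with the hypotheses on $\s$ guarantees that $w$ defines a genuine bounded, nonconstant, positive solution on the whole of $\mm^{m}_{\s}$, contradicting $p$-stochastic completeness.

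For the sufficiency I assume $G\notin L^{1}(+\infty)$ and exhibit a Khas'minskii potential. The key point is that the explicit radial function $\gamma(r)=\int_{r_{0}}^{r}G(t)\,dt$ satisfies $\gamma'=G$, whence $\s^{m-1}(\gamma')^{p-1}=\int_{0}^{r}\s^{m-1}$ and therefore $\Delta_{p}\gamma\equiv 1$; moreover $\gamma(r)\to+\infty$ precisely because $G\notin L^{1}(+\infty)$. Since $\gamma$ diverges, there is $r_{1}$ with $\l\gamma^{p-1}\ge 1=\Delta_{p}\gamma$ for $r\ge r_{1}$, so $\gamma$ is a radial supersolution of $\Delta_{p}w=\l w^{p-1}$ that tends to infinity. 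Feeding this potential into the Khas'minskii test for the maximum principle at infinity, in the form established in \cite{PRS-revista}, gives the $p$-stochastic completeness of $\mm^{m}_{\s}$.

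The routine part is the necessity direction, which is pure ODE analysis. The delicate point is sufficiency: the given bounded function in the Liouville/maximum-principle formulation need not be radial, and, as already noted for the Feller property, radialization does not preserve the differential inequality when $p\neq 2$. The passage from the explicit radial barrier $\gamma$ to a conclusion about an arbitrary bounded solution must therefore go through the comparison principle of Proposition \ref{prop-w1p-comparison} applied on an exhaustion by annuli, letting the outer radius tend to infinity; this comparison step, which is exactly where the Khas'minskii criterion does its work, is the main obstacle.
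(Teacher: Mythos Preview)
The paper itself does not prove this lemma; it merely recalls it from \cite{PRS-revista} without argument. Your sketch supplies a correct outline of the standard argument from that reference: necessity by building a bounded nontrivial radial solution of $\Delta_{p}w=\lambda w^{p-1}$ when $G\in L^{1}(+\infty)$, and sufficiency by exhibiting the explicit Khas'minskii potential $\gamma(r)=\int_{r_{0}}^{r}G$, which satisfies $\Delta_{p}\gamma\equiv 1$ and diverges, followed by the Khas'minskii test.

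Two small points. First, the a priori bound in the necessity step actually needs $\int_{0}^{\infty}G<\infty$, not merely the tail integral; but since $\sigma(t)\sim t$ near the origin one has $G(r)\asymp r^{1/(p-1)}$ there, so this is automatic and the issue is only notational. Second, in your final paragraph you point to Proposition~\ref{prop-w1p-comparison} as the relevant comparison tool. That is not quite right: Proposition~\ref{prop-w1p-comparison} requires global $W^{1,p}$ membership of both functions, which is precisely what one does \emph{not} know for an arbitrary bounded subsolution $u$. The Khas'minskii argument in \cite{PRS-revista} instead uses comparison on \emph{bounded} annuli (where the elementary version of Remark~\ref{rmk-w1p-comparison} applies), exploiting $\gamma\to+\infty$ to dominate $u$ on the outer boundary and then letting the outer radius tend to infinity. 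Since you already defer this step to \cite{PRS-revista}, your outline stands as written.
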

\begin{remark}
 As in the linear case, every $p$-parabolic manifold is $p$-stochastically complete. In particular, \eqref{pstoch-models} is implied by the validity of \eqref{model-parab}.
\end{remark}
The following result greatly extends and strengthen \cite[Lemma 4.2]{PS-Feller}.
\begin{lemma}\label{lemma-radialsolution}
 Let $\mm^{m}_{\s}$ be a $p$-stochastically complete model, $1<p<+\infty$. Let $u(x) = u(r(x))$ be a nonnegative and bounded  $C^{1}$ function on  $\mm^{m}_{\s} \setminus  B_{R}(0)$ that satisfies, in the weak sense, the equation
\begin{equation}
\label{p-eq-radial}
  \Delta_{p} u = \l u^{p-1}, \text{ on } \mm^{m}_{\s}\setminus \bar B_{R}(0)
\end{equation}
for some $\l >0$. Then:
\begin{itemize}
 \item [(a)] $u(r)$ is  nonincreasing  on $[R, +\infty)$.
 \item [(b)] $u \in L^{q}(\mm^{m}_{\s}\setminus  B_{R}(0))$, for every $q \in [p-1,+\infty]$.
\end{itemize}
\end{lemma}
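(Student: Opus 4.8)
The plan is to reduce everything to the first-order ODE satisfied by the radial profile and then to exploit the sign of its right-hand side. Since $u=u(r)$ is a radial $C^{1}$ weak solution of \eqref{p-eq-radial} on the model $\mm^{m}_{\s}$, using $dv=\s^{m-1}(r)\,dr\,d\theta$ and $|\nabla u|=|u'|$ the equation reduces, in the weak sense, to
\[
\frac{d}{dr}\Big(\s^{m-1}(r)\,|u'|^{p-2}u'\Big)=\l\,\s^{m-1}(r)\,u^{p-1}\qquad\text{on }(R,+\infty).
\]
Because $u\in C^{0}$, the right-hand side is continuous, so the flux $\Phi(r):=\s^{m-1}(r)\,|u'|^{p-2}u'$ is $C^{1}$ with $\Phi'=\l\,\s^{m-1}u^{p-1}\ge 0$. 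The first key observation is therefore that $\Phi$ is nondecreasing, and that $\Phi(r)$ has the same sign as $u'(r)$, since $\s^{m-1}>0$ and $t\mapsto|t|^{p-2}t$ is sign-preserving.

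For part (a) I would argue by contradiction. Suppose $\Phi(r_{1})>0$ for some $r_{1}>R$. Monotonicity then forces $\Phi\ge\Phi(r_{1})>0$, hence $u'>0$, on $[r_{1},+\infty)$, so $u$ is strictly increasing there; being bounded and nonnegative it converges to some $L>0$, and one can fix $r_{2}\ge r_{1}$ with $u^{p-1}\ge(L/2)^{p-1}=:c_{2}>0$ beyond $r_{2}$. Integrating $\Phi'=\l\,\s^{m-1}u^{p-1}$ from $r_{2}$ gives $\Phi(S)\ge\l c_{2}\int_{r_{2}}^{S}\s^{m-1}$; solving for $u'$ and integrating once more yields
\[
u(T)-u(r_{2})\ \ge\ (\l c_{2})^{\frac{1}{p-1}}\int_{r_{2}}^{T}\left(\frac{\int_{r_{2}}^{S}\s^{m-1}(t)\,dt}{\s^{m-1}(S)}\right)^{\frac{1}{p-1}}dS.
\]
The \emph{hard point} is to recognize the integrand on the right as a constant multiple of the one in the $p$-stochastic completeness criterion \eqref{pstoch-models}: comparing $\int_{r_{2}}^{S}\s^{m-1}$ with $\int_{0}^{S}\s^{m-1}$ — treating separately the cases $\int_{0}^{\infty}\s^{m-1}=+\infty$ and $\int_{0}^{\infty}\s^{m-1}<+\infty$ — shows that for large $S$ these two quantities are comparable up to a fixed positive constant. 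Hence the right-hand side diverges as $T\to+\infty$ by \eqref{pstoch-models}, contradicting the boundedness of $u$. Therefore $\Phi\le 0$ everywhere, i.e. $u'\le 0$, which is exactly the monotonicity claimed in (a).

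Part (b) is then quick, and it is where the sign gained above pays off. Since $u'\le 0$ we have $\Phi\le 0$, while $\Phi$ is nondecreasing; thus $\Phi(S)$ increases to a finite limit $\Phi_{\infty}\le 0$ as $S\to+\infty$. Integrating the ODE and letting $S\to+\infty$ gives
\[
\l\int_{R}^{+\infty}\s^{m-1}(r)\,u^{p-1}(r)\,dr=\Phi_{\infty}-\Phi(R)<+\infty,
\]
which means $u^{p-1}$ is integrable over $\mm^{m}_{\s}\setminus B_{R}(0)$, i.e. $u\in L^{p-1}$. For any $q\in(p-1,+\infty)$ I would then bound $u^{q}\le(\sup u)^{\,q-(p-1)}u^{p-1}$ and integrate, invoking the boundedness hypothesis, while $q=+\infty$ is precisely that boundedness. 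I expect the only genuinely delicate step to be the curvature-free growth estimate in (a) and its matching with \eqref{pstoch-models}; everything in (b) is just the fundamental theorem of calculus applied to the monotone, nonpositive flux $\Phi$.
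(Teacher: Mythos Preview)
Your proof is correct and follows essentially the same route as the paper's: both reduce to the ODE for the flux $\Phi=\s^{m-1}|u'|^{p-2}u'$, use its monotonicity together with \eqref{pstoch-models} to rule out $u'>0$, and then extract $L^{p-1}$-integrability from the resulting boundedness of $\Phi$. Your presentation is in fact marginally cleaner: you note directly that $\Phi\in C^{1}$ on all of $(R,+\infty)$ (the paper works only on $\{u'\neq 0\}$), and in (b) you integrate the ODE pointwise rather than passing through the weak formulation with cutoffs as the paper does.
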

\begin{remark}
An inspection of the proof shows that everything works for a bounded solution of the more general equation
\[
\Delta_{p} u = g(u), \text{ on } \mm^{m}_{\s}\setminus \bar B_{R}(0)
\]
where $g: \rr_{\geq 0} \to \rr_{\geq 0}$ is a continuous non-decreasing function satisfying $g(t)>0$ for $t>0$. In this case, conclusion (b) takes the form:\smallskip

(b') $g(u) \in L^{1}(\mm^{m}_{\s} \setminus B_{R}(0)) \cap L^{\infty}(\mm^{m}_{\s} \setminus B_{R}(0))$.
\end{remark}
\begin{proof}
We begin by noting that, by elliptic regularity, $u$ is $C^{2}$ on the open set $\{ \nabla u \not= 0\}$. Moreover, since $u$ is a radial distributional solution of \eqref{p-eq-radial} then, in particular, for every $\vp \in \mathrm{Lip}_{c}((R,+\infty))$ it holds
\[
-\int_{R}^{+\infty} |u'|^{p-2}u' \vp' \s^{m-1} = \l \int_{R}^{+\infty} u^{p-1} \vp \s^{m-1},
\]
which, on the set  $\{u' \not=0\}$, takes the pointwise form
\[
\left( \s^{m-1}|u'|^{p-2}u' \right)' = \l \s^{m-1}u^{p-1}.
\]
We are now ready to start the proof of the Lemma.\smallskip

(a) We  have to show that the open set
\[
D_{+} = \{ u' > 0\}
\]
is empty. Suppose by contradiction that there exists $r_{0} \in D_{+} \not= \emptyset$. Obviously, up to moving $r_{0}$ a little bit, we can assume that $u(r_{0})>0$. Since, on $D_{+}$,
\begin{equation}\label{odeD+}
\left( \s^{m-1}(u')^{p-1} \right)' = \l \s^{m-1}u^{p-1} \geq 0
\end{equation}
it follows that the function $\s^{m-1}(u')^{p-1}$ is  nondecreasing  on  the maximal interval $[r_{0},r_{1}) \subseteq D_{+}$. This implies that
\[
u'(r) \geq \s^{\frac{1-m}{p-1}}(r)\s^{\frac{m-1}{p-1}}(r_{0}) u'(r_{0}) >0, \text{ on } [r_{0},r_{1})
\]
and, therefore, $r_{1} = +\infty$. Integrating \eqref{odeD+}  from $r_{0}$ to any $r>r_{0}$ gives
\begin{align*}
\s^{m-1}(r)(u')^{p-1}(r) &\geq \s^{m-1}(r)(u')^{p-1}(r) - \s^{m-1}(r_{0})(u')^{p-1}(r_{0}) \\
&= \l \int_{r_{0}}^{r} u^{p-1}(t) \s^{m-1}(t)dt\\
&\geq \l u^{p-1}(r_{0}) \int_{r_{0}}^{r} \s^{m-1}(t) dt.
\end{align*}
Thus,  there exists a constant $C=C(r_{0},u,\s)>0$ such that
\[
u'(r) \geq C \left( \frac{\int_{r_{0}}^{r} \s^{m-1}(t)dt}{\s^{m-1}(r)} \right)^{\frac{1}{p-1}}.
\]
Integrating this latter on $[r_{0},+\infty)$ and recalling that, by the $p$-stochastic completeness of $\mm^{m}_{\s}$, condition \eqref{pstoch-models} is satisfied, we conclude that
\[
u(r) \to +\infty \text{ as }r \to +\infty.
\]
This contradicts the assumption that $u$ is bounded and completes the proof of (a).\smallskip

(b) Recall that, by (a), $u' \leq 0$. In particular, for every $\vp \in \mathrm{Lip}_{c}((R,+\infty))$,
\[
\int_{R}^{+\infty} (-u')^{p-1} \vp' \s^{m-1} = \l \int_{R}^{+\infty} u^{p-1} \vp \s^{m-1}
\]
For every $r > R+2$, we choose $\vp= \vp_{r}$ to be piecewise linear and  such that $0 \leq \vp \leq 1$, $\mathrm{supp}(\vp) \subset [R+1, r+1]$ and $\vp = 1$ on $[R+2,r]$. We thus obtain
\begin{align*}
\int_{R+1}^{R+2}  (-u')^{p-1}  \s^{m-1} &\geq \int_{R+1}^{R+2}  (-u')^{p-1}  \s^{m-1} - \int_{r}^{r+1} (-u')^{p-1} \s^{m-1}\\
&= \l \int_{R+1}^{r+1}u^{p-1}\vp \s^{m-1}\\
&\geq \l \int_{R+2}^{r}  u^{p-1}\s^{m-1}.
\end{align*}
Letting $r \to +\infty$ we deduce that $u^{p-1} \s^{m-1} \in L^{1}([R+2,+\infty))$ and this latter, clearly, is equivalent to  $u \in L^{p-1}(\mm^{m}_{\s}\setminus B_{R+2}(0))$. By interpolation, since $u$ is bounded, we conclude the $L^{q}$ integrability of $u$ for every $q \in [p-1,+\infty]$.
\end{proof}

We are now in the position to give the
\begin{proof}[Proof of Theorem \ref{th-Lpmodel}]
 Clearly, up to rescaling $u$, we can  assume that
 \[
 u \leq 1,\text{ on }\partial B_{R}(0).
 \]
 Let $u^{\ast} = \sup_{\mm^{m}_{\s} \setminus B_{R}(0)} u < +\infty$ and, for every $R+n> R$,  define
 \[
 M_{n} = \max_{\partial B_{R+n}(0)} u \leq u^{\ast}.
 \]
As in the proof of Theorem \ref{th-basic} (a), we consider the (energy-minimizing) solution $v_{n} \in C^{1,\a}(\bar B_{R+n}(0) \setminus  B_{R}(0))$ of the boundary value problem
 \begin{equation}\label{boundary-r}
\begin{cases}
 \Delta_{p} v_{n} = \l |v_{r}|^{p-2}v_{n},& B_{R+n}(0) \setminus \bar B_{R}(0)\\
 v_{n} = 1, & \partial B_{R}(0)\\
 v_{n} = M_{n} & \partial B_{R+n}(0).
\end{cases}
 \end{equation}
By the arguments in Lemma \ref{lemma-symmetric}, $v_{n}$ is radial and, by the comparison principle,
\[
 0 \leq u \leq v_{n} \leq \max\{ 1,M_{n} \} \leq \max\{1,u^{\ast}\},\, \text{ on }\bar B_{R+n}(0) \setminus B_{R}(0).
\]
In particular the sequence $\{ v_{n} \}$ is uniformly bounded. Therefore, using Lieberman's Schauder estimates up to the boundary, we see that a subsequence $\{ v_{n'}  \}$ converges to a radial $C^{1}$ solution $v(x) = v(r(x))$ of the problem
\[
\begin{cases}
 \Delta_{p} v = \l v^{p-1} & \mm^{m}_{\s} \setminus \bar B_{R} \\
 v =1 & \partial B_{R}(0).
\end{cases}
\]
Note that, by construction,
\[
0 \leq u \leq v \leq u^{\ast}.
\]
Therefore, an application of Lemma \ref{lemma-radialsolution} yields that $v$, and hence $u$, are in $L^{q}$ for every $q \in [p-1,+\infty]$. This completes the proof of the Theorem.
\end{proof}

As a consequence of Theorem \ref{th-Lpmodel} let us show how to obtain Theorems \ref{th-main1} and \ref{th-main2}. Recall that a complete model manifold $\mm^{m}_{\s}$ satisfying $\ric \geq - \kappa^{2}$ is both $p$-stochastically complete and $p$-Feller.

\begin{theorem}\label{th-main1a}
 Let $\mm^{m}_{\s}$ be a $p$-stochastically complete and $p$-Feller model manifold,  $1<p<+\infty$. Let $\O \Subset \mm^{m}_{\s}$ be a smooth domain and let $u \in C^{0}(\mm^{m}_{\s} \setminus \O) \cap W^{1,p}_{loc}(\mm^{m}_{\s} \setminus \bar \O)$ be a nonnegative and bounded function satisfying
\[
 \Delta_{p} u \geq \Lambda(u)\, \text{on } \mm^{m}_{\s} \setminus \bar \O
\]
where $\Lambda : [0,+\infty) \to [0,+\infty)$ is a non-decreasing function such that
\[
(i)\, \Lambda\left(  0\right)  =0,\,\,\, (ii) \, \Lambda\left(  t\right) >0\text{, }\forall t>0, \,\,\, (iii) \liminf_{t\rightarrow0+}\frac {\Lambda\left(  t\right)  }{t^{\xi}}>0,
\]
for some $0\leq\xi\leq p-1$. Then
 \[
 u(x) \to 0\, \text{as }x\to \infty.
 \]

\end{theorem}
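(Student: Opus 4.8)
The plan is to reduce the differential inequality with the general nonlinearity $\Lambda$ to the pure power inequality $\Delta_{p} u \geq \l u^{p-1}$ already analyzed in Theorems \ref{th-Lpmodel} and \ref{th-lp-comparison}, and then to read off the decay directly from the $p$-Feller property. Concretely, I would first produce a constant $\l>0$ for which $u$ is a weak subsolution of $\Delta_{p} w \geq \l w^{p-1}$ on the relevant exterior domain; then invoke Theorem \ref{th-Lpmodel} (which uses $p$-stochastic completeness) to gain $L^{p}$-integrability of $u$ at infinity; and finally feed this into the $L^{p}$-comparison of Theorem \ref{th-lp-comparison}, whose conclusion (iii) gives $u(x)\to 0$ precisely because $\mm^{m}_{\s}$ is $p$-Feller.

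The genuinely structural step is the passage from $\Lambda$ to $\l t^{p-1}$. Let $u^{\ast}=\sup u<+\infty$ (the case $u^{\ast}=0$ being trivial) and consider $\psi(t)=\Lambda(t)/t^{p-1}$ on $(0,u^{\ast}]$. By condition $(iii)$ there are $c>0$ and $\d_{0}>0$ with $\Lambda(t)\geq c\,t^{\xi}$ for $0<t\leq\d_{0}$, so $\psi(t)\geq c\,t^{\xi-(p-1)}\geq c\,\d_{0}^{\,\xi-(p-1)}$ on $(0,\d_{0}]$, the last inequality using that $\xi\leq p-1$ makes the exponent nonpositive. On the complementary range $[\d_{0},u^{\ast}]$ the monotonicity of $\Lambda$ together with $(ii)$ gives $\Lambda(t)\geq\Lambda(\d_{0})>0$, hence $\psi(t)\geq\Lambda(\d_{0})/(u^{\ast})^{p-1}>0$. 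Taking $\l$ to be the minimum of the two positive lower bounds yields $\Lambda(t)\geq\l\,t^{p-1}$ for every $t\in[0,u^{\ast}]$. Since $0\leq u\leq u^{\ast}$ almost everywhere, testing the weak inequality $\Delta_{p}u\geq\Lambda(u)$ against a nonnegative $\vp\in C^{\infty}_{c}$ and using $\Lambda(u)\geq\l u^{p-1}$ pointwise shows that $u$ is a weak subsolution of $\Delta_{p}u\geq\l u^{p-1}$ on $\mm^{m}_{\s}\setminus\bar\O$.

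It then remains to fit $\O$ inside a coordinate ball and transfer the hypotheses. I would pick $R>0$ so large that $\O\Subset B_{R}(o)$; since $o$ is a pole, $B_{R}(o)$ is a smooth, relatively compact domain, and $u\in C^{0}(\mm^{m}_{\s}\setminus B_{R}(o))\cap W^{1,p}_{loc}(\mm^{m}_{\s}\setminus\bar B_{R}(o))$ still satisfies $\Delta_{p}u\geq\l u^{p-1}$ there. Theorem \ref{th-Lpmodel}, applied on $\mm^{m}_{\s}\setminus\bar B_{R}(o)$, then gives $u\in L^{q}$ for every $q\in[p-1,+\infty]$, in particular $u\in L^{p}(\mm^{m}_{\s}\setminus\bar B_{R}(o))$. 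With this integrability in hand, Theorem \ref{th-lp-comparison}, read with $B_{R}(o)$ in the role of $\O$, applies; its conclusion (iii) furnishes $u(x)\to 0$ as $x\to\infty$ because $\mm^{m}_{\s}$ is $p$-Feller.

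I expect the only delicate point to be the reduction in the second paragraph: everything hinges on the constraint $\xi\leq p-1$, which forces $t^{\xi}$ to dominate $t^{p-1}$ near the origin and so permits minorizing $\Lambda$ by a genuine power nonlinearity of the type controlled by the comparison machinery. The remaining steps are essentially bookkeeping, and it is worth noting that the rotational symmetry of $\mm^{m}_{\s}$ enters only through Theorems \ref{th-Lpmodel} and \ref{th-lp-comparison} (ultimately through the radial solution of Lemma \ref{lemma-radialsolution}), not through the reduction itself.
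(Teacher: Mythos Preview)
Your proposal is correct and follows essentially the same route as the paper: reduce $\Lambda(u)$ to $\l u^{p-1}$ on the range $[0,u^{\ast}]$ via the structural conditions on $\Lambda$, then apply Theorem \ref{th-Lpmodel} (using $p$-stochastic completeness) to place $u$ in $L^{p}$ on an exterior ball, and finish with Theorem \ref{th-lp-comparison}\,(iii) (using the $p$-Feller property). Your reduction argument is in fact slightly cleaner than the paper's, which tacitly assumes $t^{\ast}<1$ to get $u^{\xi}\geq u^{p-1}$ on the small-$u$ range, whereas your use of $c\,\d_{0}^{\,\xi-(p-1)}$ works uniformly.
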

\begin{proof}
Note that, since $0 \leq u(x) \leq u^{\ast}<+\infty$, the structural conditions on $\Lambda$ imply that
 \[
 \Delta_{p} u \geq \l  u^{p-1}\, \text{on } \mm^{m}_{\s} \setminus \bar \O
 \]
 for a suitable constant $\l >0$. Indeed, by (iii), up to choosing $0<t^{\ast}\ll 1$ we have $\Lambda(t) \geq C t^{\xi}$ on $[0,t^{\ast}]$ for some constant $C>0$. Therefore, if $u \geq t^{\ast}$, since  $\Lambda$ is non-decreasing, we get
 \[
 \Lambda(u) \geq \Lambda(t^{\ast}) = \frac{\Lambda(t^{\ast}) }{(u^{\ast})^{p-1}} (u^{\ast})^{p-1} \geq  \frac{\Lambda(t^{\ast})}{(u^{\ast})^{p-1}}{u^{p-1}}.
 \]
 On the other hand, if $u < t^{\ast} <1$,
 \[
 \Lambda(u) \geq C u^{\xi} \geq C u^{p-1}
 \]
Therefore, we can take $\l = \min(C,  \Lambda(t^{\ast})/ (u^{\ast})^{p-1})$.\smallskip

 Since $\mm^{m}_{\s}$ is $p$-stochastically complete, by Theorem \ref{th-Lpmodel} we know that $u \in L^{p}(\mm^{m}_{\s} \setminus B_{R}(0))$. Therefore, using the fact that $\mm^{m}_{\s}$ is $p$-Feller, we can apply Theorem \ref{th-lp-comparison} iii) and conclude that $u(x) \to 0$ as $x \to \infty$.
\end{proof}

\begin{theorem}\label{th-main2a}
 The model manifold  $\mm^{m}_{\s}$ is $p$-stochastically complete if and only if the global Sobolev regularity property $(\SR_{p})$ is satisfied.
\end{theorem}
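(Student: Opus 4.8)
The plan is to read the stated equivalence as two separate implications and to observe that only the forward one is genuinely new, the reverse being a special case of a result already proved for arbitrary complete manifolds. For the implication $(\SR_{p}) \Rightarrow p\text{-stochastic completeness}$ I would simply invoke Proposition \ref{th-characterization}(b): a complete model manifold is in particular geodesically complete, so the hypotheses of that Proposition are met, and the validity of $(\SR_{p})$ forces $p$-stochastic completeness with no use whatsoever of the rotational symmetry. Nothing further is required in this direction.

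The content lies in the converse, $p\text{-stochastic completeness} \Rightarrow (\SR_{p})$, where the symmetry enters through Theorem \ref{th-Lpmodel}. Given the data $\O_{0},\l,u$ appearing in the definition of $(\SR_{p})$ --- with $u\geq 0$ bounded, $u\in W^{1,p}_{loc}(\mm^{m}_{\s}\setminus\bar\O_{0})\cap C^{0}(\mm^{m}_{\s}\setminus\O_{0})$ and $\Delta_{p}u\geq\l u^{p-1}$ on $\mm^{m}_{\s}\setminus\bar\O_{0}$ --- the first step is to reduce the general domain $\O_{0}$ to an exterior ball, since Theorem \ref{th-Lpmodel} is phrased for complements of balls centered at the pole. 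Because $\O_{0}$ is relatively compact I can choose $R>0$ with $\bar\O_{0}\subset B_{R}(0)$; restricting $u$ to the smaller exterior domain $\mm^{m}_{\s}\setminus\bar B_{R}(0)$ preserves all the structural hypotheses, so Theorem \ref{th-Lpmodel} applies and yields $u\in L^{q}(\mm^{m}_{\s}\setminus\bar B_{R}(0))$ for every $q\in[p-1,+\infty]$, in particular $u\in L^{p}(\mm^{m}_{\s}\setminus\bar B_{R}(0))$.

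The second step is to upgrade this $L^{p}$ bound on $u$ to an $L^{p}$ bound on $|\nabla u|$ near infinity via the Caccioppoli-type estimate of Lemma \ref{lemma-w1p-sub}. Taking $\O=B_{R}(0)$ there, the integrability $u\in L^{p}(\mm^{m}_{\s}\setminus B_{R}(0))$ propagates to $|\nabla u|\in L^{p}(\mm^{m}_{\s}\setminus (B_{R}(0))_{\e})=L^{p}(\mm^{m}_{\s}\setminus B_{R+\e}(0))$ for any $\e>0$. Choosing $\O_{1}=B_{R'}(0)$ with $R'>R+\e$ --- geodesic balls centered at the pole being smooth domains of the model --- I obtain $\O_{0}\subseteq\O_{1}\Subset\mm^{m}_{\s}$ together with $u\in W^{1,p}(\mm^{m}_{\s}\setminus\bar\O_{1})$, which is precisely the conclusion demanded by $(\SR_{p})$.

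Since Theorem \ref{th-Lpmodel} and Lemma \ref{lemma-w1p-sub} are already at our disposal, the argument is essentially a bookkeeping assembly and I do not expect a serious analytic obstacle. The only points needing care are the matching of the general domain $\O_{0}$ with the exterior-ball hypothesis of Theorem \ref{th-Lpmodel}, handled by the sandwiching $\bar\O_{0}\subset B_{R}(0)$, and the unavoidable $\e$-collar lost in Lemma \ref{lemma-w1p-sub}, which forces $\O_{1}$ to strictly contain $\O_{0}$ in general. This is consistent with the earlier remarks: for $p\neq 2$ one controls $\nabla u$ only at infinity rather than up to $\partial\O_{0}$, so the borderline value $\e=0$ is not expected to hold in this nonlinear, rotationally symmetric setting.
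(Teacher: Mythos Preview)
Your proposal is correct and follows essentially the same route as the paper: invoke Proposition~\ref{th-characterization}(b) for the implication $(\SR_{p})\Rightarrow p$-stochastic completeness, and for the converse enclose $\O_{0}$ in a ball $B_{R}(0)$, apply Theorem~\ref{th-Lpmodel} to get $u\in L^{p}$, then pass to $u\in W^{1,p}$ on a slightly smaller exterior domain. The only cosmetic difference is that the paper cites Theorem~\ref{th-lp-comparison}(ii) for this last step, whereas you invoke the underlying Caccioppoli estimate Lemma~\ref{lemma-w1p-sub} directly; the content is the same.
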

\begin{proof}
By Proposition \ref{th-characterization} we have only to show that if $\mm^{m}_{\s}$ is $p$-stochastically complete then $(\SR_{p})$ holds true. Thus, fix any  $\O_{0} \Subset \mm^{m}_{\s}$, a real number $\l>0$ and a positive and bounded function  $u \in W^{1,p}_{loc}(\mm^{m}_{\s} \setminus \bar \O_{0}) \cap C^{0}(\mm^{m}_{\s} \setminus \O_{0})$ satisfying the inequality $\Delta_{p} u \geq \l u^{p-1}$ on $\mm^{m}_{\s} \setminus \bar \O_{0}$.  Next, choose $R \gg1$ in such a way that $\O_0 \Subset B_{R}(0)$. According to Theorem \ref{th-Lpmodel}, $u$ is $L^{p}$ on $\mm^{m}_{\s} \setminus B_{R}(0)$. It follows from Theorem \ref{th-lp-comparison} that $u \in W^{1,p}$ globally on $\mm^{m}_{\s} \setminus \bar B_{R}(0)$ thus proving the validity of $(\SR_{p})$.
\end{proof}


\section{Compact support property}\label{section-compact}

In this final section, as an application of the theory  so far developed, we prove the compact support property stated in Theorem \ref{th-main3}. Actually, as we already did for Theorem \ref{th-main1}, we are going to prove a more general result.

\begin{theorem}\label{th-main3a}
 Let $\mm^{m}_{\s}$ be a $p$-stochastically complete and $p$-Feller model manifold, whose warping function satisfies
\begin{equation}\label{cs-laplacian}
\inf_{[0,+\infty)}\frac{\s'}{\s} > - \infty.
\end{equation}
Let $u \in C^{1}(\mm^{m}_{\s}\setminus \bar \O)$ be a nonnegative and bounded solution of
\[
 \Delta_{p} u \geq \Lambda(u)\, \text{on } \mm^{m}_{\s} \setminus \bar \O
\]
where $\Lambda : [0,+\infty) \to [0,+\infty)$ is a non-decreasing function satisfying
\[
(i)\, \Lambda\left(  0\right)  =0,\,\,\, (ii) \, \Lambda\left(  t\right) >0\text{, }\forall t>0, \,\,\, (iii) \liminf_{t\rightarrow0+}\frac {\Lambda\left(  t\right)  }{t^{\xi}}>0,
\]
for some $0\leq\xi< p-1$. Then $u$ has compact support.
\end{theorem}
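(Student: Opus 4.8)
The plan is to promote the decay already guaranteed by Theorem~\ref{th-main1a} into genuine compact support, by comparing $u$ with an explicit, compactly supported radial supersolution whose existence rests on the subcriticality $\xi<p-1$ and on the drift bound \eqref{cs-laplacian}. First I would apply Theorem~\ref{th-main1a}: since $\mm^{m}_{\s}$ is $p$-stochastically complete and $p$-Feller and $\xi\le p-1$, we have $u(x)\to 0$ as $x\to\infty$. Let $c>0$ and $t^{\ast}\in(0,1]$ be the constants furnished by condition (iii), so that $\Lambda(t)\ge c\,t^{\xi}$ for $t\in(0,t^{\ast}]$, and pick a constant $b\ge 0$ with $-\s'/\s\le b$ on $[0,+\infty)$, which exists by \eqref{cs-laplacian}. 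Using the decay, fix $R_{1}$ so large that $\O\Subset B_{R_{1}}(0)$ and $\sup_{\mm^{m}_{\s}\setminus B_{R_{1}}(0)}u\le\e$, where $\e\in(0,t^{\ast}]$ is a small parameter to be chosen.

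The core of the argument is the construction of the barrier. For a non-increasing radial $w$ with $\phi=-w'\ge 0$ one has $\Delta_{p}w=-\frac{d}{dr}\phi^{p-1}-(m-1)\frac{\s'}{\s}\phi^{p-1}$, so by the choice of $b$ the drift is controlled by $-(m-1)\frac{\s'}{\s}\phi^{p-1}\le (m-1)b\,\phi^{p-1}$. I would then try the prototype profile
\[
w(r)=A\,(R_{2}-r)_{+}^{\gamma},\qquad \gamma=\frac{p}{p-1-\xi},\qquad R_{2}=R_{1}+\delta,
\]
extended by $0$ for $r\ge R_{2}$. The exponent $\gamma$ is dictated by matching the leading term $-\frac{d}{dr}\phi^{p-1}$ with the power nonlinearity $w^{\xi}$; since $\xi<p-1$ we get $\gamma>1$, so $w\in C^{1}$ with $w'(R_{2})=0$, and the zero-extension is a distributional supersolution across $r=R_{2}$ because $\Lambda(0)=0$. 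A direct computation, using $\Lambda(w)\ge c\,w^{\xi}$ (legitimate as $0\le w\le\e\le t^{\ast}$) and dividing the target inequality $\Delta_{p}w\le\Lambda(w)$ by $(R_{2}-r)^{\gamma\xi}$, reduces everything to the algebraic condition $A^{p-1-\xi}\gamma^{p-1}\big[(\gamma\xi+1)+(m-1)b\,\delta\big]\le c$. Fixing $\delta$ and setting $A=\e\,\delta^{-\gamma}$ forces $w(R_{1})=\e$ and makes the left-hand side proportional to $\e^{p-1-\xi}$, which tends to $0$ as $\e\downarrow 0$; hence for $\e$ small enough (equivalently $R_{1}$ large enough) $w$ is a supersolution of $\Delta_{p}w\le\Lambda(w)$ on $\mm^{m}_{\s}\setminus\bar B_{R_{1}}(0)$ with $w\ge u$ on $\partial B_{R_{1}}(0)$ and $w\equiv 0$ outside $B_{R_{2}}(0)$.

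It remains to run a comparison on the unbounded exterior $D=\mm^{m}_{\s}\setminus\bar B_{R_{1}}(0)$. Here $\Delta_{p}u-\Lambda(u)\ge 0\ge\Delta_{p}w-\Lambda(w)$ and $u\le w$ on $\partial D$, and since $\Lambda$ is non-decreasing the proof of Proposition~\ref{prop-w1p-comparison} applies essentially verbatim, the term $\l|u|^{p-2}u$ being replaced by $\Lambda(u)$ whose monotonicity still gives $(\Lambda(u)-\Lambda(w))(u-w)\ge 0$ on $\{u>w\}$. That comparison requires $u,w\in W^{1,p}(D)$: the barrier $w$ is compactly supported, while the global Sobolev regularity of $u$ on an exterior domain is exactly the content of Theorem~\ref{th-main2a} (take $R_{1}$ beyond the domain $\O_{1}$ provided by $(\SR_{p})$, which is where $p$-stochastic completeness re-enters). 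We conclude $u\le w$ on $D$, whence $u\equiv 0$ on $\mm^{m}_{\s}\setminus B_{R_{2}}(0)$, i.e. $u$ has compact support.

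I expect the barrier construction to be the main obstacle. One must make the explicit power profile a supersolution of the \emph{full} radial operator, whose first-order drift is controlled only by the one-sided bound \eqref{cs-laplacian}, while simultaneously matching the boundary value $w(R_{1})\ge\sup_{\partial B_{R_{1}}}u$ and keeping $w\le t^{\ast}$ so that $\Lambda(w)\ge c\,w^{\xi}$ remains valid; the scaling $A=\e\,\delta^{-\gamma}$ with $\e\downarrow0$ is precisely what reconciles these competing demands. A secondary but genuine subtlety is that the comparison cannot be carried out on the bounded annulus $B_{R_{2}}(0)\setminus\bar B_{R_{1}}(0)$, since there $w=0\le u$ on the outer boundary; this forces the unbounded-domain comparison and hence the use of the global $W^{1,p}$ regularity of $u$.
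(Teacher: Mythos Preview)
Your argument is correct, but it takes a different route from the paper. The paper's proof is a two--liner: it applies Theorem~\ref{th-main1a} to get $u(x)\to 0$, records that $\Lambda(t)\ge Ct^{\xi}$ near $0$ with $t^{-(\xi+1)/p}\in L^{1}(0+)$ and that $\Delta r=(m-1)\sigma'/\sigma$ is bounded below by \eqref{cs-laplacian}, and then simply invokes the compact support principle of Pucci--Rigoli--Serrin \cite[Theorem~1.1]{PuRiSe-jde}. Your proposal instead reconstructs that principle from scratch, building the explicit dead--core barrier $w(r)=A(R_{2}-r)_{+}^{\gamma}$ and running a comparison. The barrier computation and the adaptation of Proposition~\ref{prop-w1p-comparison} with $\Lambda$ in place of $\lambda t^{p-1}$ are both fine.

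The one point worth flagging is your claim that the comparison \emph{must} be done on the unbounded exterior, thereby forcing the use of $(\SR_{p})$ via Theorem~\ref{th-main2a}. In fact this is avoidable: for any $\eta>0$ the shifted barrier $w_{\eta}=w+\eta$ is still a $C^{1}$ supersolution of $\Delta_{p}w_{\eta}\le\Lambda(w_{\eta})$ (by monotonicity of $\Lambda$ where $w>0$, and because $\Lambda(\eta)>0$ where $w=0$), and since $u\to 0$ one can find $R_{\eta}>R_{2}$ with $u\le\eta=w_{\eta}$ on $\partial B_{R_{\eta}}(0)$; the bounded--annulus comparison (Remark~\ref{rmk-w1p-comparison}) then gives $u\le w_{\eta}$, and letting $\eta\downarrow 0$ yields $u\le w$. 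This is closer in spirit to \cite{PuRiSe-jde} and sidesteps global $W^{1,p}$ regularity entirely. So your approach is self--contained and showcases the $(\SR_{p})$ machinery developed in the paper, whereas the paper's proof (and the $\eta$--shift variant) are shorter and use only the decay plus the drift bound.
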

\begin{remark}
 As it is pointed out in \cite{PuRiSe-jde} and  \cite[Example 1.35]{BMPR}, condition \eqref{cs-laplacian} cannot be avoided even in the linear setting. Note that, in that example, the model manifold is both stochastically complete and Feller for the Laplace-Beltrami operator.
\end{remark}
\begin{proof}
By Theorem \ref{th-main1a}, $u(x) \to 0$ as $x \to \infty$. Therefore, on noting that $\Lambda(t) \geq C t^{\xi}$  for all $0 \leq t <1$ with $t^{-\frac{\xi+1}{p}} \in L^{1}(0+)$, and recalling that $\Delta r = (m-1)\s'/\s$, we can apply \cite[Theorem 1.1]{PuRiSe-jde} to conclude that $u$ has compact support.
\end{proof}


\end{document}